\newtheorem{theorem}{Theorem}[section]
\newtheorem{lemma}[theorem]{Lemma}
\newtheorem{corollary}[theorem]{Corollary}
\newtheorem{proposition}[theorem]{Proposition}
\theoremstyle{remark}
\newtheorem{definition}[theorem]{Definition}
\newtheorem{rem}[theorem]{Remark}
\newtheorem{assumption}[theorem]{Assumption}
\newtheorem*{notation}{Notations}
\newcommand{\R}{\mathbb{R}}
\newcommand{\N}{\mathbb{N}}
\newcommand{\1}{\mathbf{1}}
\newcommand{\E}{\mathbb{E}}
\newcommand{\PP}{\mathbb{P}}
\newcommand{\e}{\mathrm{e}}
\newcommand{\dd}{\mathrm{d}}
\newcommand{\pb}{\mathcal{P}}
\newcommand{\pdf}{\mathcal{P}^\mathrm{ac}}
\begin{document}

\begin{frontmatter}
\title{Collective Proposal Distributions for Nonlinear MCMC samplers: Mean-Field Theory and Fast Implementation}

%\title{A sample article title with some additional note\thanksref{t1}}
\runtitle{Collective Proposal Distributions for Nonlinear MCMC samplers}
%\thankstext{T1}{A sample additional note to the title.}

\begin{aug}
\author[A]{\fnms{Grégoire} \snm{Clarté}\ead[label=e1]{gregoire.clarte@helsinki.fi}},
\author[B,C]{\fnms{Antoine} \snm{Diez}\ead[label=e2,mark]{diez.antoinenicolas.4e@kyoto-u.ac.jp}}
\and
\author[D,E]{\fnms{Jean} \snm{Feydy}\ead[label=e3,mark]{jean.feydy@inria.fr}}
%%%%%%%%%%%%%%%%%%%%%%%%%%%%%%%%%%%%%%%%%%%%%%
%% Addresses                                %%
%%%%%%%%%%%%%%%%%%%%%%%%%%%%%%%%%%%%%%%%%%%%%%
\address[A]{Department of Computer Science, University of Helsinki, FCAI,
\printead{e1}}

\address[B]{Department of Mathematics, Imperial College London, UK}

\address[C]{Institute for the Advanced Study of Human Biology (ASHBi), Kyoto University, Japan
\printead{e2}}

\address[D]{HeKA team, Inria Paris, F-75012 Paris, France,
\printead{e3}}

\address[E]{Inserm, Centre de Recherche des Cordeliers, Sorbonne Université, Université Paris Cité, F-75006 Paris, France}
\end{aug}

\begin{abstract}
Over the last decades, various ``non-linear'' MCMC methods have arisen. While appealing for their convergence speed and efficiency, their practical implementation and theoretical study remain challenging. In this paper, we introduce a non-linear generalization of the Metropolis-Hastings algorithm to a proposal that depends not only on the current state, but also on its \emph{law}. We propose to simulate this dynamics as the mean field limit of a system of interacting particles, that can in turn itself be understood as a generalisation of the Metropolis-Hastings algorithm to a population of particles. Under the double limit in number of iterations and number of particles we prove that this algorithm converges. Then, we propose an efficient GPU implementation and illustrate its performance on various examples. The method is particularly stable on multimodal examples and converges faster than the classical methods. 
\end{abstract}

\begin{keyword}[class=MSC]
\kwd[Primary ]{65C05, }{65C10, }{65C35, }
\kwd[; secondary ]{60J05, }{62-08, }{62-04}
\end{keyword}

\begin{keyword}
\kwd{sampling algorithm}
\kwd{particle method}
\kwd{propagation of chaos}
\kwd{entropy methods}
\kwd{GPU}
\end{keyword}

\end{frontmatter}

\section{Introduction}

\subsection{Background}

Monte Carlo methods are designed to estimate the integral of a function of interest $\varphi$ (sometimes called \emph{observable}) under a probability measure $\pi$.
The integral $\int \varphi(x)\pi(\dd x)$ is approximated by a random estimator of the form
\[\frac{1}{n}\sum_{i=1}^n \varphi(X_i)~, \]
where the $X_i$ are $n$ independent and identically distributed (i.i.d.) random variables with law~$\pi$. The law of large numbers ensures the convergence as $n\to+\infty$ of this estimator. 
For complex cases, when i.i.d. $\pi$-distributed random variables cannot be generated exactly, a now classical procedure consists in constructing a Markov chain $(X_i)_i$ with stationary distribution $\pi$. Ergodic theory results then ensure that the estimator above still converges, even though the $X_i$ are not independent from each other.
The Metropolis-Hastings algorithm \cite{metropolis1949monte,metropolis1953equation,hastings1970monte} provides a simple construction for such a Markov chain that only requires to evaluate the probability density function $\pi$ up to a multiplicative constant. The constructed chain is a random walk biased by an accept-reject step. Its convergence properties have been thoroughly studied, for example in \cite{mengersen1996rates}.

This well-known procedure has become a building block for more advanced samplers, that are designed to overcome the known flaws of the Metropolis-Hastings algorithm: slow convergence, bad mixing properties for multimodal distributions \emph{etc.} Such extensions include for instance the Wang and Landau algorithm \cite{bornn2013adaptive}, regional MCMC algorithms \cite{craiu2009learn}, or non Markovian (adaptive) versions \cite{haario2001adaptive,haario1999adaptive,atchade2005,atchade2011adaptive} where the next proposed state depends on the whole history of the process. The more recent PDMP samplers \cite{fearnhead2018piecewise,vanetti2017piecewise} provide an alternative to the discrete time accept-reject scheme, replacing it by a continuous time non reversible Markov process with random jumps. Finally, more complex algorithms are based on the evaluation of the gradient of $\pi$, see for instance the Metropolis-adjusted Langevin \cite{besag1994mala} and the Hamiltonian Monte Carlo algorithms \cite{1987hmc}. All these methods can be seen as ``linear" as the next position only depends on the current position. 
% the construction of non-linear Metropolis-Hastings method, where the next position depends on the \emph{law} of the current position, is rather marginal, with the work \cite{andrieu2011nonlinear}.

A non-Markovian alternative to Metropolis-Hastings like methods is given by Importance Sampling algorithms. By drawing i.i.d. samples from an auxiliary distribution $q$, which is usually simple and called the importance distribution, we can build an estimator using the following identity:
\[\int \varphi(x)\pi(x)\dd x = \int\frac{\pi(x)}{q(x)} \varphi(x)q(x)\dd x \simeq \frac{1}{n}\sum_{i=1}^n w_i \varphi(X_i),\]
where the $X_i$ are i.i.d. with law $q$ and the $w_i\propto \pi(X_i)/q(X_i)$, are called the \emph{importance weights}. 
The choice of $q$ is critical, as bad choices can lead to a degeneracy of the importance weights. Iterative methods have been developed to sequentially update the choice of the importance distribution, and to update the $X_i$ now interpreted as particles that evolve along iterations. Among these algorithms, we can cite the Sequential Importance Sampling algorithm \cite{gordon1993novel}, the Population Monte Carlo (PMC) methods \cite{douc2007,cappe2004population,cappe2008adaptive} or the recent Safe Adaptive Importance Sampling (SAIS) algorithm \cite{delyon2019adaptive}. This paradigm is in particular well-suited to the study of filtering problems \cite{gordon1993novel}, leading to the development of Sequential Monte Carlo (SMC) methods \cite{del2006sequential,doucet2013sequential}. A review of population-based algorithms and of the SMC method can be found in \cite{jasra2007population}.

The SMC methodology has recently been used to design and study \emph{nonlinear} MCMC algorithms \cite{andrieu2011nonlinear}. This framework can be seen as a generalisation of some non-Markovian extensions of the Metropolis-Hastings algorithm (such as the ``resampling from the past'' procedure \cite{haario2001adaptive,atchade2005}) but also allows the use of a wider range of algorithmic techniques. Examples are given in \cite{andrieu2011nonlinear,andrieu2007nonlinear} and are often based on the simulation of auxiliary chains. In the present article, we show that an alternative procedure based on the simulation of a swarm of \emph{interacting} particles can also be used to approximate a nonlinear Markov chain.
This provides a multi-particle generalisation of the Metropolis-Hastings procedure. 

The duality between particle systems and non-linear Markov processes has first been underlined in statistical physics; on the mathematical side, it has been the subject of the pioneering works of \cite{mckean1966class,mckean1967propagation,kac1956foundations,dobrushin1979vlasov}. A key result is the \emph{propagation of chaos} property formalised by \cite{kac1956foundations}, which implies that under an initial \emph{chaoticity} assumption on the law of the particles, the empirical measure of the system at any further time converges towards a deterministic limit; this type of limit is called \emph{mean-field limit}. In a continuous time framework, this limit classically satisfies a nonlinear Partial Differential Equation (PDE)  \cite{sznitman1991topics,meleard1996asymptotic}. The original diffusion framework has been extended to jump and jump-diffusion processes in \cite{graham1992mckean,graham1992nonlinear}. We refer the interested reader to \cite{bellomo2017active,bellomo2019active} for recent reviews and surveys of the applications of such models. We also mention that this methodology has been used in the analysis of particle methods in filtering problems \cite{Del_Moral_2013,andrieu2010particle}.

\subsection{Objective and methods}

Let $\pi$ be a target measure on $E \subset \R^d$, known up to a multiplicative constant and which is assumed to have a density with respect to the Lebesgue measure. We denote $\pb(E)$ the set of probability measures on $E$.
The goal of the present article is to build a \emph{nonlinear} Markov chain $(\overline{X}_t)_t$ on $E$ that samples $\pi$ efficiently.
Given a sample $\overline{X}_t$ at iteration $t$,
we draw $\overline{X}_{t+1}$ according to
\[\overline{X}_{t+1}\sim K_{\mu_t}(\overline{X}_t,\dd y)~,\]
where the \emph{transition kernel} is defined by:
\begin{equation}\label{eq:cmckernel}
K_{\mu_t}(x,\dd y) := 
\underbrace{\vphantom{\int_{z\in E}}h(\alpha_{\mu_t}(x,y))\Theta_{\mu_t}(\dd y|x)}_{\text{accept}}
+
\underbrace{\Big[1-\int_{z\in E}h(\alpha_{\mu_t}(x,z))\Theta_{\mu_t}(\dd z|x)\Big]\delta_x(\dd y)}_{\text{reject}}
\end{equation}
and where for $t\in\N$, $\mu_t\in\pb(E)$ is the law of $\overline{X}_t$. 
In the discrete setting,
this method is implemented by Algorithm~\ref{algo:cmc}, detailed below. 
It relies on the following quantities:
\begin{itemize}
    \item The \textbf{proposal distribution}, a map 
    \[\Theta : E\times\pb(E)\longrightarrow\pdf_0(E),\]
    where $\pdf_0(E)$ the subset of non-vanishing absolutely continuous probability measures. For $x\in E$ and $\mu\in~\pb(E)$, its associated proposal probability density function is denoted by:
    \[\Theta(x,\mu)(y) \dd y\equiv \Theta_\mu(y|x)\dd y.\]
    Intuitively, the probability distribution
    $\Theta(x,\mu)$ can be understood as an approximation of the target $\pi$ 
    that our method uses to \emph{propose} new samples
    $y$ in a neighborhood of a point $x$,
    relying on the information that is provided
    by a probability measure~$\mu$. In the following $\mu$ will be the empirical distribution associated with a system of particles. 
    To ensure a fast convergence of our method, both in terms of computation time and number of iterations,
    the distribution $\Theta(x,\mu)$ should be both \textbf{easy to sample} and
    close to the target distribution $\pi$.
    In practice, the choice of a good proposal $\Theta$
    depends on the assumptions that can be made on the distribution~$\pi$.
    We will present several examples in Section \ref{sec:collectiveproposals}. A simple example to keep in mind and which will be detailed in Section \ref{sec:vanillacmc} is $\Theta_\mu(y|x) = K\star \mu(y)$ where $K$ is a random-walk kernel. 
    
    \item For $\mu\in\pb(E)$ and $x,y\in E$, the \textbf{acceptance ratio} is defined by: 
    \[\alpha_\mu(x,y) := \frac{\Theta_\mu(x|y)\pi(y)}{\Theta_\mu(y|x)\pi(x)}.\]
    This quantity expresses the relative appeals
    of the transition $x\rightarrow y$ for the ``model'' density $\Theta_\mu(\dd y|x)$ and the ground truth target $\pi(\dd y)$, as in classical Metropolis-Hastings methods. Noticeably the denominator is never null because of the non-vanishing assumption on $\Theta$.
    Crucially, it can be computed even when the law of $\pi$ is only
    known up to a multiplicative constant
    and allows our method to account for mis-matches
    between the proposal $\Theta$ and
    the distribution to sample $\pi$.
    Note that in practice, for the sake of numerical stability,
    the acceptance ratio is often manipulated through its logarithm:
    \[
    \underbrace{\log \alpha_\mu(x,y)}_{\text{``correction''}} := 
    \underbrace{\big[\log \pi(y) - \log\pi(x)\big]}_{\text{appeal of $x\rightarrow y$ for $\pi$}}
    - 
    \underbrace{\big[ \log \Theta_\mu(y|x) -  \log \Theta_\mu(x|y)}_{\text{appeal of $x\rightarrow y$ for $\Theta_\mu$}}
    \big] .\]
    In the following, we will assume that $\pi$ is bounded away from zero so that the acceptance ratio is always well-defined. 
    
    \item The \textbf{acceptance function} is a non-decreasing Lipschitz map of the form $h:[0,+\infty)\to~[0,1]$ which satisfies
    \begin{equation}\label{eq:acceptancerelation}\forall u\in[0,\infty),\quad uh(1/u)=h(u).\end{equation}
    A classical example is $h(u)=\min(1,u)$. Other examples in the literature can be found in \cite{agrawal2021optimal} and the references therein. In the following, we will occasionally use (for technical reasons) the ``lazy Metropolis-Hastings'' acceptance function $h(u)=\eta \min(1,u)$, $\eta<1$ introduced in \cite{latuszynski2013clts}. 
    
    As detailed in Algorithm~\ref{algo:cmc},
    we combine the acceptance ratio $\alpha_\mu(x,y)$ 
    and the acceptance function $h$ to \textbf{reject} 
    proposed samples $y$ that are much more appealing for $\Theta_\mu(\cdot|x)$
    than they are for $\pi$.
    
    This \textbf{necessary correction} ensures that our method samples
    the target $\pi$ instead of the simpler
    proposal distribution.
    On the other hand, it can also slow down the method
    if the proposed samples $y$ keep being rejected.
    Efficient proposal distributions should keep the acceptance ratio
    high enough to ensure a renewal of the population of samples
    $X^i_t$ at every iteration and thus provide good mixing properties.
    
\end{itemize}

\paragraph{Non-linearity.}
We say that the transition kernel is nonlinear due to its dependency on the law of the chain that it generates. When the proposal distribution does not depend on~$\mu_t$, the kernel is \emph{linear} and we obtain the general form of the classical Metropolis-Hastings kernel.

\paragraph{Interest of the method}
The main interest of the method appears for complex distributions, that is multimodal distributions. From an MCMC point of view, the use of a non-linear sampler removes the mixing problem which is one of the main drawbacks of the Metropolis-Hastings algorithm. For nonlinear samplers, exponential convergence is ensured as soon as the distribution carries weight in every mode, without having to explore and exit each of the modes. On the contrary, the Metropolis-Hastings algorithms may remain stuck in a single mode even with long runs. Nonlinear methods allow to learn efficiently the relative weight of the distribution modes, which is unavailable even for several independent runs of Metropolis-Hastings algorithm with different initialisations, that is the most direct parallel version of Metropolis-Hastings. The numerical experiments will confirm that nonlinear methods perform better for both exploration and convergence.

\paragraph{Contributions.}
We follow \cite{andrieu2011nonlinear} and split our analysis into two steps:

\begin{enumerate}
    \item We show that our non-linear kernel admits $\pi$ as a stationary distribution and study its asymptotic properties. 
    \item We present a practical implementation based on the simulation of a system of interacting particles that enables the simulation of this kernel for different choices of the proposal distribution.
\end{enumerate}

\begin{algorithm}[t]
    \SetAlgoLined
    \KwIn{An initial population of particles $(X^1_0,\ldots,X^N_0)\in E^N$, \\
    a maximum time $T\in\N$, a proposal distribution $\Theta$ \\
    and an acceptance function $h$}\vspace{.1cm}
    \KwOut{A sample $\big(X^i_t\big)_{1 \leq i \leq N ; \ 1 \leq t \leq T}$}\vspace{.1cm}
    \For {$t=0$ \textup{\textbf{to}} $T-1$}{\vspace{.05cm}
    \For{$i=1$ \textup{\textbf{to}} $N$}{\vspace{.05cm}
        \textbf{(Proposal)} Draw $Y^i_t\sim \Theta_{\hat{\mu}_t^N}(\,\cdot\,|X^i_t)$ a proposal for the new state of particle $i$\;
        
        \textbf{(Acceptation)} Compute $\alpha_{\hat{\mu}^N_t}(X^i_t,Y^i_t) = \frac{\Theta_{\hat{\mu}^N_t}(X^i_t|Y^i_t)\pi(Y^i_t)}{\Theta_{\hat{\mu}^N_t}(Y^i_t|X^i_t)\pi(X^i_t)}$\;
        
        Draw $U^i_t\sim\mathcal{U}([0,1])$\; \vspace{.2cm}
        
        \eIf{$U^i_t\leq h\big(\alpha_{\hat{\mu}^N_t}(X^i_t,Y^i_t)\big)$}{\vspace{.05cm}
            Set $X^i_{t+1}=Y^i_t$ \tcp*[r]{Accept, probability $h(\alpha_{\hat{\mu}^N_t}(X^i_t,Y^i_t))$.}
            }{
            Set $X^i_{t+1}=X^i_t$  \tcp*[r]{Reject, likely if $\alpha_{\hat{\mu}^N_t}(X^i_t,Y^i_t) \simeq 0 $.}
        }
    }}
    \vspace*{.1cm}
\caption{Collective Monte Carlo (CMC)}
\label{algo:cmc}
\end{algorithm}

\paragraph{Analytical study.}
Starting from an initial distribution $\mu_0\in\pb(E)$, the law $\mu_t$ of the nonlinear chain at the $t$-th iteration satisfies
    \[\mu_{t+1}=\mathcal{T}[\mu_t]\]
    where $\mathcal{T}:\pb(E)\to\pb(E)$ is the \textbf{transition operator} defined by duality in the space of measures by:
    \begin{align}\label{eq:transitionoperator}
    \langle \varphi, \mathcal{T}[\mu]\rangle &:= \int_{E} \varphi(x)\mathcal{T}[\mu](\dd x) = \iint_{E\times E} \varphi(y)K_\mu(x,\dd y)\mu(\dd x),
    \end{align}
    for any continuous bounded test function $\varphi\in C_b(E)$. 
    Thanks to the \textbf{detailed balance condition} (also called \emph{micro-reversibility} in the context of statistical mechanics \citep{tolman1979principles}), for all $x,y\in E$ and $\mu\in\pb(E)$: 
    \begin{equation}\label{eq:reversibility}\pi(x)\Theta_\mu(y|x)h(\alpha_\mu(x,y))=\pi(y)\Theta_\mu(x|y)h(\alpha_\mu(y,x)),\end{equation}
    the transition operator can be rewritten:
    \[\mathcal{T}[\mu](\dd x)= \mu(\dd x) + \int_{E} \pi(x)W_\mu(x\to y){\left(\frac{\mu(\dd y)}{\pi(y)}\dd x-\frac{\mu(\dd x)}{\pi(x)}\dd y\right)},\]
    with $W_\mu(x\to y) := \Theta_{\mu}(y|x)h(\alpha_{\mu}(x,y))$, from which it can be easily seen that $\mathcal{T}[\pi]=\pi$.

    We are going to develop an analytical framework in which the convergence of the sequence of iterations of the transition operator can be analysed. Using entropy methods, we prove the exponential convergence towards $\pi$ for a large class of proposal distributions. We show that in an asymptotic regime to be detailed, the rate of convergence depends only on how close from the target is the initial condition. As a byproduct, in the linear Metropolis-Hastings case, we obtain a convergence result similar to the one obtained in~\cite{diaconis2011geometric}.

\paragraph{Efficient implementation.}
   It is not possible in general to sample directly $\overline{X}_t$ from a nonlinear kernel because the law $\mu_t$ is not available. We therefore rely on a \textbf{mean-field} particle method to approximate such samples. Starting from a swarm of $N$ particles $X^1_t,\ldots,X^N_t\in E$ at the iteration $t$, we construct the next iteration by sampling independently for $i\in\{1,\ldots,N\}$:
    \[X^i_{t+1}\sim K_{\hat{\mu}^N_t}(X^i_t,\dd y),\]
    where 
    \[\hat{\mu}^N_t:=\frac{1}{N}\sum_{i=1}^N \delta_{X^i_t}\in\pb(E),\]
    is the (random) empirical measure of the system of particles which is used as a proxy of the distribution $\mu_t$. 
    We show that as $N$ goes to infinity and for each $t\in\N$, $\hat{\mu}^N_t$ converges towards a deterministic limit which is the $t$-th iterate of the nonlinear operator $\mathcal{T}$ starting from $\mu_0$. Moreover we show that the $N$ particles are asymptotically, in $N$, independent thus forming an approximation of a system of~$N$ independent nonlinear Markov chains with transition kernel \eqref{eq:cmckernel}. 

    A drawback of this approach is its high computational cost, that may scale in $\mathcal{O}(N^2)$ or $\mathcal{O}(N^3)$ for some choices of the proposal $\Theta$. To overcome this difficulty, we propose an implementation based on GPU, more precisely on the techniques developed in the KeOps library \cite{keops} by the third author. 

\paragraph{Outline.}
Section \ref{sec:framework} is devoted to the convergence analysis of Algorithm \ref{algo:cmc} for a general class of proposal distributions. The mean-field limit and the long-time asymptotic properties are studied respectively in Section \ref{sec:meanfield} and Section \ref{sec:convergence}. Several variants of the main algorithm are presented in Section \ref{sec:collectiveproposals}. The GPU implementation of the different algorithms is detailed in Section \ref{sec:gpu}. Applications to various problems are presented in Section \ref{sec:experiments}. In the appendix, we present the complementary proofs (Appendix \ref{sec:proofpoc}) as well as variations on the results of the paper (Appendix \ref{appendix:continuoustime}). We also add some complementary remarks on the links between our method and other classical methods (Appendix \ref{sec:related}) and other and additional numerical results (Appendix \ref{appendix:CMCIS} and Appendix \ref{appendix:Cauchy}).

Throughout this article, we assume that $\pi$ satisfies the following assumption. 

\begin{assumption}\label{assum:pi} The support of $\pi$, denoted by $E$, is a compact subset of $\R^d$. The target distribution $\pi$ is Lipschitz continuous and $\pi$ does not vanish on $E$ : 
\[m_0:=\inf_E \pi >0\quad\text{and}\quad M_0:=\sup_E \pi<+\infty.\]  
\end{assumption}

\begin{notation} The following notations will be used throughout the article. 
\begin{itemize}
    \item $\pb(E)$ denotes the set of probability measures on $E$.
    \item $\pdf(E)$ denotes the set of probability measures on $E$ which are absolutely continuous with respect to the Lebesgue measure on $\R^d$. A probability measure in $\pdf(E)$ is identified with its associated probability density function: when $f\in\pdf(E)$ we write indifferently,
    \[f(\dd x)\equiv f(x)\dd x.\]
    \item $\pdf_0(E)\subset\pdf(E)$ denotes the subset of continuous probability density functions which do not vanish on $E$ (recall that $E$ is compact).
    \item A test function $\varphi\in C_b(E)$ is a continuous (bounded) function on $E$. For $\mu\in\pb(E)$ we write indifferently 
    \[\langle \varphi,\mu\rangle\equiv\int_E \varphi(x)\mu(\dd x).\]
    \item $X\sim \mu$ means that the law of the random variable $X\in E$ is $\mu\in\pb(E)$.
    \item $W^1(\mu,\nu)$ denotes the Wasserstein-1 distance between the two probability measures $\mu,\nu\in\pb(E)$, defined by $W^1(\mu,\nu):= \inf_{X\sim \mu, Y\sim\nu} \E|X-Y|$ (see \cite[Chapter 6]{villani_optimal_2009} for additional details on Wasserstein distances). 
\end{itemize}
\end{notation}

\section{Convergence analysis}\label{sec:framework}

Algorithm \ref{algo:cmc} gives a trajectorial approximation of the nonlinear Markov chain $(\overline{X}_t)_t$ with law $(\mu_t)_t$ defined by the transition kernel \eqref{eq:cmckernel}. In this section, we prove the convergence of this algorithm under general assumptions on the proposal distribution $\Theta$ described below. The proof of our main result is split into two steps each summarised in a theorem, first the mean-field limit when $N\to+\infty$ (Section \ref{sec:meanfield}) and then the long-time convergence towards $\pi$ (Section \ref{sec:convergence}). 

\subsection{Assumptions}\label{sec:assumptions}

For our theoretical results, we will need the following assumptions. The first three following assumptions are needed to prove the many-particle limit in Section \ref{sec:meanfield}.

\begin{assumption}[Boundedness]\label{assum:bound}
There exist two constants $\kappa^-,\kappa^+>0$ such that for all $\mu\in\mathcal{P}(E)$ and for all $x,y\in E$ :
\[\kappa_-\leq \Theta_\mu(y|x)\leq \kappa_+.\]
\end{assumption}

\begin{assumption}[$L^\infty$ Lipschitz]\label{assum:lipschitz}
The map $\Theta:E\times\pb(E)\to\pdf_0(E)$ is globally Lipschitz for the $L^\infty$-norm on $E$: there exists a constant $L>0$ such that for all $x,y,x',y'\in E$ and for all $(\mu,\nu)\in\mathcal{P}(E)^2$ :
\[\big|\Theta_\mu(y|x)-\Theta_\nu(y'|x')\big|\leq L\Big(W^1(\mu,\nu)+|x-x'|+|y-y'|\Big).\]
\end{assumption}

\begin{assumption}[$W^1$ non-expansive]\label{assum:nonexpansive}
The map $\Theta:E\times\pb(E)\to\pdf_0(E)$ is non-expansive for the Wasserstein-1 distance: for all $x,x'\in E$ and for all $(\mu,\nu)\in\mathcal{P}(E)^2$,
\[W^1\Big(\Theta_\mu(\dd y|x),\Theta_\nu(\dd y|x')\Big)\leq W^1(\mu,\nu)+|x-x'|.\]
\end{assumption}

All the proposal distributions presented in Section \ref{sec:collectiveproposals} are based on a convolution product with one or many fixed kernels. The smoothness and boundedness properties of the proposal distribution (Assumptions \ref{assum:bound}, \ref{assum:lipschitz}, \ref{assum:nonexpansive}) are thus inherited from the properties of these kernels. Moreover, we note that these assumptions on the proposal distribution and the compactness Assumption \ref{assum:pi} on the target distribution could be replaced by assumptions on the ratio between the two distributions as explained at the end of Appendix \ref{sec:proofpoc} (see in particular Remark \ref{rem:aboutassumptions}).

The next two assumptions are needed to prove the long-time convergence property in Section \ref{sec:convergence}.

\begin{assumption}\label{assum:hbounded}
There exists $\eta\in (0,1)$ such that 
\[\forall u\in [0,+\infty),\quad h(u)\leq \eta.\]
\end{assumption}

\begin{rem}
Assumption \ref{assum:hbounded} is satisfied for instance for $h(u)=\eta\min(1,u)$ (which is referred as the ``lazy Metropolis-Hastings'' acceptance function in \cite{latuszynski2013clts}). We make this assumption mostly for technical reasons in order to obtain in an easy manner an explicit convergence rate in Theorem \ref{thm:convergencenonlineardiscrete}. However, using compactness arguments, we can prove that the convergence of $(\mu_t)_t$ towards $\pi$ still holds without this assumption (see Corollary \ref{coro:continuousimpliesdiscrete}).
\end{rem}

The next assumption ensures that the proposal distribution is not too ``far'' from $\pi$.

\begin{assumption}[Monotonicity]\label{assum:monotonicity}
The proposal distribution $\Theta$ satisfies the following monotonicity property: there exists a non decreasing function
\[c^-:[0,1]\to(0,1],\]
such that for all $\mu\in\pdf_0(E)$, 
\[\inf_{(x,y)\in E^2} \frac{\Theta_\mu(y|x)}{\pi(y)}\geq c^-\left(\inf_{x\in E} \frac{\mu(x)}{\pi(x)}\right).\]
\end{assumption}

\begin{rem}
Note that under Assumptions \ref{assum:pi} and \ref{assum:bound}, Assumption \ref{assum:monotonicity} is always satisfied with a constant function $c^- \equiv \kappa_-/M_0$. Sharper results can be obtained in specific cases. Moreover, note that Assumptions \ref{assum:bound}, \ref{assum:lipschitz} and \ref{assum:nonexpansive} are not necessary to prove Theorem \ref{thm:convergencenonlineardiscrete}. 
\end{rem}

\begin{rem}\label{rem:convolutioncminus}
The monotonicity Assumption \ref{assum:monotonicity} is satisfied for all the ``convolution based'' methods such as Algorithms~\ref{algo:coker} and \ref{algo:marmok}, which will be introduced later, since for $m>0$, it holds that:  
\[[\forall y\in E,\quad m\pi(y)\leq \mu(y)]\quad \Longrightarrow\quad [\forall y\in E,\quad m K\star\pi(y)\leq K\star \mu(y)],\]
and therefore, if the left-hand side condition holds, dividing by $\pi(y)$ yields:
\[\forall y\in E,\quad\frac{K\star \mu(y)}{\pi(y)}\geq m \frac{K\star\pi(y)}{\pi(y)}.\]
On the right-hand side of the last inequality the ratio $K\star\pi(y)/\pi(y)$ depends only on $\pi$ and is bounded from below, at least for small interaction kernels $K$, since $K\star\pi$ converges uniformly towards $\pi$ as $K\to\delta_0$. In the degenerate case $K=\delta_0$, we obtain $c^{-}(m)=m$ for all $m>0$ (see Remark \ref{rem:optimalrate}).
\end{rem}

\subsection{Main result}

The following theorem is our main convergence result.

\begin{theorem}\label{thm:convergencecmc}
Let $\hat{\mu}^N_t = \frac{1}{N}\sum_{i=1}^N \delta_{X^i_t}$ be the random empirical distribution of the particle system constructed at the $t$-th iteration of Algorithm \ref{algo:cmc} with an i.i.d. $\mu_0\in\pdf_0(E)$ distributed initial condition. Under Assumptions \ref{assum:bound}, \ref{assum:lipschitz}, \ref{assum:nonexpansive}, \ref{assum:hbounded} and \ref{assum:monotonicity}, there exist $C_1, C_2, C_3>0$ and $\lambda\in(0,1)$  which depend only on $\mu_0$, $\pi$ and $E$ such that for all $t\in\N$,
\[\E W^1(\hat{\mu}^N_t,\pi) \leq C_1\beta(N)\e^{tC_2} + C_3(1-\lambda)^{t/2},\]
where
\begin{equation}\label{eq:glivenkocantelli}
\beta(N):=\left\{\begin{array}{ll}
CN^{-1/2} & \text{if}\,\,\,d=1 \\
CN^{-1/2}\log(N) & \text{if}\,\,\,d=2\\
CN^{-1/d} & \text{if}\,\,\,d>2
\end{array}
\right.,
\end{equation}
and $C>0$ is a constant which depends only on $E$ and $\pi$. In particular $\beta(N)\to0$ as $N\to~+~\infty$.
\end{theorem}

\begin{rem}
The convergence speed in $t$ of the second term on the right-hand side is geometric. It corresponds to the convergence speed of the limit nonlinear Markov chain stated in Theorem \ref{thm:convergencetopi}. Note that it is also the convergence speed of the classical Metropolis-Hastings algorithm. This result does not account for practical mixing issues that will be described later.
\end{rem}

\begin{proof}
This result is deduced from Theorem \ref{thm:pathwiseestimate} and \ref{thm:convergencenonlineardiscrete}, as it is a direct consequence of the triangle inequality
\[\E W^1(\hat{\mu}^N_t,\pi) \leq \E W^1(\hat{\mu}^N_t,\mu_t) + \E W^1({\mu}_t,\pi),\]
and the convergence results \eqref{eq:EW1tdiscrete} and \eqref{eq:mut-piTV}. In order to bound the second term on the right-hand side, we recall that on the compact set $E$, the total variation norm controls the Wasserstein-1 distance \cite[Theorem 6.15]{villani_optimal_2009}.
\end{proof}

By the Kantorovich characterisation of the Wasserstein distance \cite[Remark 6.5]{villani_optimal_2009}, this result ensures the convergence in expectation of any Lipschitz observable (and by density of any continuous observable) in the double limit $N\to+\infty$ and $t\to+\infty$ provided that $\beta(N)\e^{C_2 t}\to 0$. These results and in particular the link between $N$ and $t$ are mostly of theoretical nature and often suboptimal; in practice higher convergence rates may be obtained (see Section \ref{sec:experiments}).

Moreover, although Theorem \ref{thm:convergencecmc} states a geometric convergence result for the nonlinear samplers which is similar to classical convergence results for the classical (linear) Metropolis-Hastings algorithm, the behaviours of nonlinear and linear samplers can be very different in practise. Nonlinear samplers are typically much more efficient in the sense that they need less iterations to converge and although each iteration is more costly, the total computation time remains much shorter. In addition to the experiments shown in Section \ref{sec:experiments}, Remark \ref{rem:optimalrate} gives a theoretical result obtained as a consequence of Theorem \ref{thm:convergencecmc} which illustrates this difference.  

\subsection{Mean field approximation}\label{sec:meanfield}

In this section, we show that the system of particles defined by Algorithm~\ref{algo:cmc} satisfies the propagation of chaos property and that the limiting law at each iteration is the law of the nonlinear Markov chain with transition kernel satisfying \eqref{eq:cmckernel}. From now on, we assume that the proposal distribution $\Theta$ satisfies Assumptions \ref{assum:bound}, \ref{assum:lipschitz} and \ref{assum:nonexpansive} (see also the discussion in Remark~\ref{rem:aboutassumptions}).

The main result of this section is the following theorem. 

\begin{theorem}[Coupling bound]\label{thm:pathwiseestimate}
Let $\Theta$ be a proposal distribution which satisfies Assumptions \ref{assum:bound}, \ref{assum:lipschitz} and \ref{assum:nonexpansive}. Let $t\in\N$. There is a probability space on which are defined a system of $N$ i.i.d. nonlinear Markov chains $(\overline{X}^i_t)^{}_t$, $i\in\{1,\ldots,N\}$, defined by the transition kernel \eqref{eq:cmckernel} and a system of $N$ particles $(X^i_t)^{}_t$, $i\in\{1,\ldots,N\}$, which is equal in law to the $N$-particle system constructed by Algorithm \ref{algo:cmc}, such that
\begin{equation}\label{eq:couplingbound}\forall i\in\{1,\ldots,N\},\quad\E|\overline{X}^i_t-X^i_t|\leq\beta(N)\e^{tC_\Theta},\end{equation}
where $C_\Theta>0$ is a constant which depends only on $\pi$ and $\Theta$ and where $\beta(N)$ is defined by \eqref{eq:glivenkocantelli} 
\end{theorem}

The proof of Theorem \ref{thm:pathwiseestimate} is based on coupling arguments inspired by \cite{sznitman1991topics} and adapted from \cite{diez2019propagation}. It can be found in Appendix \ref{sec:proofpoc}. This so-called coupling estimate classically implies the following properties (see \cite{sznitman1991topics}). 

\begin{corollary}[Mean-field limit and propagation of chaos]\label{thm:poc}
Let $\Theta$ be a proposal distribution which satisfies Assumptions \ref{assum:bound}, \ref{assum:lipschitz} and \ref{assum:nonexpansive}. Let $(X^i_0)_{i\in\{1,\ldots,N\}}$ be $N$ i.i.d. random variables with common law $\mu_0\in \pb(E)$ (\emph{chaoticity assumption}). Let $t\in\N$ and let $(X^i_t)_{i\in\{1,\ldots,N\}}$ be the $N$ particles constructed at the $t$-th iteration of Algorithm \ref{algo:cmc}. Let $\mu_t=\mathcal{T}^{(t)}[\mu_0]$ be the $t$-th iterate of the transition operator \eqref{eq:transitionoperator} starting from $\mu_0$, that is $\mu_t$ is the law of the nonlinear Markov chain defined by the transition kernel \eqref{eq:cmckernel} at iteration~$t$. Then the following properties hold true.
\begin{enumerate}
    \item The (random) empirical measure $\hat{\mu}^N_t = \frac{1}{N}\sum_{i=1}^N \delta_{X^i_t}$ satisfies
    \begin{equation}\label{eq:EW1tdiscrete}\E W^1(\hat{\mu}^N_t, \mu_t) \leq C_1\beta(N)\e^{tC_2},\end{equation}
    where $C_1,C_2>0$ are two absolute constants and $\beta(N)$ is defined by \eqref{eq:glivenkocantelli}.
    \item The sequence of random empirical measures $\hat{\mu}^N_t = \frac{1}{N}\sum_{i=1}^N \delta_{X^i_t}$, seen a sequence of $\pb(E)$-valued random variables, converges in law towards the deterministic limit :  
\[\hat{\mu}^N_t\underset{N\to+\infty}{\longrightarrow}\mu_t.\]
We recall that the space of probability measures is endowed with the topology of the weak convergence, i.e. convergence against bounded continuous test functions. The convergence in law of a sequence of random measures thus means the weak convergence of their laws in the space $\pb(\pb(E))$, i.e. since $\mu_t$ is deterministic, it holds that $\mathrm{Law}(\hat{\mu}^N_t)\in\pb(\pb(E)) \to \delta_{\mu_t} \in \pb(\pb(E))$ where $\delta_{\mu_t}$ is the Dirac mass at the point $\mu_t\in \pb(E)$ and the convergence is the weak convergence in $\pb(\pb(E))$. This is also equivalent to say that for all test function $\Phi\in C_b(\pb(E))$, $\E[\Phi(\hat{\mu}^N_t)]\to \Phi(\mu_t)$.
\item For every $\ell$-tuple of continuous bounded functions $\varphi_1,\ldots\varphi_\ell$ on $E$, it holds that: 
\begin{equation}\label{eq:kacpoc}\int_{E^\ell}\varphi_1(x_1)\ldots\varphi_\ell(x_\ell)\mu^{\ell,N}_t(\dd x_1,\ldots,\dd x_\ell)\underset{N\to+\infty}{\longrightarrow} \prod_{k=1}^\ell \langle \varphi_k, \mu_t\rangle,\end{equation}
where $\mu^{\ell,N}_t$ is the joint law at time $t$ of any subset of $\ell$ particles constructed by Algorithm \ref{algo:cmc} at iteration $t$. 
\end{enumerate}
\end{corollary}

\begin{proof}
The first property follows from the triangle inequality
\[\E W^1(\hat{\mu}^N_t,\mu_t) \leq \E W^1(\hat{\mu}^N_t,\overline{\mu}^N_t) + \E W^1(\overline{\mu}^N_t, \mu_t),\]
where $\overline{\mu}^N_t = \frac{1}{N}\sum_{i=1}^N \delta_{\overline{X}^i_t}$ is the empirical measure of the $N$ nonlinear Markov chains constructed in Theorem \ref{thm:pathwiseestimate}. The first term on the right-hand side is bounded by \eqref{eq:couplingbound} by definition of the Wasserstein distance and the exchangeability of the processes. The second term on the right-hand side is bounded by $\beta(N)$ by \cite[Theorem 1]{fournier2015rate}. The second property is a classical consequence of \eqref{eq:couplingbound}, see \cite[Section 1]{hauray_kacs_2014}. The third property is equivalent to the second property by \cite[Proposition 2.2]{sznitman1991topics}. 
\end{proof} 

The property \eqref{eq:kacpoc} corresponds to the original formulation of the propagation of chaos introduced by \cite{kac1956foundations}. From our perspective, it justifies the use of Algorithm~\ref{algo:cmc} and ensures that as the number of particles grows to infinity and despite the interactions between the particles, we asymptotically recover an i.i.d. sample. The final MCMC approximation of the expectation of an observable $\varphi\in C_b(E)$ is thus given at the $t$-th iteration by: 
\[\int_E \varphi(x)\pi(\dd x) \simeq \frac{1}{N}\sum_{i=1}^N \varphi(X^i_t).\]

\begin{rem}\label{rem:diffusionrwm} In \cite{jourdain_optimal_2015}, the authors prove the propagation of chaos towards a continuous-time (nonlinear) diffusion process for a classical random walk Metropolis-Hastings algorithm in the product space $E^N$ where the trajectory of each of the $N$ dimensions is interpreted as a particle, where the target distribution $\pi^{\otimes N}$ is tensorized and under a specific scaling limit in $N$ for the time and the size of the random-walk kernel. In this algorithm, each move is globally accepted or rejected for the $N$ particles whereas in Algorithm \ref{algo:cmc}, the acceptance step is individualized for each particle. One consequence is that, unlike the algorithm in \cite{jourdain_optimal_2015}, for a fixed number $N$ of particles, $\pi^{\otimes N}$ is in general \emph{not} a stationary distribution of the particle system defined by Algorithm \ref{algo:cmc}. 
\end{rem}

\subsection{Long-time Asymptotics}\label{sec:convergence}

In this section, we prove that $\pi$ is the unique stationary measure of the nonlinear Markov chain defined by the transition kernel \eqref{eq:cmckernel} and we give a quantitative long-time convergence result. 

\subsubsection{Main result}

Let $(\mu_t)_{t\in\N}$ be the sequence of laws of the nonlinear Markov chain defined by the transition kernel \eqref{eq:cmckernel}. It satisfies the recurrence relation 
\begin{equation}\label{eq:recmu}\mu_{t+1} = \mathcal{T}[\mu_t],\end{equation}
where we recall that given $\mu\in\pb(E)$, the transition operator $\mathcal{T}$ is defined by: 
\[\mathcal{T}[\mu](\dd x)= \mu(\dd x) + \int_{E} \pi(x)W_\mu(x\to y){\left(\frac{\mu(\dd y)}{\pi(y)}\dd x-\frac{\mu(\dd x)}{\pi(x)}\dd y\right)},\]
and
\[W_\mu(x\to y) := \Theta_{\mu}(y|x)h(\alpha_{\mu}(x,y)).\]
Note that if the initial condition has a density with respect to the Lebesgue measure, then $\mu_t$ has also a density with respect to the Lebesgue measure. In the following we make this assumption and we write with a slight abuse of notations $\mu_t(x)\dd x \equiv \mu_t(\dd x)$ for this density. The following elementary lemma shows that the ratio $\mu_t/\pi$ is controlled by the initial condition.  

\begin{lemma}\label{lemma:infsupdiscrete} For any $t\in\N$, let $\mu_t\in\pdf_0(E)$ be given by the recurrence relation \eqref{eq:recmu} with initial condition $\mu_0\in\pdf_0(E)$. Then 
\[\inf_{x\in E} \frac{\mu_t(x)}{\pi(x)}\geq \inf_{x\in E} \frac{\mu_0(x)}{\pi(x)},\quad\sup_{x\in E} \frac{\mu_t(x)}{\pi(x)}\leq \sup_{x\in E} \frac{\mu_0(x)}{\pi(x)}.\]
\end{lemma}

\begin{proof}
Since $W_\mu(x\to y)\geq 0$ and $\int_E W_\mu(x\to y)\dd y \leq 1$, this comes directly from the relation 
\[\frac{\mathcal{T}[\mu](x)}{\pi(x)} = {\left(1-\int_E W_\mu(x\to y)\dd y\right)}\frac{\mu(x)}{\pi(x)}+\int_E W_\mu(x\to y)\frac{\mu(y)}{\pi(y)}\dd y,\]
for all $\mu\in\pdf_0(E)$ and $x\in E$.
\end{proof}

The main result of this section is the following convergence result which is a direct consequence of the results presented in the following sections and discussed below.  

\begin{theorem}\label{thm:convergencenonlineardiscrete}
Let $\Theta\in\pdf_0(E)$ be a proposal distribution which satisfies Assumption \ref{assum:monotonicity} and $h$ be an acceptance function which satisfies Assumption \ref{assum:hbounded}. Then there exist two (explicit) constants $C>0$ and $\lambda\in (0,1)$ which depend only on $\mu_0$, $\pi$ and $E$ such that 
\begin{equation}\label{eq:mut-piTV}
\|\mu_t-\pi\|_{\mathrm{TV}}\leq C(1-\lambda)^{t/2}
\end{equation}
\end{theorem}

\begin{proof}
Using the fact that the Total Variation norm is equal to the $L^1$ norm of the probability density functions and the Cauchy-Schwarz inequality, it holds that 
\begin{align*}\|\mu_t-\pi\|_{\mathrm{TV}} &=\int_E |\mu_t(x)-\pi(x)|\dd x = \int_E \sqrt{\pi(x)}\sqrt{\pi(x)}\left|\frac{\mu_t(x)}{\pi(x)}-1\right|\dd x \\
&\leq \sqrt{2\mathcal{H}[\mu_t|\pi]},\end{align*}
where $\mathcal{H}[\mu_t|\pi] := \int_E \pi(x)|\mu_t(x)/\pi(x) - 1|^2\dd x$ is the relative entropy which will be introduced later, see \eqref{eq:entropydissipationdef} with the function $\phi:s\mapsto \frac{1}{2}(s-1)^2$. The conclusion follows from Proposition \ref{prop:entropyexpdecaydiscrete} which proves that the relative entropy decays geometrically in $t$. 
\end{proof}

\begin{rem}
The last inequality between the TV norm and the square root of the relative entropy is a simple form of a Csisz\'{a}r-Kullback-Pinsker inequality, see \cite[Appendix~A]{jungel2016entropy} and \cite{bolley2005weighted}.
\end{rem}

\begin{rem}[Convergence rate of nonlinear samplers]\label{rem:optimalrate} In this proof, the convergence rate $\lambda$ is obtained by a crude estimate of the infimum of the jump rate $W_{f_t}(x\to y)$. We do not claim that this rate is optimal. In the degenerate case $\Theta_\mu(y|x)=\mu(y)$, the best rate obtained is equal to $h(1)$ by taking an initial condition arbitrarily close to $\pi$ (see Remark \ref{rem:convolutioncminus}). In Remark~\ref{rem:moderateinteraction}, it is shown that these proposal distributions can be obtained as the limit when $\sigma\to0$ of the proposal distributions $\Theta_\mu(y|x)=K_\sigma\star\mu(y)$ where $K_\sigma$ is a random-walk kernel with variance $\sigma^2$. These proposal distributions are the simplest nonlinear analog of the random-walk Metropolis-Hastings algorithm with kernel $K_\sigma$ (the proposal distribution in this case is $K_\sigma\star \delta_x (y)$). However in the nonlinear case, the convergence rate tends to a constant nonnegative value when $\sigma\to0$ while in the random-walk Metropolis-Hastings case the convergence rate tends to 0 (see Theorem \ref{thm:convergencemetropolis}). 
\end{rem}

In the linear case, that is when $\mathcal{T}$ is the linear transition operator of the classical random walk Metropolis-Hastings algorithm, the convergence of the sequence of iterates of $\mathcal{T}$ is studied in particular in \cite{mengersen1996rates} and more recently in \cite{diaconis2011geometric} using analytical spectral methods. It is not possible to follow this strategy in the nonlinear case. In order to motivate our strategy, let us notice that the recurrence relation \eqref{eq:recmu} can be interpreted as the explicit Euler discretization scheme 
\begin{equation}\label{eq:expliciteuler}\frac{\mu_{t+1}-\mu_t}{\Delta t} = \mathcal{T}[\mu_t]-\mu_t,\end{equation}
of the nonlinear Partial Differential Equation 
\begin{equation}\label{eq:nonlinearpde}
    \partial_t f_t = \mathcal{T}[f_t]-f_t
\end{equation}
with a constant time-step $\Delta t=1$. The PDE \eqref{eq:nonlinearpde} has a remarkable entropic structure which is detailed in Appendix \ref{appendix:continuoustime} and which allows to prove that $f_t\to\pi$ as $t\to+\infty$. Entropy methods are by now a classical tool to study the long-time properties of both linear and nonlinear PDEs, see \cite{jungel2016entropy, schmeiser2018entropy}. The proof of Theorem \ref{thm:convergencenonlineardiscrete} is based on the adaptation of these ideas to the present discrete-time setting. A more detailed discussion of the links between \eqref{eq:expliciteuler} and \eqref{eq:nonlinearpde} can be found in Appendix \ref{appendix:linksdiscretecontinuous}. Entropy methods have been used previously in a similar context in \cite{jourdain_optimal_2014} to prove the long-time convergence of a process obtained as the scaling limit of a particle-based Metropolis-Hastings algorithm~\cite{jourdain_optimal_2015} which, unlike the present case, is a continuous-time (nonlinear) diffusion process (see Remark~\ref{rem:diffusionrwm}). 

\subsubsection{Entropy and dissipation}

For a given convex function $\phi:[0,+\infty)\to[0,+\infty)$ such that $\phi(1)=0$, the relative entropy $\mathcal{H}[\mu|\pi]$ and dissipation $\mathcal{D}[\mu|\pi]$ of a probability density $\mu\in\pdf(E)$ with respect to $\pi$ are defined respectively by
\begin{equation}\label{eq:entropydissipationdef}\mathcal{H}[\mu|\pi] := \int_E \pi(x) \phi{\left(\frac{\mu(x)}{\pi(x)}\right)} \dd x,\quad \mathcal{D}[\mu|\pi] := -\int_E \phi'{\left(\frac{\mu(x)}{\pi(x)}\right)}\big(\mathcal{T}[\mu](x)-\mu(x)\big)\dd x.\end{equation}
Using the detailed balance property 
\[\pi(x)W_\mu(x\to y) = \pi(y)W_\mu(y\to x),\]
for all $x,y\in E$ and $\mu\in \pdf(E)$, it holds that 
\begin{multline*}\mathcal{D}[\mu|\pi] =\\ \frac{1}{2}\iint_{E\times E} \pi(x)W_\mu(x\to y){\left(\frac{\mu(y)}{\pi(y)}-\frac{\mu(x)}{\pi(x)}\right)}{\left(\phi'{\left(\frac{\mu(y)}{\pi(y)}\right)}-\phi'{\left(\frac{\mu(x)}{\pi(x)}\right)}\right)}\dd x\dd y,\end{multline*}
and thus $\mathcal{D}[\mu|\pi]\geq 0$ by convexity of $\phi$.

In the following, we focus on the case $\phi(s) = \frac{1}{2}(s-1)^2$ for which we can prove the following crucial lemma. Note also that in this case the relative entropy is equal to a weighted $L^2$ norm and thus dominates the Total Variation norm between probability density functions. 

\begin{lemma}\label{lemma:discreteentropynonincreasing}
Let $\phi(s) = \frac{1}{2}(s-1)^2$, then the sequence $(\mathcal{H}[\mu_t|\pi])_{t\in\N}$ is non-increasing. 
\end{lemma}

\begin{proof}
In this case, 

\begin{equation}\label{eq:entropysquare}\mathcal{H}[\mu|\pi] = \frac{1}{2}\int_E \pi(x){\left|\frac{\mu(x)}{\pi(x)}-1\right|^2}\dd x = \frac{1}{4}\iint_{E\times E} \pi(x)\pi(y){\left|\frac{\mu(x)}{\pi(x)}-\frac{\mu(y)}{\pi(y)}\right|^2}\dd x\dd y,\end{equation}
and 
\begin{equation}\label{eq:dissipationphipol2}\mathcal{D}[\mu|\pi] = \frac{1}{2}\iint_{E\times E} \pi(x)W_\mu(x\to y){\left|\frac{\mu(y)}{\pi(y)}-\frac{\mu(x)}{\pi(x)}\right|^2}\dd x\dd y.\end{equation}

Moreover, since $\phi(s)=\frac{1}{2}(s-1)^2$ is a polynomial of order 2, the exact Taylor expansion 
\[\phi(v) = \phi(u) + \phi'(u)(v-u) + \frac{1}{2}\phi''(u)(v-u)^2,\]
yields 
\begin{equation}\label{eq:entropydissipationtaylor}\mathcal{H}[\mu_{t+1}|\pi]-\mathcal{H}[\mu_t|\pi] = -\mathcal{D}[\mu_t|\pi] + \frac{1}{2}\int_E \pi(x){\left|\frac{\mu_{t+1}(x)}{\pi(x)}-\frac{\mu_t(x)}{\pi(x)}\right|^2}\dd x.\end{equation}
Using that $\mu_{t+1}=\mathcal{T}[\mu_t]$ and the definition of $\mathcal{T}$, it holds that 
\begin{multline*}\int_E \pi(x){\left|\frac{\mu_{t+1}(x)}{\pi(x)}-\frac{\mu_t(x)}{\pi(x)}\right|^2}\dd x \\ = \int_E \pi(x) {\left|\int_E W_{\mu_t}(x\to y){\left(\frac{\mu_t(y)}{\pi(y)}-\frac{\mu_t(x)}{\pi(x)}\right)}\dd y\right|^2}\dd x.\end{multline*}
By the Cauchy-Schwarz inequality, we get
\begin{multline*}\int_E \pi(x){\left|\frac{\mu_{t+1}(x)}{\pi(x)}-\frac{\mu_t(x)}{\pi(x)}\right|^2}\dd x\\\leq \iint_{E\times E} \pi(x)W_{\mu_t}(x\to y) {\left|\frac{\mu_{t}(y)}{\pi(y)}-\frac{\mu_t(x)}{\pi(x)}\right|^2}\int_{E}W_{\mu_t}(x\to z)\dd z \dd x\dd y. \end{multline*}
Reporting into \eqref{eq:entropydissipationtaylor} and using \eqref{eq:dissipationphipol2} yields
\begin{multline}\label{eq:entropydissipationsquarediscrete}
    \mathcal{H}[\mu_{t+1}|\pi] - \mathcal{H}[\mu_t|\pi] \\ 
    \leq -\frac{1}{2}\iint_{E\times E} \pi(x)W_{\mu_t}(x\to y){\left(1-\int_E W_{\mu_t}(x\to z)\dd z\right)}{\left|\frac{\mu_{t}(y)}{\pi(y)}-\frac{\mu_t(x)}{\pi(x)}\right|^2}\dd x\dd y.
\end{multline}
Since $W_{\mu_t}(x\to y)\geq0$ and $\int_{E} W_{\mu_t}(x\to z)\dd z\leq 1$, the right-hand side is non negative which concludes the proof. 
\end{proof}

\subsubsection{Exponential decay of the entropy}\label{sec:exponentialdecaydiscrete}

Under the Assumptions \ref{assum:hbounded} and \ref{assum:monotonicity}, it is possible to improve the result of Lemma \ref{lemma:discreteentropynonincreasing} and to prove a quantitative exponential decay result. Since the entropy is given by \eqref{eq:entropysquare}, owing to \eqref{eq:entropydissipationsquarediscrete}, the goal is to bound from below by a multiple of $\pi(y)$ the quantity 
\[W_{\mu_t}(x\to y){\left(1-\int_E W_{\mu_t}(x\to z)\dd z\right)},\]
for all $x,y\in E$ and uniformly in $t$.

The main consequence of Assumption \ref{assum:monotonicity} is the following lower bound.

\begin{lemma}\label{lemma:boundbelowW}
Let $h:[0,+\infty)\to[0,1]$ be a continuous non-decreasing acceptance function which satisfies the relation \eqref{eq:acceptancerelation}. Let $\Theta$ be a proposal distribution which satisfies Assumption \ref{assum:monotonicity}. Then for all $\mu\in\pdf_0(E)$ and all $x,y\in E$,
\[W_{\mu}(x\to y)\geq \lambda(\mu) \pi(y),\quad\lambda(\mu) := c^-{\left(\inf_{x\in E}\frac{\mu(x)}{\pi(x)}\right)}h(1)\in (0,1].\]
As a consequence, using the fact that $c^-$ is non decreasing and Lemma \ref{lemma:infsupdiscrete}, for any $t\in\N$, it holds that 
\[W_{\mu_t}(x\to y)\geq \lambda_0 \pi(y),\quad\lambda_0 := c^-{\left(\inf_{x\in E}\frac{\mu_0(x)}{\pi(x)}\right)}h(1).\]
\end{lemma}

\begin{proof}
Let us first prove that for any bounded interval $[a,b]$ with $a>0$, it holds that
\[\inf_{(x,y)\in[a,b]^2} yh\left(\frac{x}{y}\right)=ah(1).\]
Since $h$ is continuous non-decreasing, for each $y\in[a,b]$, the function $x\in[a,b]\mapsto yh(x/y)$ has a minimum in $x=a$. This shows that the minimum on the function $(x,y)\in[a,b]^2\mapsto yh(x/y)$ is attained on the segment $\{(a,y),y\in[a,b]\}$. Since $yh(a/y)=ah(y/a)$, the same reasoning shows that this minimum is attained when $y=a$. The conclusion follows. Then, using Assumption \ref{assum:monotonicity} and applying this result with $a = c^-{\left(\inf_{x\in E}\frac{\mu(x)}{\pi(x)}\right)}$ yields
\begin{align*}
    W_{\mu}(x\to y)=\Theta_{\mu}(y|x)h(\alpha_{\mu}(x,y))&=\frac{\Theta_{\mu}(y|x)}{\pi(y)}h\left(\frac{\Theta_{\mu}(x|y)\pi(y)}{\pi(x)\Theta_{\mu}(y|x)}\right)\pi(y)\\
    &\geq c^-{\left(\inf_{x\in E}\frac{\mu(x)}{\pi(x)}\right)}h(1)\pi(y).
\end{align*}
\end{proof}

We are now ready to prove that in this case, the relative entropy converges exponentially fast towards zero. 

\begin{proposition}\label{prop:entropyexpdecaydiscrete}
Under Assumption \ref{assum:hbounded} and Assumption \ref{assum:monotonicity}, there exists a constant ${\lambda}\in (0,1)$ such that for all $t\in\N$,
\[\mathcal{H}[\mu_{t}|\pi] \leq (1-{\lambda})^t \mathcal{H}[\mu_0|\pi].\]
\end{proposition}

\begin{proof}
From the relation \eqref{eq:entropydissipationsquarediscrete} and using the assumptions on $h$ and $\Theta$, it holds that
\[\mathcal{H}[\mu_{t+1}|\pi] - \mathcal{H}[\mu_t|\pi] \leq -2\lambda_0(1-\eta)\mathcal{H}[\mu_t|\pi].\]
Moreover by definition of $\lambda_0$, it holds that $\lambda_0\leq \eta$ and thus ${\lambda}:=2\lambda_0(1-\eta)<1$. We deduce that
\[\mathcal{H}[\mu_{t+1}|\pi] \leq (1-{\lambda})\mathcal{H}[\mu_t|\pi],\]
and the conclusion follows. 
\end{proof}

\section{Some Collective Proposal Distributions}\label{sec:collectiveproposals}

The proposal distribution can be fairly general and so far, we have not detailed how to choose it. Several choices of proposal distributions are gathered in this section. 

This proposal should use the maximum of information coming from the value of the target $\pi$, in order to increase the fitness of $\Theta_\mu$ to the true distribution. However, the proposal must also allow for some exploration of the parameter space, this problem is addressed for some of the proposals.

Here, we only intend to present some of the proposals possible, each one having pros and cons, on the theoretical and practical side.

In view of Theorem \ref{thm:poc}, we can see each of the proposal distributions presented in this section either as a specific choice of nonlinear kernel \eqref{eq:cmckernel} with its associated nonlinear process or as its particle approximation given by Algorithm~\ref{algo:cmc}. From this second perspective the proposal distribution can be seen as a specific interaction mechanism between the particles. More specifically, it depicts a specific procedure which can be interpreted as ``information sharing'' between the particles: given the positions of \emph{all} the $N$ particles at a given time, we aim at constructing the best interaction mechanism which will favour a specific aspect such as acceptance, convergence speed, exploration etc. By analogy with systems of swarming particles which exchange local information (here, the local value of the target distribution) to produce global patterns (here, a globally well distributed sample), we call this class of proposal distributions \textbf{collective}. The class of methods introduced will be referred as Collective Monte Carlo methods (CMC). On the contrary, the nonlinear kernels introduced in \cite{andrieu2007nonlinear} do not belong to this class as explained in Appendix~\ref{sec:anothernonlinear}. For each proposal we give an implementation which, starting from population of particles $(X^1,\ldots,X^N)\in E^N$, returns a proposal $Y$.

Although our theoretical results (Theorem \ref{thm:poc} and Theorem \ref{thm:convergencenonlineardiscrete}) are general enough to encompass almost all of the proposal distributions described here (see Section \ref{sec:assumptions}), the validity and numerical efficiency of each of them will be assessed in Section \ref{sec:experiments} on various examples of target distributions. 

\subsection{Metropolis-Hastings Proposal (PMH)}\label{sec:pmh}

\paragraph{Proposal distribution.}
The classical Metropolis-Hastings algorithm fits into our formalism, with
\[\Theta_\mu(\dd y|x) = q(y|x)\dd y,\]
where $q$ is a fixed random walk kernel which does not depend on $\mu$.  

\paragraph{Particle implementation.} In this case, Algorithm \ref{algo:cmc} reduces to the simulation of $N$ independent Metropolis-Hastings chains in parallel.

\subsection{Convolution Kernel Proposal (Vanilla CMC)}\label{sec:vanillacmc} 

\paragraph{Proposal distribution.}
Let us then consider the following proposal distribution given by the convolution: 
\begin{equation}\label{eq:vanillacmcproposal}\Theta_\mu(\dd y|x) = K\star\mu(y)\dd y := \left(\int_E K(y-z)\mu(\dd z)\right)\dd y,\end{equation}
where $K$ is a fixed \emph{interaction kernel}, that is a (smooth) radial function which tends to zero at infinity. Typical examples are $K(x) = \mathcal{N}(0,\sigma^2 Id)$ and $K(x) =\frac{1}{|B_\sigma(0)|}1_{B_\sigma(0)}(x)$,
where $\sigma>0$ is fixed and $B_\sigma(0)$ denotes the ball of radius $\sigma>0$ centred at 0 in $\R^d$. Note that the proposal distribution does not depend on the starting point $x$. It may happen that the proposed state falls outside $E$. In this case it will be rejected since $\pi$ is equal to zero outside $E$. One can therefore take equivalently $\Theta_\mu(\dd y|x) \propto K\star\mu(y)\1_{E}(y)\dd y$ (the same remark holds for the other collective proposal distributions).

\paragraph{Particle implementation.} At each time step $t$, each particle $i$ samples uniformly another particle $j$ and then draw a proposal $Y^i_{t}\sim K(X^j_t,\dd y)$ which can be seen as a ``mutation'' of $X^j_t$. This ``resampling with mutation'' procedure is somehow similar to a (genetic) Wright-Fisher model (see for instance \cite{etheridge2011some} for a review of genetic models). Since a ``mutation'' may or may not be accepted, it can be described as a \textit{biased Wright-Fisher model}. 

The collective aspect is twofold: first the proposal distribution allows large scale move on all the domain filled by the $N$ particles; then during the acceptance step, for a particle in $X$ and a proposal in $Y$ the acceptance ratio can be understood as a measure of discrepancy between the target ratio $\frac{\pi(Y)}{\pi(X)}$ and the observed ratio $\frac{K\star\hat{\mu}^N(Y)}{K\star\hat{\mu}^N(X)}$ between the (average) number of particles around $Y$ and the (average) number of particles around $X$. In the linear Metropolis-Hastings case with a symmetric random-walk kernel, the acceptance ratio only takes into account the target ratio. As a consequence, the acceptance probability of a proposal state depends not only on how ``good'' it is when looking at the values of $\pi$ but also on how many particles are (or are not) already around this proposal sate compared to the present state (and therefore on how accepting the proposal would improve the current global state of the system). 

\begin{rem}[Moderate interaction, part 1]\label{rem:moderateinteraction}
When $\sigma\to0$ we obtain the degenerate proposal distribution $\Theta_\mu(\dd y|x)=\mu(\dd y)$ (which does not satisfy the assumption $\Theta_\mu(\dd y|x)\in\pdf_0(E)$ in general). It would not make sense to take this proposal distribution at the particle level in Theorem \ref{thm:pathwiseestimate}. However, it makes sense to consider the case $\Theta_\mu(y|x)=~\mu(y)\dd y$ in the nonlinear kernel \eqref{eq:cmckernel} where $\mu\in\pdf_0(E)$. This degenerate proposal distribution still satisfies the assumptions of Theorem \ref{thm:convergencenonlineardiscrete} and could lead to a better rate of convergence (see Remark \ref{rem:optimalrate}). It is thus worth mentioning that this degenerate proposal distribution can also be obtained as the many-particle limit of a system of particles under an additional \textit{moderate interaction} assumption \cite{Oelschl_ger_1985,jourdain1998propagation,diez2019propagation}. See Remark~\ref{rem:moderateinteraction2} for additional details.
\end{rem}

\begin{center}
\begin{minipage}{1\linewidth}
\begin{algorithm}[H]
    \SetAlgoLined
    Draw uniformly $j\in\{1,\ldots,N\}$\;
    Draw $e\sim K$\;
    Set $Y=X^j+e$\;
\caption{Vanilla CMC proposal generation}
\label{algo:coker}
\end{algorithm}
\end{minipage}
\end{center}

\subsection{Markovian Mixture of Kernels Proposal (MoKA and MoKA-Markov)}\label{sec:moka}

\paragraph{Proposal distribution.}

A limitation of the Convolution Kernel Algorithm~\ref{algo:coker} is the fixed size of the interaction kernel. A remedy is given by the following collective proposal distribution which is a convolution with a mixture of kernels (with different sizes) with (potentially) nonlinear mixture weights: 
\[\Theta_\mu(\dd y|x) = \sum_{p=1}^P \alpha_p(\mu) K_p\star\mu(y)\dd y,\]
A possible choice for the weights is to take a solution of the following minimisation problem: 
\begin{equation}\label{eq:minprobmarmok}\min_{\alpha \in S^p} \int_E \phi\left(\frac{\frac{\sum_p \alpha_p K_p\star\mu(x)}{\int_E \sum_p \alpha_p K_p\star\mu(x')\mu(\dd x')}}{\frac{\pi(x)}{\int_E \pi(x')\mu(\dd x')}}\right)\frac{\pi(x)}{\int_E \pi(x')\mu(\dd x')}\mu(\dd x),\end{equation}
where $S_p$ denotes the $p$-simplex and where $\phi$ is convex non-negative such that $\phi(1)=0$. Typically $\phi(s)=s\log s-s+1$. In this case, it corresponds to minimising the Kullback-Leibler divergence between the probability distributions $\frac{\sum_p \alpha_p K_p\star\mu(x)\mu(\dd x)}{\int_E \sum_p \alpha_p K_p\star\mu(x')\mu(\dd x')}$ and $\frac{\pi(x)\mu(\dd x)}{\int_E \pi(x')\mu(\dd x')}$. Note that if $\mu$ is interpreted as the current knowledge (i.e. the distribution of the particles or of the associated nonlinear chain), then this minimisation problem can be understood as choosing the proposal distribution which is the closest to $\pi$ relatively to $\mu$. In our experiments, we found that choosing $\phi(s) = |1-s|$, leads to similar, slightly better results compared to $\phi(s) = s\log(s) - s +1$. Moreover, this choice is also numerically more stable so we chose to implement this version, that we call Markovian Mixture of Kernels (MoKA-Markov).

Another choice for the weights, which is \textbf{non-Markovian}, is to take $\alpha_p$ proportional to the geometric mean of the acceptance ratio of the particles which have chosen the kernel $p$ at the previous iteration. This method will be referred as Mixture of Kernels Adaptive CMC (MoKA). It shares similarities with the $D$-kernel algorithm of \cite{douc2007} and the arguments developed by the authors suggest that the two versions (Markovian and MoKA) may be asymptotically equivalent. The proof is left for future work.

\paragraph{Particle implementation.} Same as in Algorithm \ref{algo:coker} but with an additional step to choose a ``mutation kernel'' at each proposal step. The computation of the weights of the mixture can be done in a fully Markovian way at the beginning of each iteration before the proposal step or, in MoKA, are computed at the end of the iteration and used at the next iteration.  

In Section \ref{sec:experiments} we will show that this algorithm can favours initial exploration if initially the particles are in an area of low potential.

\begin{center}
\begin{minipage}{1\linewidth}
\begin{algorithm}[H]
    \SetAlgoLined
    Compute the weights $\alpha_1,\ldots,\alpha_P$ using \eqref{eq:minprobmarmok}\;
    Draw $p\in\{1,\ldots\,P\}$ with probability $(\alpha_1,\ldots,\alpha_P)$\;
    Draw uniformly $j\in\{1,\ldots,N\}$\;
    Draw $e\sim K_p$\;
    Set $Y=X^j+e$\;
\caption{Markovian Mixture of Kernels proposal generation}
\label{algo:marmok}
\end{algorithm}
\end{minipage}
\end{center}

\subsection{Kernelised Importance-by-Deconvolution Sampling (KIDS)}

\paragraph{Proposal distribution.}

Algorithms based on a simple convolution operator (Algorithms \ref{algo:coker} and \ref{algo:marmok}) keep a ``blind'' resampling step. In order to improve the convergence speed of such algorithms one may want to favour the selection of ``good'' states. For a fixed interaction kernel $K$, one can choose a proposal distribution of the form: 
\[\Theta_\mu(y|x) = K\star\nu_\mu(y),\]
where $\nu_{\mu}$ solves the following deconvolution problem with an absolute continuity constraint: 
\begin{equation}\label{eq:klkids}\min_{\nu\ll\mu}\int_E \log\left(\frac{K\star\nu(x)}{\pi(x)}\right)K\star\nu(x)\dd x.\end{equation}
That is, we are looking for a weight function $w\geq0$ which satisfies the constraint
\[\int_E w(x)\mu(dx)=1\]
and such that the measure defined by $\nu_\mu(A) = \int_A w(x)\mu(\dd x)$ minimises the KL divergence above. The function $w$ is the Radon-Nikodym derivative of $\nu$ with respect to $\mu$. In other words the proposal distribution focuses on the parts of the support of $\mu$ which are ``closer'' to $\pi$. Note also that this optimization procedure does not depend on the normalisation constant of $\pi$. Similarly to Algorithm \ref{algo:marmok}, the idea is to minimize a KL divergence between a set of proposal distributions and the distribution $\pi$. However, in the minimization problem \ref{eq:minprobmarmok} the goal was to optimize the choice of the convolution kernel $K$ among a given family while in the optimization problem \ref{eq:klkids}, the goal is to optimize the support of the distribution $\mu$ to focus on the parts which are ``closer'' to $\pi$. Finally, it should be noted that the two minimization procedures are not excluding and that both can be conducted at the same time.

\begin{rem}
Although the proposal distributions KIDS and MoKA-Markov perform well in practice, we did not manage to prove the regularity Assumption~\ref{assum:nonexpansive} because we do not know the regularity of the solutions of the minimization problems \eqref{eq:minprobmarmok} and \eqref{eq:klkids}. Since the weights $\alpha_p$ are bounded in \eqref{eq:minprobmarmok}, MoKA-Markov satisfies Assumption \ref{assum:lipschitz} but we did not manage to prove this assumption for KIDS for the same reason as before. However, Vanilla CMC satisfies all the assumptions since the proposal distribution is a simple convolution with a smooth kernel. 
\end{rem}

\paragraph{Particle implementation.}
In the case of an empirical measure $\hat{\mu}^N=\frac{1}{N}\delta_{X^i}$, the Radon-Nikodym weight function $w$ can simply be seen as a vector of $N$ weights $(w^1,\ldots,w^N)\in[0,1]^N$ such that $\sum_i w^i=1$ and the measure $\nu_{\hat{\mu}^N}$ is thus a weighted empirical measure: 
\begin{equation}\label{eq:weightedempiricalmeasure}\nu_{\hat{\mu}^N}:=\sum_{i=1}^N w^i \delta_{X^i}.\end{equation}
The deconvolution procedure gives more weight to the set of particles that are already ``well distributed'' according to $\pi$. These particles are thus more often chosen in the Wright-Fisher resampling step (see Algorithm \ref{algo:coker}). 

Note that although the weighted empirical measure proposal \eqref{eq:weightedempiricalmeasure} is very reminiscent of an Importance Sampling procedure, the computation of the weights here follows from a completely different idea. 

In practice we solve the deconvolution problem using the Richardson-Lucy algorithm \cite{Richardson:72,lucy1974iterative} (also known as the Expectation Maximisation algorithm). See for instance \cite[Section 5.3.2]{natterer2001mathematical} where it is proved that the iterative algorithm below converges towards a minimiser of the Kullback-Leibler divergence~\eqref{eq:klkids} in the case of an empirical measure $\mu$. Note that the computation of the weights (Richardson-Lucy loop) can be done before the resampling step. 

\begin{center}
\begin{minipage}{1\linewidth}
\begin{algorithm}[H]
    \SetAlgoLined
    Set $w^{(0)}_i=1$ for all $i\in\{1,\ldots,N\}$\;
    \For{$s=0$ to $S-1$}{
        For all $i\in\{1,\ldots,N\}$, update the weight by: $w_i^{(s+1)} = w_i^{(s)}\sum_{j=1}^N\frac{\pi(X^j_t)K(X^i_t-X^j_t)}{\sum_{k=1}^N w^{(s)}_k K(X^j_t-X^k_t)}$\;
        }
    Set $w_i=w_i^{(S)}$ for all $i\in\{1,\ldots,N\}$\;
    Normalize the weights $(w_1,\ldots,w_N)$\;
    Draw $j\in\{1,\ldots,N\}$ with probability $(w_1,\ldots,w_N)$\;
    Draw $e\sim K$\;
    Set $Y=X^j+e$\;
\caption{Kernel Importance-by-Deconvolution Sampling proposal generation}
\label{algo:kids}
\end{algorithm}
\end{minipage}
\end{center}

\subsection{Bhatnagar-Gross-Krook sampling (BGK)}

\paragraph{Proposal distribution.}
In Algorithms \ref{algo:coker}, \ref{algo:marmok} and \ref{algo:kids}, the proposal distribution is based on a (mixture of) symmetric kernels: this symmetry property is reflected in the proposal distribution and might not well represent the local properties of the target distribution. In dimension $d\geq2$, we can adopt a different strategy by sampling proposals from a multivariate Gaussian distribution with a covariance matrix that is computed locally. An example is given by the following proposal distribution:
\[\Theta_\mu(\dd y|x) = \left(\int_E G_{\widehat{\Sigma}_\mu(z)}(\widehat{m}_\mu(z)-y)\mu(\dd z))\right)\dd y,\]
where
\begin{equation}\label{eq:samplemean}\widehat{m}_\mu(z) = \frac{1}{\int_E K(z-z')\mu(\dd z')}\int_E K(z-z')z'\mu(\dd z'),\end{equation}
and
\begin{equation}\label{eq:samplecov}\widehat{\Sigma}_\mu(z) = \frac{1}{\int_E K(z-z')\mu(\dd z')}\int_E K(z-z') (z'-\widehat{m}_\mu(z))(z'-\widehat{m}_\mu(z))^\mathrm{T}\mu(\dd z'),\end{equation}
and where $K$ is a fixed interaction kernel. This proposal distribution and the associated transition operator are reminiscent of a Bhatnagar-Gross-Krook (BGK) type operator~\cite{bhatnagar1954model}.

In the particular case of $K(x,y)\equiv1$, we have a more simple algorithm which can be interpreted as a Markovian version of the Adaptive Metropolis-Hastings algorithm introduced by \cite{haario2001adaptive}. However such algorithm does not benefit from the appealing properties of local samplers. Indeed, when the target distribution is multimodal, we can take more advantageously as interaction kernel $K(x)\propto \1_{|x|<\sigma}$, where the threshold $\sigma$ allows the proposal to be adapted to the local mode. Note that moving across the modes is still possible thanks to the choice of another particle at the first step (see proposal algorithm below). The main issue is the choice of $\sigma$ that must be higher than the size of the modes but smaller than the distance between the modes.

\paragraph{Particle implementation.} Each particle, say $X^i_t$ samples another particle, say $X^j_t$. Then we compute the local mean and covariance around $X^j_t$ and we draw a proposal $Y^i_t$ for $X^i_t$ according to a Normal law with the locally computed parameters. As before it may be cheaper to compute and store all the local means and covariances before the resampling loop. 

\begin{center}
\begin{minipage}{1\linewidth}
\begin{algorithm}[H]
    \SetAlgoLined
    Draw $j\in\{1\,\ldots,N\}$ uniformly\; 
    Compute $\kappa=\sum_i K(X^i_t,X^j_t) $ \;
    Compute the local mean $\widehat{m}_{X^j}=\frac{1}{\kappa} \sum_{i}K(X^i_t,X^j_t)X^i_t$\;
    Compute the local covariance $\widehat{\Sigma}_{X^j}=\frac{1}{\kappa} \sum_{i} K(X^i_t,X^j_t)(X^i_t-\widehat{m}_{X^j_t})(X^i_t-\widehat{m}_{X^j_t})^\mathrm{T}$\;
    Draw $Y\sim \mathcal{N}(\widehat{m}_{X^j_t},\widehat{\Sigma}_{X^j_t})$\;
\caption{BGK proposal generation}
\label{algo:bgk}
\end{algorithm}
\end{minipage}
\end{center}

\section{GPU implementation}\label{sec:gpu}

A major bottleneck for the development of collective samplers based on interacting particles is the computational cost associated to the simulation of $N^2$ pair-wise interactions between particles at every time step. To overcome this issue, many methods have been proposed throughout the years, often using techniques borrowed from $N$-body simulations or molecular dynamics and developed since the 60's. Let us cite for instance the Verlet and cell-list methods for short-ranged interactions \cite{verlet1967computer,hockney2021computer,SIGURGEIRSSON2001766}, the super-particle and fast multipole methods \cite{barnes1986hierarchical,ROKHLIN1985187,GREENGARD1987325} for long-ranged interactions or the more recent Random Batch Method introduced by~\cite{JIN2020108877}. In the MCMC literature, such methods have been applied for instance to particle smoothing in \cite{klaas2006fast}.  

Most importantly, the last decade has seen the introduction of massively parallel GPU chips. Beyond the training of convolutional neural networks, GPUs can now be used to simulate generic particle systems at ever faster rates and, noticeably, without relying on numerical approximations, contrary to classical $N$-body simulation algorithms. One goal of the present paper is to discuss the consequences of this hardware revolution on Monte Carlo sampling, from the practitioner point of view. The use of GPU in this context traces back at least to~\cite{lee2010utility}. Although in this article, the authors have focused on traditional SMC and MCMC methods which are often embarrassingly parallelizable (with a linear complexity in $N$), it should be noted that, as such, GPU programming is not a trivial task. In particular, let us quote \cite{lee2010utility} on the CUDA language, which is the  development environment for the Nvidia GPU chips : \textit{``a programmer proficient in C should be able to code effectively in CUDA within a few weeks of dedicated study"}. In the present article, our goal is to advocate the use of very recent GPU routines developed for high-level languages (in particular Python) which allow the end-user to benefit from GPU acceleration without any particular knowledge or experience in the CUDA programming language. Using these techniques, our implementation offers a transparent Python interface for massively parallelized MCMC methods, including both the traditional methods (Metropolis-Hastings algorithm, SMC) which are linear in $N$ and the population based methods (CMC-related methods) with a quadratic footprint. For the latter methods, the main specific technical difficulties are briefly described below. 

From the evaluation of kernel densities to the computation of importance weights in the Richardson-Lucy loop (Algorithm \ref{algo:kids}), the bottleneck of the Collective Proposal framework presented in Section \ref{sec:collectiveproposals} is the computation of an off-grid discrete convolution of the form:
\begin{equation}
    a_i = \sum_{j=1}^N K(X_t^i,X_t^j)\, b_j \label{eq:kernel_product}
\end{equation}
for all $i$ between $1$ and $N$, with arbitrary weights $b_j$. All CMC-related methods have $\mathcal{O}(N^2)$ time complexity, with a constant that is directly related to the efficiency of the underlying implementation of the ``kernel sum'' \eqref{eq:kernel_product}.

In the machine learning literature, this fundamental operation is often understood as a matrix-vector product between an $N$-by-$N$ \emph{kernel matrix} $(K(X_t^i,X_t^j))$ and a vector $(b_j)$ of size $N$. Common implementations generally rely on linear algebra routines provided e.g. by the PyTorch library \cite{paszke2017automatic} and have a quadratic memory footprint: even on modern hardware, this prevents them from handling populations of more than $N = 10^4$ to $10^5$ samples without making approximations or relying on specific sub-sampling schemes \cite{yang2012nystrom}.

Fortunately though, over the last few years, efficient GPU routines have been developed to tackle computations in the mould of \eqref{eq:kernel_product} with maximum efficiency. These methods can be  accessed through the KeOps extension for
PyTorch \cite{pytorch}, NumPy \cite{numpy}, R \cite{r} and Matlab \cite{matlab}, that is developed by the third author \cite{keops,these_jean,keops_neurips} and freely available at
\url{https://www.kernel-operations.io}. In practice, this library supports arbitrary kernels on the GPU with a linear memory footprint, log-domain implementations for the sake of numerical stability and outperforms baseline PyTorch GPU implementations by one to two orders of magnitude.
Computing the convolution of \eqref{eq:kernel_product} with a cloud of $N = 10^6$ points in dimension $3$ takes no more than 1s on a modern chip. Pairwise interactions between populations of $N = 10^5$ samples may also be simulated in no more than 10ms, without making any approximation.

We run all the tests of Section \ref{sec:experiments} using a single gaming GPU, the Nvidia GeForce RTX 2080 Ti on the GPU cluster of the Department of Mathematics at Imperial College London. With $10^4$ to $10^6$ particles handled at any given time, our simulations run in a handful of seconds at most, with performances that enable the real-time sampling of large populations of \emph{interacting} samples. Our code is available on the third author's GitHub page. 

\section{Numerical experiments}\label{sec:experiments}

We now run experiments on several target distributions in low and moderately high dimension. 

Our code and its documentation are available online at 

\begin{center}
\url{http://www.kernel-operations.io/monaco}
\end{center}
and on the GitHub page of the third author. All our experiments can be run on Google Colaboratory (\url{colab.research.google.com}) with a GPU within a few seconds to a few minutes, depending on the method and number of independent runs.

\subsection{Methodology}

\subsubsection{Methods in competition}

Among the variants of our methods, we show the results for
\begin{itemize}
    \item The Vanilla CMC Algorithm \ref{algo:coker} (named CMC in the following) with a fixed interaction kernel $K$ which is taken equal to the indicator of the ball centered at zero with a given radius $R$. The quadratic cost of this algorithm is only due to the computation of discrete convolutions. 
    \item The Markovian version of the MoKA algorithm \ref{algo:marmok} with several interaction kernels and adaptive weights (named MoKA Markov in the following). In addition to the computation of the discrete convolutions, solving the optimization problem \eqref{eq:minprobmarmok} slightly increases the computational cost.
    \item The non-Markovian version of MoKA outline in Section \ref{sec:moka} (named MoKA in the following). Thanks to the simplified computation procedure of the mixture weights compared to MoKA Markov, this algorithm has the same computation cost as CMC.
    \item The hybrid algorithm which features both the adaptive interaction kernels of MoKA (non-Markovian) and an adaptive weighting of the particles obtained with Algorithm \ref{algo:kids}. It will be named MoKA KIDS in the following. It is a significantly heavier method than CMC, as it relies on the Richardson-Lucy iterations to optimise the deconvolution weights. In the experiments shown below, this algorithm is roughly five times slower than CMC in terms of computation time (but needs less iterations to converge). 
\end{itemize}

Please note that we do not include the BGK and KIDS samplers in these experiments: although we believe that the ideas behind these methods are interesting enough to justify their presentation, we observe that they generally do not perform as well as the other CMC variants and leave them aside for the sake of clarity.

For the competitors, we consider the three following algorithms. 
\begin{itemize}
    \item As a baseline, we include a parallel implementation of Metropolis-Hastings (named PMH in the following) with a number of parallel runs that is equal to the number of particles in CMC and its variants. 
    \item The Safe Adaptive Importance sampling (SAIS) algorithm from \cite{delyon2019adaptive}, which is one of the state-of-the-art importance sampling based methods. It is detailed in Section \ref{sec:linksis}. We note that unlike all the other Markovian methods, the memory and computational cost of SAIS significantly increases with the number of iterations since the number of particles is not constant -- to the best of our knowledge, no procedure has yet been proposed to remove particles with time. We have implemented the so-called mini-batch version as proposed in \cite{delyon2019adaptive} but without the sub-sampling procedure since the computation time is not a problem with our fast GPU implementation. The parameters (in particular the kernel sequence and mixing weights) are chosen according to the recommendation of the authors as detailed in Section \ref{sec:linksis}. 
    \item A simple (non-adaptive) Sequential Monte Carlo algorithm as described in Section \ref{sec:smc} with a linear tempering scheme for the potentials and a simple Metropolis-Hastings mutation kernel. 
\end{itemize}

We emphasize the fact that in our implementation, all the methods, including PMH and SMC which have a linear complexity in $N$, are parallelized on a GPU using PyTorch \cite{pytorch} and KeOps \cite{keops}. All the methods are therefore comparable in terms of computation time. 

\subsubsection{Initialization and target distributions}

We have chosen various target distributions in order to illustrate the main difficulties and to outline the strengths and weaknesses of the various methods in competition. As an illustration we start with a banana-shaped distribution in low dimension (Section \ref{sec:banana}) then we consider several variants of the classical Gaussian mixture example introduced by \cite{cappe2008adaptive} in moderately high dimension (up to dimension 13), see Sections \ref{sec:cappe} and \ref{sec:manygaussians}. 

Another example of target distribution is included in Appendix \ref{appendix:Cauchy} and we include results for the estimation of the normalizing constant of some of the target distributions below in Appendix~\ref{appendix:CMCIS}. Experiments in non-Euclidean spaces such as the Poincaré hyperbolic plane and the group of 3D rotation matrices are available online in our documentation.

We always clip distributions on the unit (hyper-)cube $[0,1]^d$. For all the methods we have considered two types of initialization.
\begin{itemize}
    \item A simple initialization where all the $N$ particles are sampled uniformly and independently in the unit hyper-cube. 
    \item A difficult initialization with $N$ particles independently distributed according to 
    \begin{equation}\label{eq:cornerinitial}
    X^i_0 = (0.9,\ldots,0.9)^T+0.1U^i_0
    \end{equation}
    where $U^i_0\sim\mathcal{U}([0,1]^d)$, that is all the particles are located in a small corner of the domain. 
\end{itemize}

\subsubsection{Energy Distance}

We compare the results in term of Energy distance \cite{rizzo2016energy} between a true sample of the target distribution and the population of particles at each step. 

The Energy Distance (ED) between two probability distributions $\mu,\nu$ in $\R^d$ can be defined by several equivalent formulas. Perhaps, the simplest form is 
\[\mathcal{E}(\mu,\nu) = \E|X-Y| - \frac{1}{2}\E|X-X'| - \frac{1}{2}\E|Y-Y'|, \]
where $X,X'\sim \mu$ and $Y,Y\sim \nu$ are independent. In the machine learning literature, it is often understood as a kernel norm comparable to other popular distances such as the Wasserstein distance \cite{feydy2020geometric,ziel2021energy}. This interpretation comes from the formulation 
\[\mathcal{E}(\mu,\nu) = \frac{1}{2} \langle \mu-\nu, K\star(\mu-\nu)\rangle,\]
where $K(x,y) = -|x-y|$ and where we recall that the convolution operator is defined by $K\star\mu(x) := \int_{\R^d} K(x,y)\mu(\dd y)$. Among all the possible kernels, this particular choice is the simplest one which ensures that the distance between two Dirac masses is equal to the distance between the points. Finally, by writing this latter expression in the Fourier space, the ED can also be understood as a weighted $L^2$ distance between the characteristic functions:
\[\mathcal{E}(\mu,\nu) = C_d\int_{\R^d} \frac{|\varphi_\mu(z)-\varphi_\nu(z)|^2}{|z|^{d+1}}\dd z,\]
for a constant $C_d>0$ and where $\varphi_\mu(z) = \E_{X\sim \mu}[\e^{iz\cdot X}]$ is the characteristic function of the probability measure $\mu$. Note also that in dimension 1, the Energy Distance is equal to the Cram\'er's squared $L^2$ distance between the cumulative distribution functions $F_\mu$ and $F_\nu$ of $\mu$ and $\nu$ :
\[\mathcal{E}(\mu,\nu) = \int_\R |F_\mu(z)-F_\nu(z)|^2\dd z.\]
It can be shown that the ED satisfies all the properties of a metric and in particular $\mathcal{E}(\mu,\nu)=0$ if and only if $\mu=\nu$. 

The Energy Distance can serve as a baseline to test the null hypothesis that two random vectors $\mathcal{X} = (X_i)_{i\in\{1,\ldots,n\}}$, $X_i \in \R^d$ and $\mathcal{Y} = (Y_j)_{j\in\{1,\ldots,m\}}$, $Y_j \in \R^d$ are sampled from the same distribution. For this, we simply compute the ED between the associated empirical measures $\hat{\mu}_{\mathcal{X}} = \frac{1}{n}\sum_{i=1}^n \delta_{X^i}$ and $\hat{\mu}_{\mathcal{Y}} := \frac{1}{m}\sum_{j=1}^m \delta_{Y^j}$, namely:  
\begin{align*}
\mathcal{E}_{n,m}(X,Y) &:= \mathcal{E}(\hat{\mu}_{\mathcal{X}},\hat{\mu}_\mathcal{Y}) \\
&= \frac{1}{nm} \sum_{i,j} | X_i - Y_j | - \frac{1}{2n^2} \sum_{i,j} | X_i-X_j | - \frac{1}{2m^2} \sum_{i,j}| Y_i -Y_j |.\end{align*}

It is shown in \cite{szekely2004testing, rizzo2016energy} that this quantity can serve as a test statistic for equal distributions which implies in particular that $\mathcal{E}_{n.m}$ tends to 0 as $n,m\to+\infty$ when $\mathcal{X}$ and $\mathcal{Y}$ are i.i.d. samples.

In order to measure the quality of the various samplers, we will compare the ED between the output samples and a true sample of the target distribution, both of size $N$. We consider that the sampler successfully manage to sample the target distribution when the final samples fall within the $90\%$ prediction interval (computed with 1000 independent observations) around the average Energy Distance between two i.i.d. exact samples of size $N$. It means that the sample cannot be distinguished from a true sample.

\subsection{Banana-shaped distribution}\label{sec:banana}

As a first illustrative example in dimension 2, we consider a mixture of three (equally weighted) Gaussian distributions and a banana-shaped distribution, as introduced in \cite{haario1999adaptive,delyon2019adaptive}. The banana-shaped distribution is a classical test case for sampling algorithms. It is often considered difficult to sample from this distribution due to its curved, oblong shape. In dimension~2, however, it is easy to find an appropriate range of parameters such that all the methods including PMH, SAIS, SMC or a standard rejection sampler achieve very good results within a computation time which does not exceed a few seconds. Thus we mainly use this introductory example as a qualitative illustration of the behaviour of the collective algorithms CMC, MoKA, MoKA Markov and MoKA KIDS that we simply compare to PMH. In dimension 2, it is indeed possible to directly visualize the results for these algorithms, as shown in in Figure \ref{fig:banana_final} and in the supplementary videos available on the GitHub page of the project. More in-depth quantitative comparisons on much more difficult test cases in higher dimensions will be presented in the following sections. 

\begin{figure}[ht]
    \centering
    \subfloat[CMC without exploration]{\includegraphics[width=0.24\textwidth]{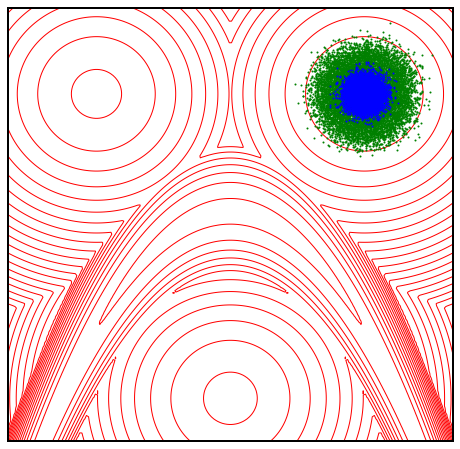}\label{subfig:banana_final_CMC_withoutexp}}
    \subfloat[PMH]{\includegraphics[width=0.24\textwidth]{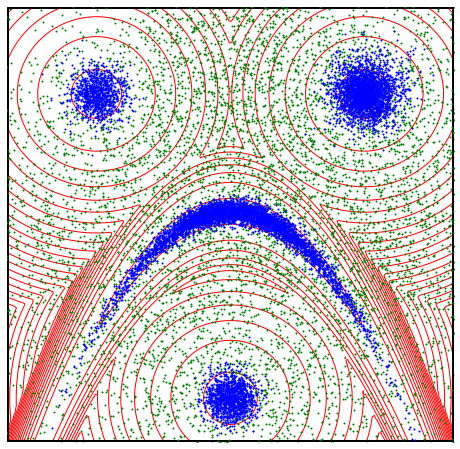}\label{subfig:banana_final_PMH}}
    \subfloat[CMC with exploration]{\includegraphics[width=0.24\textwidth]{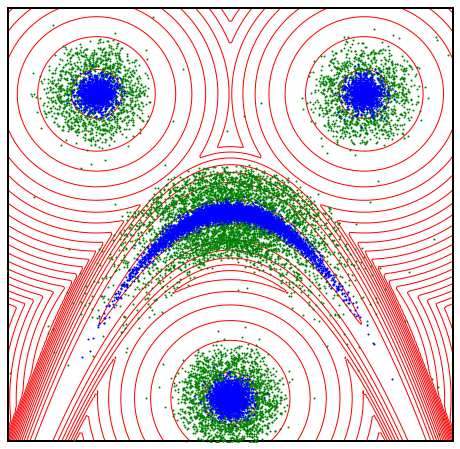}\label{subfig:banana_final_CMC}}
    \subfloat[MoKA Markov]{\includegraphics[width=0.24\textwidth]{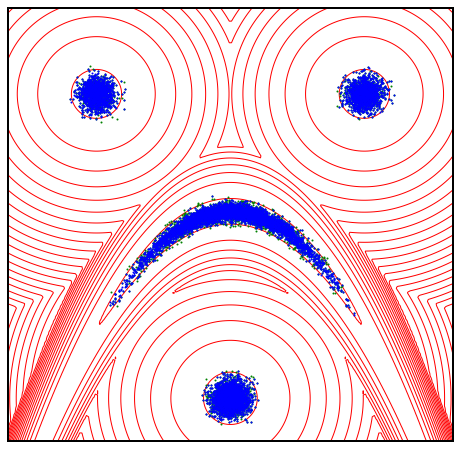}\label{subfig:banana_final_MoKAMarkov}}
    \caption{The level sets of the target distribution (red) and the final states of different algorithms after 50 iterations. The particles are displayed in blue and the rejected proposals (at the previous iteration) in green. There are $N=10^4$ particles. (a) Vanilla CMC without an exploration proposal and a too small proposal distribution. The particles concentrate on the top-right mode and cannot escape it because the proposals cannot reach the other modes. (b) PMH with a large proposal distribution reduces to a rejection sampler (c) CMC with a large exploration proposal. Although the target distribution is correctly sampled, many proposals in green are never accepted. (d) MoKA Markov with a mixture of small and large proposal distributions. The final state for MoKA and MoKA KIDS is similar. Thanks to the automatic choice of a small interaction kernel, most of the proposals are accepted.}
    \label{fig:banana_final}
\end{figure}

Starting from the initialization \eqref{eq:cornerinitial}, the particles are initially very close to one mode and if the radius of the interaction kernel is taken too small, then the particles will stay in this mode forever, as it can be seen in Figure \ref{subfig:banana_final_CMC_withoutexp}. It is thus necessary to add an exploration mechanism. For the three MoKA variants, this can be done automatically by simply adding a large proposal distribution within the proposal mixture. Alternatively, it is possible to consider a simple upgraded version of the Vanilla CMC algorithm \eqref{eq:vanillacmcproposal} by taking the proposal distribution 
\[\Theta_\mu(\dd y|x) = (1-\varepsilon)K\star\mu(y)\dd y +\varepsilon Q(\dd y|x), \]
where the exploration proposal $Q$ is for instance $Q(\dd y|x) \propto \exp(-\frac{|y-x|^2}{2\sigma^2})$ with $\sigma^2$ large. In the remaining of this section, we will always consider this proposal (and simply call it CMC) with $\varepsilon=0.01$, $\sigma = 0.5$ and the radius of the interaction kernel $K$ equal to 0.1. For the MoKA algorithms, we consider a mixture of proposal distributions of sizes 0.01, 0.03, 0.1 and 0.5. As a control system, we consider PMH with a mutation kernel of size 0.5. 

The results are visualized in dimension 2 in the supplementary videos available on the GitHub page of the project. We summarize below the main observations. 

\begin{itemize}
    \item In PMH (Figure \ref{subfig:banana_final_PMH}), due to the large mutation kernel, blind proposals are made all over the space and the algorithm behaves essentially like a rejection algorithm. Consequently, the acceptance rate is very low, about $3\%$. Note that PMH with a too small mutation kernel would lead to the same situation as shown in Figure \ref{subfig:banana_final_CMC_withoutexp}.
    \item In CMC, the particles move collectively like a coherent swarm. They initially massively concentrate on the first mode that they find (on the top-right corner). Thanks to the large exploration proposal, a few particles are able to find the other modes. These well-located particles then slowly attract the other ones and the correct balance between the different modes is progressively attained. At the end of the simulation, there are still many proposals which are not accepted because the interaction kernel is too large. The final acceptance rate is about $16\%$. 
    \item In MoKA Markov, a very large exploration proposal is initially selected (see Figure~\ref{subfig:banana_weights_MoKAMarkov}) and similarly to PMH, the particles are quickly scattered all over the space. However, unlike PMH, once particles are present in all the modes, a better adapted small interaction kernel is chosen (see Figure~\ref{subfig:banana_weights_MoKAMarkov}) and similarly to CMC, the balance between the modes is progressively but much more quickly attained. The final acceptance rate is about $90\%$. 
    \item In MoKA, a nontrivial mixture of medium and small proposal distributions is kept throughout the simulation (see Figure~\ref{subfig:banana_weights_MoKA}). However, we note that the larger ones are used more often at the beginning of the simulation, which demonstrates the ability of the algorithm to automatically explore the space. The final acceptance rate is about $80\%$. 
    \item In MoKA KIDS, the behaviour is similar to MoKA except that the badly-located particles are very quickly attracted by better-located ones so that the initial phase is much faster. The final acceptance rate is about $70\%$. 
\end{itemize}

\begin{figure}[ht]
    \centering
    \subfloat[MoKA]{\includegraphics[width=0.35\textwidth]{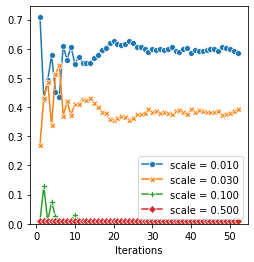}\label{subfig:banana_weights_MoKA}}
    \subfloat[MoKA Markov]{\includegraphics[width=0.35\textwidth]{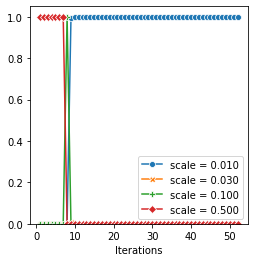}\label{subfig:banana_weights_MoKAMarkov}}
    \caption{Weights of the different proposal distribution in MoKA (a) and MoKA Markov (b). MoKA KIDS is not shown but is similar to MoKA.}
\end{figure}

\subsection{Gaussian mixtures}\label{sec:cappe} 

\subsubsection{Target distribution}

As a first challenging target distribution, we revisit the classical test case introduced in \cite{cappe2008adaptive} and consider a mixture of two Gaussian distributions in dimension $d=12$: 
\[\pi = w_1 \mathcal{N}(m + \mu_d,\sigma_d^2 I_d) + w_2 \mathcal{N}(m - \mu_d,\sigma_d^2 I_d),\]
where $m=(0.5,\ldots,0.5)^\mathrm{T}$ is the center of the domain $[0,1]^d$. The target distribution is parametrized $\mu_d\in \R^d$, $\sigma_d>0$ and the weights $w_1,w_2>0$. We will always consider the standard deviation
\[\sigma_d = \frac{1}{2} \sqrt{\frac{0.4}{d}}.\]
We will then consider two cases. 
\begin{enumerate}
    \item The example considered in the seminal article \cite{cappe2008adaptive} is given by
    \begin{equation}\label{eq:cappe_simple}w_1 = w_2 = \frac{1}{2},\quad \mu_d = \frac{1}{4\sqrt{d}}(-1,1,\ldots,1)^\mathrm{T}.\end{equation}
    \item A slightly more difficult example is given by
    \begin{equation}\label{eq:cappe_difficult}w_1 = 0.25,\quad w_2 = 0.75,\quad \mu_d = \frac{1}{8}(-1,1\ldots,1)^\mathrm{T}.\end{equation}
\end{enumerate}

The second example is more difficult because the distance between the two modes is larger and the two modes are not equally weighted. 

\subsubsection{Initialization and parameters}

All the algorithms are initialized according to the difficult case \eqref{eq:cornerinitial} where all the particles initially lie in a corner of the domain. Note also that the particles are initially closer to the first mode. 

The kernel size in CMC is taken equal to 0.25. For the three MoKA variants, the proposal distribution is made of a mixture of three kernels with sizes 0.25, 0.4 and 0.55. A detailed discussion on how to choose these parameters is postponed to Section \ref{sec:tuningCMC}. For PMH, SAIS and SMC, we have tested all the kernel sizes between 0.05 and 0.8 with step 0.05 and we show the results for the best ones. For the simpler example \eqref{eq:cappe_simple}, the best kernel size for PMH is 0.25, for SAIS it is 0.4 and for SMC it is 0.3. For the more difficult example \eqref{eq:cappe_difficult}, the best kernel size for PMH is 0.8, for SAIS it is 0.5 and for SMC it is 0.4. Regarding the other parameters, the ESS threshold in SMC is set to $0.75N$, there are 150 MH steps between two resampling steps and a linear tempering scheme with 25 steps is used from the initial distribution to the target distribution. For SAIS, the sequence of kernel bandwiths and mixture weights are taken according to the recommendations of the authors (see Appendix \ref{sec:linksis}). We take $N=10^5$ particles in CMC and its variants, PMH and SMC. The number of particles added to the system at each iteration in SAIS is such that the system has $N$ particles at the end of the burn-in phase.  

\subsubsection{Results}\label{sec:cappe_results_GPUtime}

For each method in competition, we have launched 50 independent simulations. For each simulation, we have allowed a fixed GPU computation time of 60 seconds (we recall that all the methods are parallelized and thus comparable in terms of computation time). The best results are shown for the different methods in competition in dimension $d=12$ in Figure \ref{fig:cappeexample} below. 

\begin{figure}[ht]
    \centering
    \subfloat[Simple]{\includegraphics[scale=0.22]{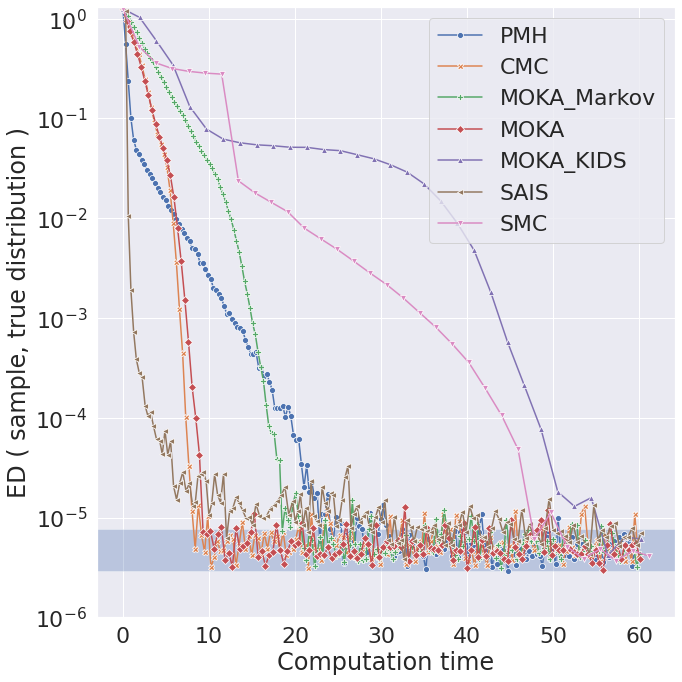}}
    \subfloat[Difficult]{\includegraphics[scale=0.22]{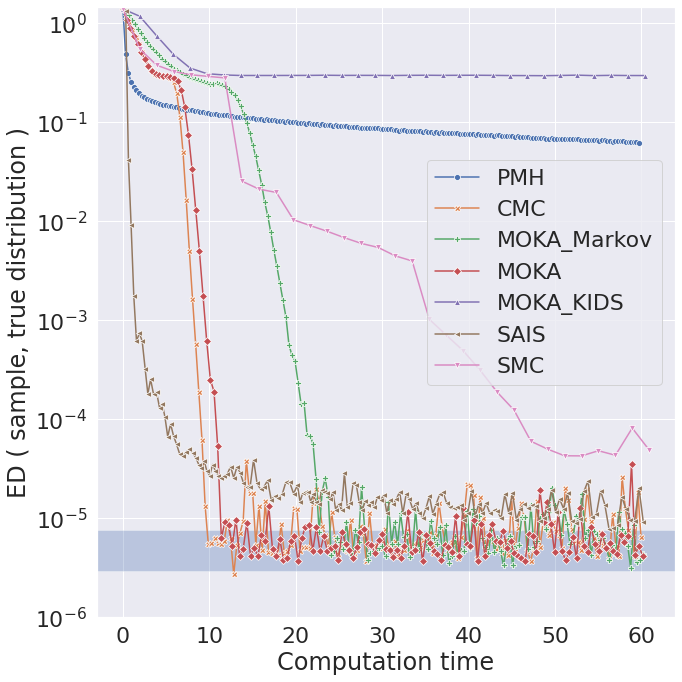}}
    \caption{The best runs among 50 independent runs for the different algorithms in the simple case (a) and the difficult case (b). The blue shaded region is the 90\% prediction interval of the energy distance between two true samples.}
    \label{fig:cappeexample}
\end{figure}

For the simpler example \eqref{eq:cappe_simple}, all the methods achieve excellent results, including PMH even though it is about three times slower than CMC. The adaptive methods MoKA Markov and MoKA KIDS also perform well but they suffer from a higher computational cost and they are thus slower than CMC on this example. MoKA has a computational cost comparable to the one of CMC and thus achieves equivalently good results. Note also that SMC is disadvantaged compared to the other methods in terms of computation time since due to the tempering scheme, this method effectively samples $\pi$ only during the few last iterations. SAIS is initially the fastest method but the convergence then slows down since the computational cost increases with the iterations. Moreover, on~50 independent experiments, SAIS failed to converge in about 30\% cases due to an early degeneracy of the particle weights which causes an over-concentration on only one mode. This is in contrast with all the other methods for which we have observed a very low variance in the results.

For the more difficult example, PMH fails to converge for any of the kernel sizes that we have tried. This situation can be expected because on the one hand, if the mutation kernel is small then the particle can never leave the first mode that they find and on the other hand if the mutation kernel is too large then the proposals are almost never accepted. This flaw is corrected in CMC and its MoKA variants by incorporating interactions between the particles and an adaptive choice of kernels. In particular, CMC, MoKA and MoKA Markov do not seem affected by the increased difficulty. Only MoKA KIDS fails to converge due to an early degeneracy of the particle weights which prevents them to leave the first mode. SAIS also has the same flaw as in the previous case. The performance of SMC are slightly deteriorated compared to the previous case but still achieves good results in most cases. 

\subsubsection{Comparison of the collective methods}
 
In this section we compare the behaviours of the collective algorithms CMC, MoKA, MoKA Markov and MoKA KIDS. In the results below we show the convergence in terms of number of iterations rather than in terms of computation time. This is to illustrate that the collective proposals are designed to achieve better efficiency at each iteration (at the cost of a possibly increased computation time). We include PMH in the comparison as a control system. In Figure \ref{fig:cappeexample_iter}, we show the results for the two examples \eqref{eq:cappe_simple} and \eqref{eq:cappe_difficult} during 100 iterations and for 10 independent runs. 

\begin{figure}[ht]
    \centering
    \subfloat[Simple]{\includegraphics[scale=0.22]{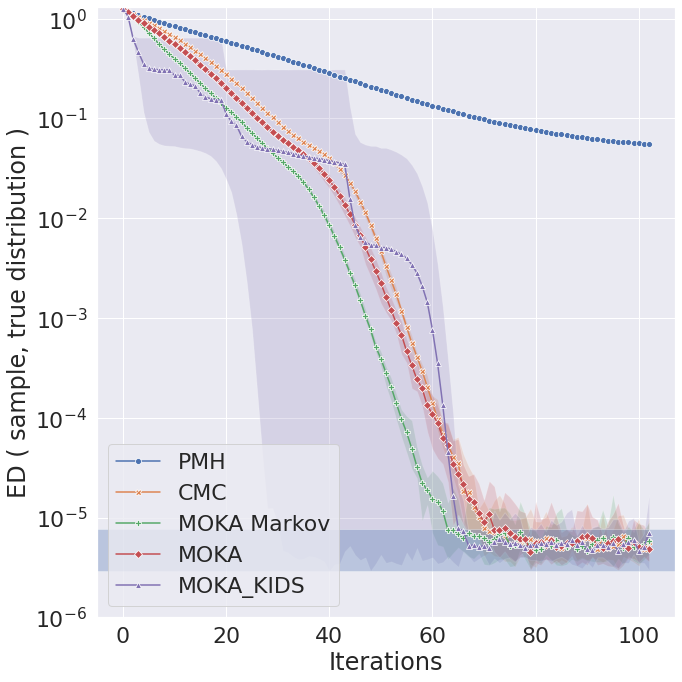}}
    \subfloat[Difficult]{\includegraphics[scale=0.22]{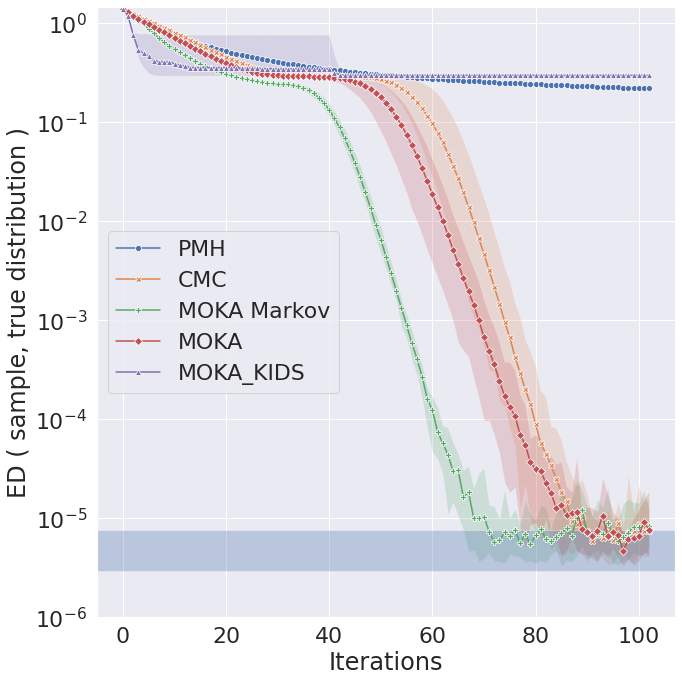}}
    \caption{Mean error for 100 iterations of the Collective algorithms and PMH and min-max envelope for 10 independent runs ($100\%$ prediction interval). The horizontal blue shaded region is the 90\% prediction interval of the energy distance between two true samples.}
    \label{fig:cappeexample_iter}
\end{figure}

A first observation is that, except for MoKA KIDS, the different methods typically show a very small variance in the results. Thanks to the propagation of chaos result and with a very large number of particles, the system is indeed expected to evolve according to the deterministic recurrence relation \eqref{eq:recmu}.  
It should also be noted that in all cases, the MoKA algorithms (except MoKA KIDS in the difficult case) need less iterations to converge. This shows that choosing adaptive kernels indeed increases the efficiency per iteration. Of course, it comes at the price of an increased computation cost. In this specific example and considering the results presented in Section \ref{sec:cappe_results_GPUtime}, this increased efficiency does not appear necessary but in the following Section \ref{sec:manygaussians},  we will investigate a situation where the MoKA algorithms perform better than CMC also in terms of computation time. Still regarding the efficiency, the collective methods including CMC are clearly better than PMH (in the simpler example, PMH needs about 2000 iterations to converge). This can be seen by comparing the acceptance rate in Figure \ref{fig:cappeexample_acceptweights}. In particular, all collective methods have an acceptance rate above 0.4 while the acceptance rate for PMH drops to almost zero in the difficult case. 

In Figure \ref{fig:cappeexample_acceptweights}, we can observe that the choice of the kernel weights is different in MoKA Markov compared to MoKA and MoKA KIDS. In the former case, the algorithm first chooses almost exclusively a large exploration proposal. Then, as the particles get closer to the target distribution, the proposal kernel becomes smaller. For the other methods, a nontrivial mixture of small and medium kernels is kept during the whole simulation. This behaviour is similar to the one observed on the banana-shaped target distribution in Section \ref{sec:banana}.

Finally, the convergence plot (Figure \ref{fig:cappeexample_iter}) of the collective methods are roughly piecewise linear indicating an exponential convergence. More precisely, for CMC, MoKA and MoKA Markov we can identify two straight lines and a plateau in the difficult case. The first phase (up to iteration 30$\sim$40) corresponds to the colonization of the first mode. The following plateauing behaviour in the difficult case corresponds to a phase where the particles are almost exclusively distributed in the first mode. But as more particles enter the second mode, the Energy Distance to the target distribution decreases and a last exponentially fast converging phase begins. The duration of this plateauing exploration phase can vary, this is why a larger variance is observed for the last phase in the difficult example \eqref{eq:cappe_difficult} compared to the simpler case \eqref{eq:cappe_simple}. MoKA KIDS is less stable but for the simple example, it sometimes needs three times less iterations to converge compared to the other methods. Moreover, unlike the other methods, on its convergence plot (Figure~\ref{fig:cappeexample_iter}), we can observe several plateauing phases interspersed with sudden drops. These drops corresponds to the times where a well-located particle gains most of the weight and suddenly attracts many other particles around itself. Similarly to the importance sampling based methods, this behaviour can be very efficient but also dangerous as it may impair the exploration of the space. This leads to a clear failure in the difficult example.

\begin{figure}[ht]
    \centering
    \subfloat[Acceptance]{\includegraphics[width=0.24\textwidth]{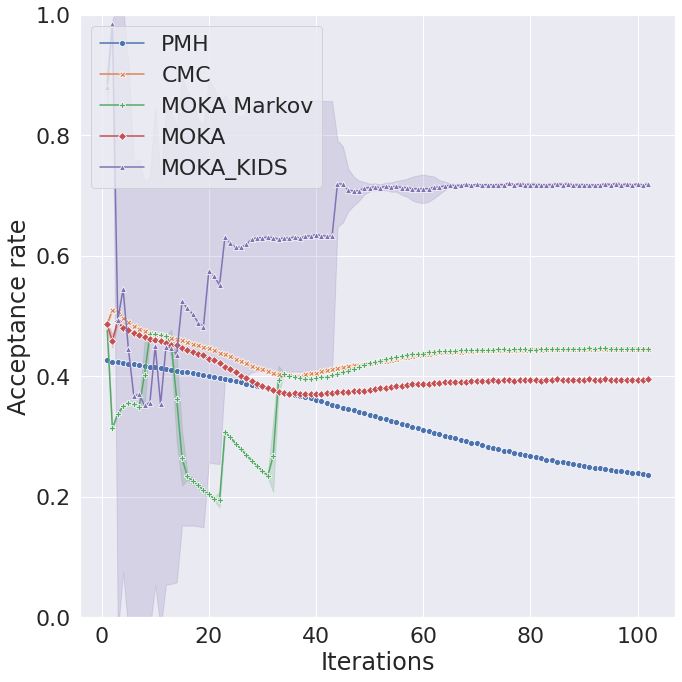}}
    \subfloat[MoKA Markov]{\includegraphics[width=0.24\textwidth]{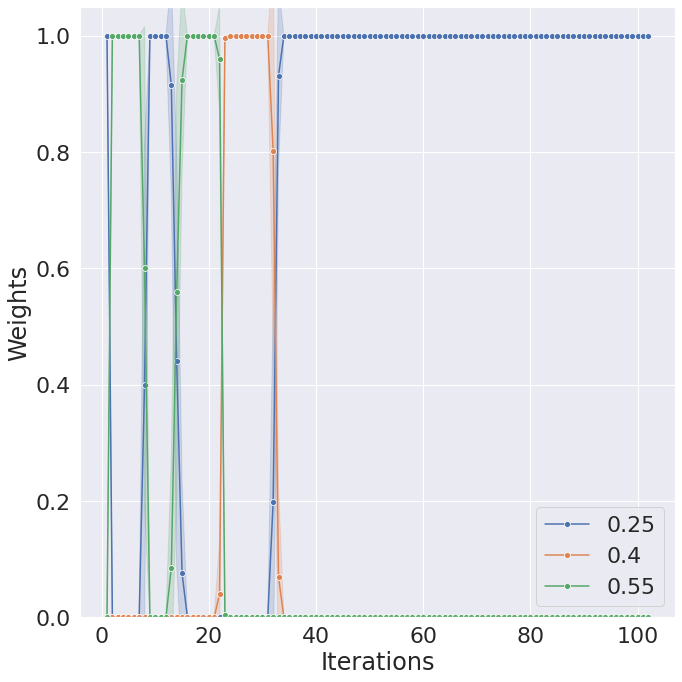}}
    \subfloat[MoKA]{\includegraphics[width=0.24\textwidth]{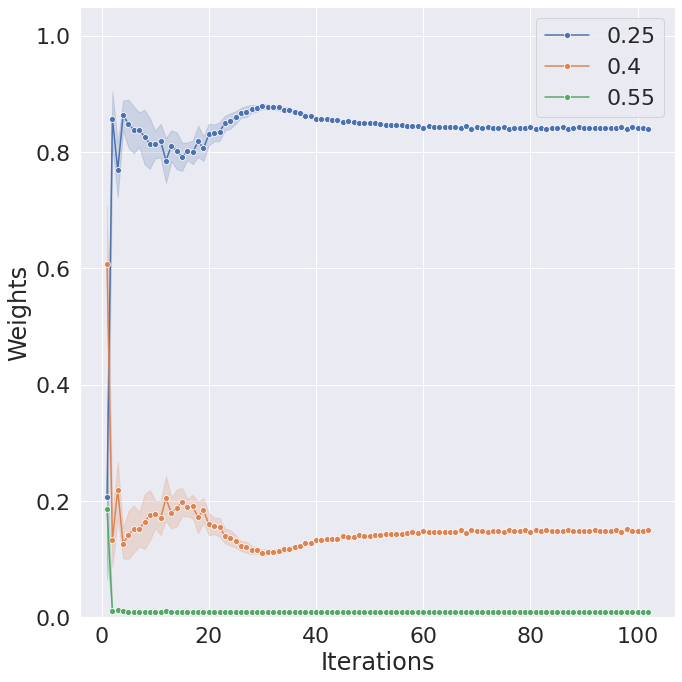}}
    \subfloat[MoKA KIDS]{\includegraphics[width=0.24\textwidth]{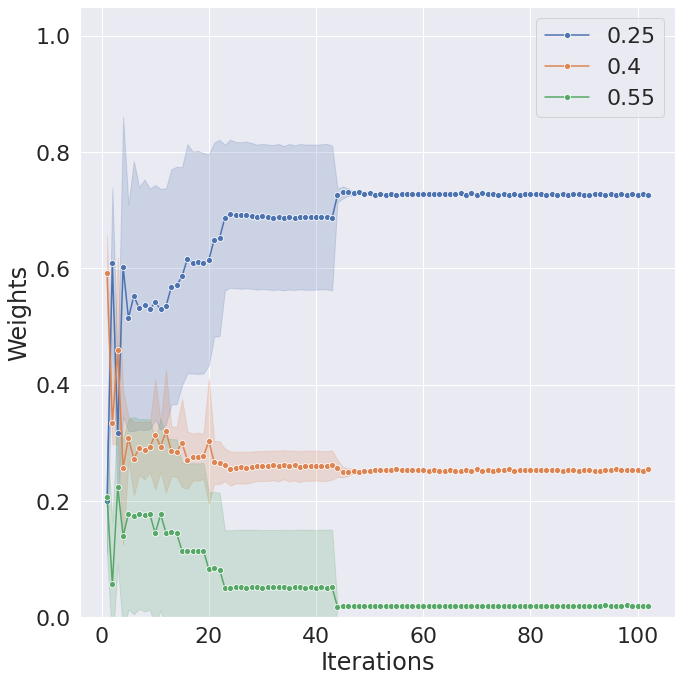}} \\ 
    \subfloat[Acceptance]{\includegraphics[width=0.24\textwidth]{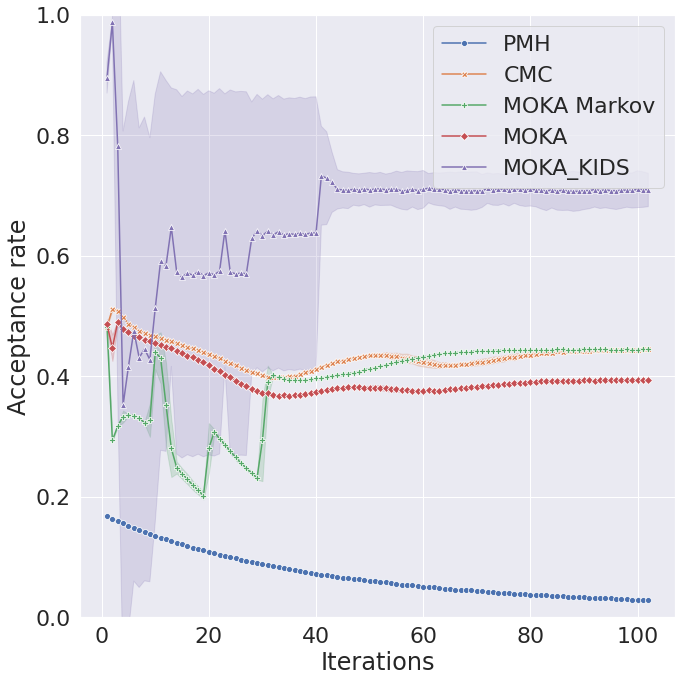}}
    \subfloat[MoKA Markov]{\includegraphics[width=0.24\textwidth]{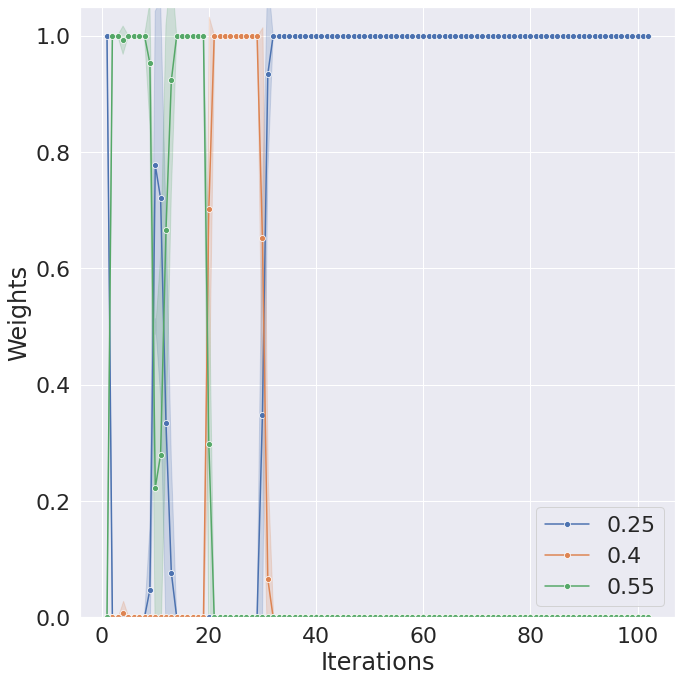}}
    \subfloat[MoKA]{\includegraphics[width=0.24\textwidth]{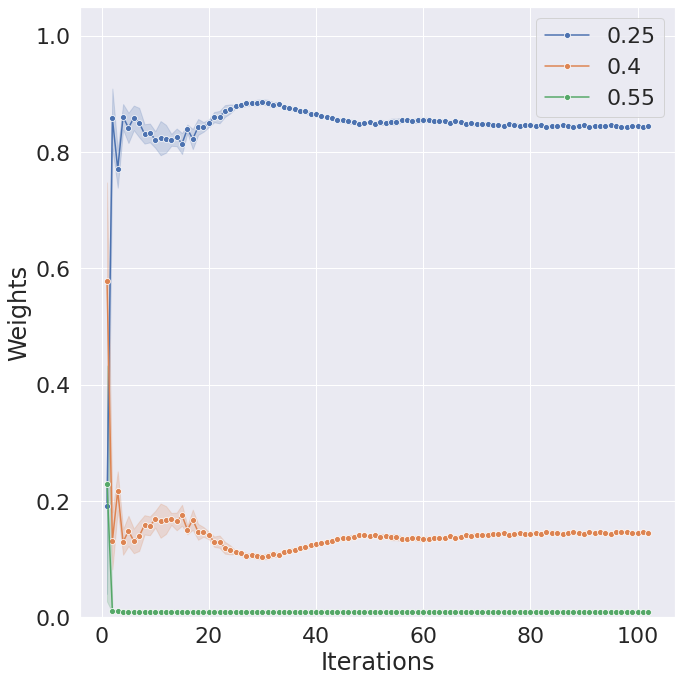}}
    \subfloat[MoKA KIDS]{\includegraphics[width=0.24\textwidth]{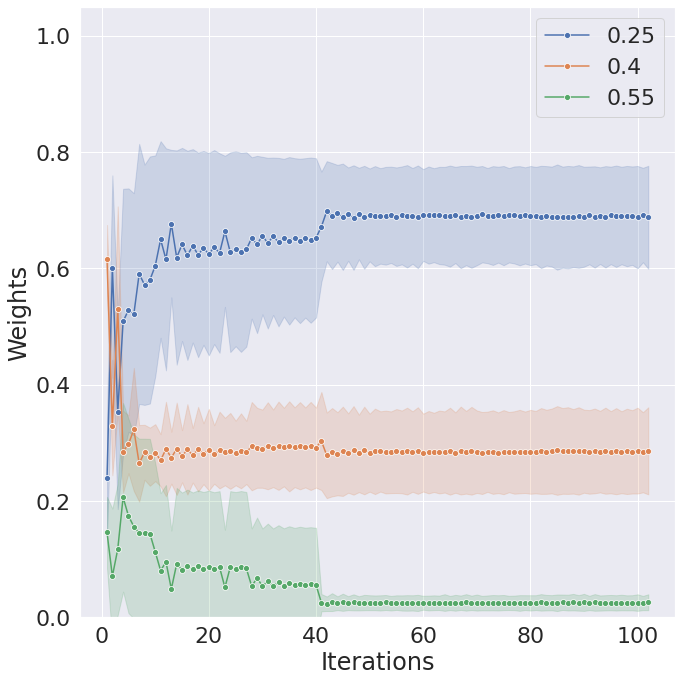}}
    \caption{Mean acceptance rate and weights of the kernels over 100 iterations and standard deviations over 10 independent runs for the simple example ((a) to (d)) and for the difficult example ((e) to (h)).}
    \label{fig:cappeexample_acceptweights}
\end{figure}

\subsection{Tuning the hyper-parameters}\label{sec:tuningCMC}

As any of the other sampling methods discussed, the collective algorithms do require some appropriate tuning of the hyper-parameters. The simplest Algorithm \ref{algo:coker} has two hyper-parameters, the number of particles and the size $R$ of the interaction kernel $K$, which we take equal to the indicator of the ball centered at zero with radius $R$. As for the number of particles, it should simply be taken as large as possible, up to the computational capability. Note that with our implementation, one can easily handle samples of size $N\sim 10^5$ with a computation time of just a few seconds in the previous examples. The size of the interaction kernel is the most important parameter to tune. Although the convergence of the associated nonlinear Markov chain is ensured regardless of the size of $K$, since the algorithm uses a mean-field approximation with a finite number of particles, it is important to make sure that the particles interact in a mean-field regime. A simple criteria is simply to count the average number of neighbouring particles within the interaction radius around the proposals $(Y^i_t)^{}_i$. This can be computed at no additional cost as it is simply a multiple of $K\star\hat{\mu}^N_t(Y^i_t)$. As a rule of thumb, particles need to interact with at least a few tens of neighbours. In our observations, the better results are obtained with $100\sim200$ neighbours in average and is not improved with more neighbours. In particular increasing the interaction kernel to a very large size will naturally increase the number of neighbours but it may also deteriorate the performances as the acceptance rate will drop (which means that more iterations will be needed to converge). Both the number of neighbours and the acceptance rate can be easily monitored to ensure good convergence properties. This is illustrated in Figure \ref{fig:severalsizesK} on the previous 12-dimensional simple example \eqref{eq:cappe_simple}. Note that this only parameter to tune is in contrast with the other methods, in particular SAIS and SMC where the performances can also be strongly affected by the tempering scheme, the mixing weights sequence, the choice of the threshold value of the ESS, the number of mutation steps, the choice of the mutation kernel etc. All these hyper-parameters are subtle to tune in practice. Finally we also mention that although we did not explore this direction and leave it for future work, one may also consider adaptive versions of the Algorithm \ref{algo:coker} (and its variants) where the size of the kernel is chosen in order to ensure a good balance between a good acceptance rate and a reasonable average number of neighbours. However, the convergence properties would likely be theoretically difficult to analyse as the number of neighbours is, by definition, not a mean-field quantity and the limit $N\to+\infty$ would therefore not make sense. 

\begin{figure}[ht]
    \centering
    \includegraphics[scale=0.3]{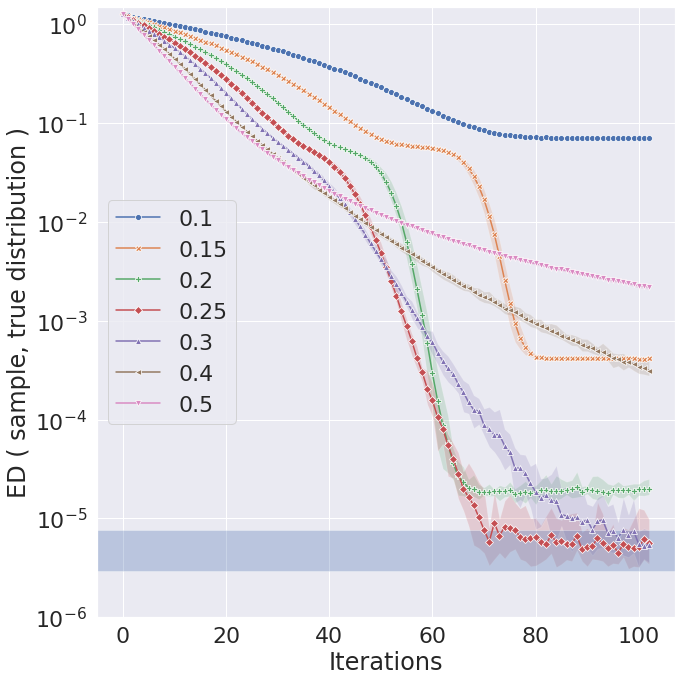}
    \caption{CMC for several sizes of the interaction kernel. Mean energy distance over 10 independent runs and min-max envelope}
    \label{fig:severalsizesK}
\end{figure}

\begin{table}[ht]
    \centering
    \begin{tabular}{cccccccc}
         Size of the interaction kernel &  0.1 & 0.15 & 0.2 & 0.25 & 0.3 & 0.4 & 0.5\\
         Mean error & 7e-02 & 4.1e-04 & 1.9e-05 & \textbf{5.4e-06} & 7.3e-06 & 4e-04 & 2.5e-03\\
         Mean number of neighbors & 2 & 6 & 36 & 194 & 637 & 2137 & 3460 \\
         Mean acceptance rate & 0.72 & 0.63 & 0.57 & 0.44 & 0.30 & 0.11 & 0.05
    \end{tabular}
    \caption{Mean error, mean number of neighbors and mean acceptance rate computed over the ten last iterations of ten independent runs for each size of the interaction kernel.}
\end{table}

Finally, let us note that it is always safe to choose a large interaction kernel: in the maximal case where the kernel embraces all the domain, then the Vanilla CMC Algorithm \ref{algo:coker} simply reduces to a standard random-walk MH algorithm with a proposal uniformly sampled in the domain. Although it may converge too slowly to be of practical use (it keeps a quadratic complexity in $N$), it is theoretically ensured to converge. On the other hand choosing a too small interaction kernel may lead to bad behaviours. In particular, we have observed that when a particle interact with only a few neighbours, approximately less than $10\sim20$, then the system may locally over-concentrate (i.e. there are localized high-density regions) and fail to converge towards the target distribution. This can be seen on Figure \ref{fig:severalsizesK} where the smallest kernels lead to a plateauing behaviour far from the minimal error. This bad behaviour can nevertheless be easily monitored: in practice and in particular for the adaptive MoKA samplers, it is recommended to pay attention to the average number of neighbours and to either eliminate the interaction kernels which lead to a too small number of neighbours or to increase the number of particles. Since the volume of a ball of radius $r$ behaves as $r^d$ in dimension $d$, an unreasonably large number of particles or size of interaction kernel would be needed in very high-dimensional setting. This is why our collective algorithms cannot be used when $d$ is too large. In our examples, we have observed good results up to dimension 13 (with $10^5$ particles).

\subsection{More complex example}\label{sec:manygaussians}

\subsubsection{Target distribution}

In the classical test case introduced in \cite{cappe2008adaptive,delyon2019adaptive} and considered in Section \ref{sec:cappe}, the target distribution is made of only two equally weighted Gaussians regardless of the dimension. Even for moderately large dimensions (around $d=12$), good results can be obtained even with the simplest Metropolis-Hastings algorithm (with a very long computation time), although the symmetry of the target and the initialisation can be misleading. In this section we revisit this example and introduce the following more complex test case in dimension $d$ : 
\begin{equation}\label{eq:target_many_gaussians}\pi = \sum_{i=1}^d w_i \mathcal{N}(m+\mu_i,\sigma^2_d I_d) + \sum_{j=1}^d \overline{w}_j \mathcal{N}(m - \mu_j,\sigma^2_d I_d),\end{equation}
where $m = (0.5,\ldots,0.5)^\mathrm{T}$ is the center of the domain, $\mu_i = \frac{a}{2}e_i$ with $a>0$ and $e_i$ is the $i$-th vector of the canonical basis, $\sigma_d>0$ is a fixed standard deviation and $w_i,\overline{w}_i\in[0,1]$ are normalized weights. This distribution is made of $2d$ Gaussian distributions, 2 per dimension, and as in \cite{cappe2008adaptive, delyon2019adaptive}, the distance between the centers does not depend on the dimension (it is equal to $a$ between the two distributions on the same dimension and to $a/\sqrt{2}$ between two distributions on two different dimensions). We choose the following parameters: 
\[a = 0.7,\quad \sigma_d = \sqrt{\frac{0.03}{4d}},  \quad w_i = 0.25/d, \quad \overline{w_i} = 0.75/d.\]
With this choice, the $2d$ peaks are well separated and their variance decreases with the dimension. In addition half of the peaks carries the three quarters of the total weight of the distribution. This test case thus combines most of the difficulties: this is a multimodal distribution, there is a strong exploration-exploitation dilemma due to the size and the isolation of the peaks and all the modes do not have the same weight.

\subsubsection{Initialization} 

Our samplers are initialized with all the particles in a corner \eqref{eq:cornerinitial}, thus far away from all the peaks and closer to the modes with a lighter weight. For the competitors, this initialization leads to bad results and often a degeneracy of the importance weights. The behaviour of PMH does not really depend on the initialization since the large proposal distribution allows move all across the domain. Thus for PMH, SAIS and SMC the particles are initially sampled uniformly in the whole domain. This gives more chance to the importance sampling based methods since there are particles close to all the modes initially. Moreover, SMC may be quite affected by a bad initialisation due to the tempering scheme which forces the particles to stay close to their initial state during the first iterations. We have also checked that, for our samplers, the initialization \eqref{eq:cornerinitial} or a uniform initialization in the whole domain both lead to the same behaviour (with a faster convergence in the latter case).  

\subsubsection{Parameters} 

In order to tune the the proposal distributions of our samplers, we introduce three different natural scales: 
\[\sigma_\mathrm{small} = 3.5\sigma_d, \quad\sigma_\mathrm{medium} = 10\sigma_d, \quad \sigma_\mathrm{large} = 20\sigma_d.\]
The small scale corresponds to roughly speaking, the size of a peak but it is very small compared to the distance between the peaks. The medium scale is larger than the typical size of a peak and allows moves between neighbouring peaks. The large scale distribution allows moves between far away peaks. The interaction kernel $K$ in the Vanilla CMC Algorithm \ref{algo:coker} is taken equal to the indicator of a ball of radius $\sigma_\mathrm{medium}$. The different variants of the adaptive MoKA algorithm use the three sizes. For SAIS and SMC, we have tested all the kernel sizes between 0.05 and 0.8 with a step of 0.05 and we took the sizes which led to the best results. For PMH the best outcomes are obtained with a large kernel size although the different sizes essentially lead to equally bad results. 

Regarding the other parameters, the ESS threshold in SMC is set to $0.75N$, there are 150 MH steps between two resampling steps and a linear tempering scheme with 80 steps is used from the initial distribution to the target distribution. For SAIS, the sequence of kernel bandwiths and mixture weights are taken according to the recommendations of the authors (see Appendix \ref{sec:linksis}). 

In all samplers, the number of particles is $N=10^5$ and the outcomes are shown for a fixed allowed GPU computation time of 180 seconds. 

\subsubsection{Results}\label{sec:results_many_gaussians}

The results shown in Table \ref{tab:results_many_gaussians} are for dimensions ranging from $d=4$ to $d=13$ and computed over 50 independent runs for all samplers and each dimension. For better clarity, we have classified the results in four possible outcomes, similarly to \cite{cappe2008adaptive}, and defined as follows. 

\begin{itemize}
    \item \textbf{Disastrous.} The median Energy Distance between the final sample and a true sample is larger than $E_0/10$ where $E_0$ is the initial Energy Distance between a uniform sample in the domain and a true sample. 
    \item \textbf{Excellent.} The median Energy Distance between the final sample and a true sample falls below the upper limit of the $90\%$ prediction interval between two true samples. 
    \item \textbf{Good.} The median Energy Distance between the final sample and a true sample falls between the upper limit of the $90\%$ prediction interval and the average in log scale between this upper limit and $E_0/10$. 
    \item \textbf{Mediocre.} The median Energy Distance between the final sample and a true sample falls between $E_0/10$ and the average in log scale between the upper limit of the $90\%$ prediction interval and $E_0/10$. 
\end{itemize}

The convergence plots of the best runs in dimensions 7 and 12 are shown in Figure \ref{fig:bestrunsmanygaussians}.

\begin{figure}[ht]
    \centering
    \subfloat[$d=7$]{\includegraphics[scale=0.22]{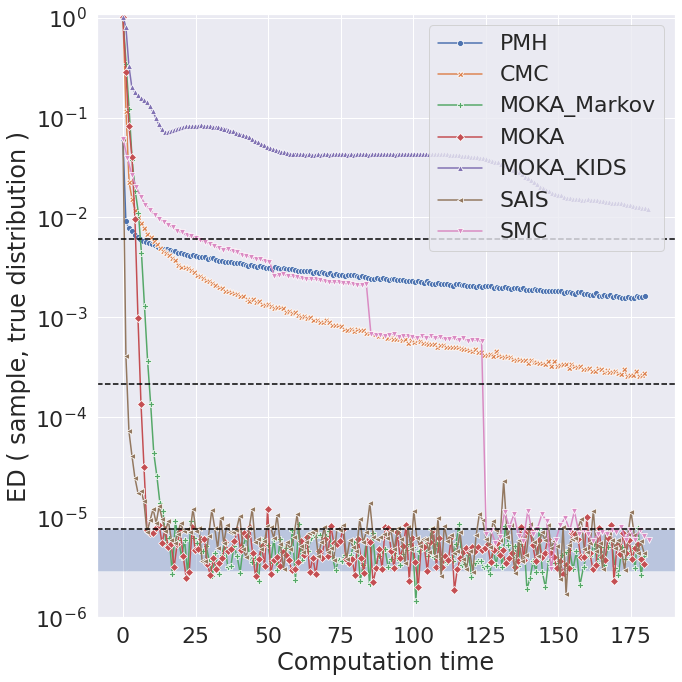}}
    \subfloat[$d=12$]{\includegraphics[scale=0.22]{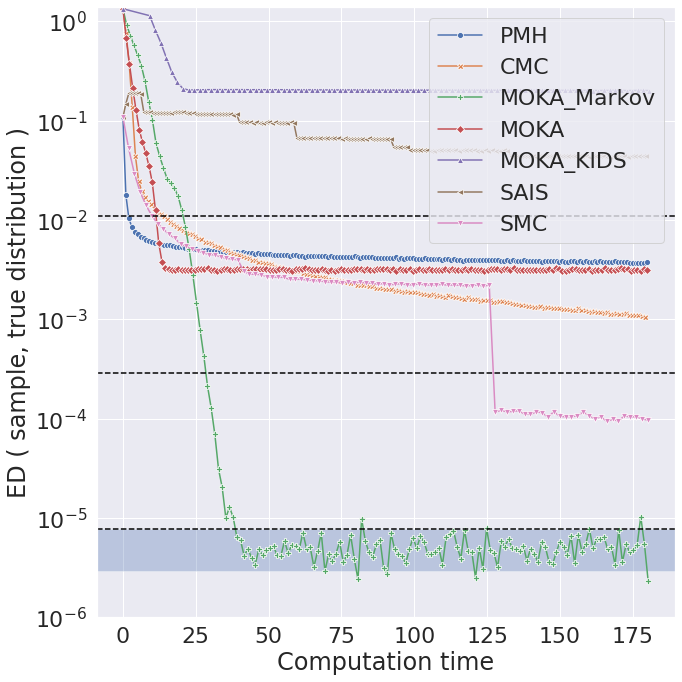}}
    \caption{The best runs among 50 independent runs for the different algorithms in dimension $d=7$ and $d=12$. The three black dashed lines divide the $y$-axis into four regions for the four possible outcomes, from top to bottom: disastrous, mediocre, good, excellent. The best run is the one which leads to the smallest final energy distance or, if several runs of the same algorithm lead to an excellent outcome, the best run is the fastest one. The blue shaded region is the $90\%$ prediction interval of the energy distance between two true samples.}
    \label{fig:bestrunsmanygaussians}
\end{figure}

\begin{table}[!ht]
    \centering
\begin{tabular}{|c||>{\centering\arraybackslash}p{1.15cm}|>{\centering\arraybackslash}p{1.15cm}|>{\centering\arraybackslash}p{1.15cm}|>{\centering\arraybackslash}p{1.15cm}|>{\centering\arraybackslash}p{1.15cm}|>{\centering\arraybackslash}p{1.2cm}|>{\centering\arraybackslash}p{1.15cm}|}
\hline
{} &                                                PMH &                                                CMC &                                       MoKA Markov &                                               MoKA &                                         MoKA KIDS &                                               SAIS &                                               SMC \\
\hline\hline
$d=4$  &    \cellcolor{green} E 4.4E-06 {\tiny (98,2,0,0) } &   \cellcolor{green} E 4.4E-06 {\tiny (100,0,0,0) } &  \cellcolor{green} E 4.6E-06 {\tiny (100,0,0,0) } &   \cellcolor{green} E 4.7E-06 {\tiny (100,0,0,0) } &  \cellcolor{green} E 4.4E-06 {\tiny (100,0,0,0) } &   \cellcolor{green} E 4.8E-06 {\tiny (100,0,0,0) } &  \cellcolor{green} E 4.5E-06 {\tiny (100,0,0,0) } \\\hline
$d=5$  &    \cellcolor{lime} G 3.3E-05 {\tiny (0,100,0,0) } &    \cellcolor{lime} G 2.0E-05 {\tiny (0,100,0,0) } &  \cellcolor{green} E 4.6E-06 {\tiny (100,0,0,0) } &   \cellcolor{green} E 4.5E-06 {\tiny (100,0,0,0) } &   \cellcolor{green} E 4.9E-06 {\tiny (96,2,2,0) } &   \cellcolor{green} E 5.0E-06 {\tiny (100,0,0,0) } &   \cellcolor{green} E 5.3E-06 {\tiny (94,6,0,0) } \\\hline
$d=6$  &  \cellcolor{yellow} M 5.9E-04 {\tiny (0,0,100,0) } &    \cellcolor{lime} G 1.1E-04 {\tiny (0,100,0,0) } &  \cellcolor{green} E 4.7E-06 {\tiny (100,0,0,0) } &   \cellcolor{green} E 4.7E-06 {\tiny (100,0,0,0) } &  \cellcolor{lime} G 4.1E-05 {\tiny (16,46,32,6) } &    \cellcolor{green} E 5.4E-06 {\tiny (96,4,0,0) } &   \cellcolor{lime} G 1.0E-05 {\tiny (24,70,6,0) } \\\hline
$d=7$  &  \cellcolor{yellow} M 1.8E-03 {\tiny (0,0,100,0) } &   \cellcolor{yellow} M 3.1E-04 {\tiny (0,4,96,0) } &  \cellcolor{green} E 4.9E-06 {\tiny (100,0,0,0) } &   \cellcolor{green} E 4.8E-06 {\tiny (100,0,0,0) } &    \cellcolor{red} D 3.7E-02 {\tiny (0,0,0,100) } &   \cellcolor{green} E 6.5E-06 {\tiny (70,30,0,0) } &   \cellcolor{lime} G 1.5E-05 {\tiny (14,80,2,4) } \\\hline
$d=8$  &  \cellcolor{yellow} M 2.5E-03 {\tiny (0,0,100,0) } &  \cellcolor{yellow} M 5.1E-04 {\tiny (0,0,100,0) } &  \cellcolor{green} E 4.8E-06 {\tiny (100,0,0,0) } &   \cellcolor{green} E 5.0E-06 {\tiny (100,0,0,0) } &    \cellcolor{red} D 9.4E-02 {\tiny (0,0,0,100) } &  \cellcolor{yellow} M 1.0E-03 {\tiny (0,12,82,6) } &    \cellcolor{lime} G 6.5E-05 {\tiny (0,96,2,2) } \\\hline
$d=9$  &  \cellcolor{yellow} M 3.2E-03 {\tiny (0,0,100,0) } &  \cellcolor{yellow} M 7.0E-04 {\tiny (0,0,100,0) } &  \cellcolor{green} E 4.8E-06 {\tiny (100,0,0,0) } &   \cellcolor{green} E 4.6E-06 {\tiny (100,0,0,0) } &    \cellcolor{red} D 1.9E-01 {\tiny (0,0,0,100) } &     \cellcolor{red} D 1.6E-02 {\tiny (0,0,20,80) } &    \cellcolor{lime} G 2.8E-05 {\tiny (0,92,2,6) } \\\hline
$d=10$ &  \cellcolor{yellow} M 3.5E-03 {\tiny (0,0,100,0) } &  \cellcolor{yellow} M 9.2E-04 {\tiny (0,0,100,0) } &  \cellcolor{green} E 4.9E-06 {\tiny (100,0,0,0) } &   \cellcolor{green} E 5.9E-06 {\tiny (54,0,46,0) } &    \cellcolor{red} D 2.1E-01 {\tiny (0,0,0,100) } &     \cellcolor{red} D 8.8E-02 {\tiny (0,0,0,100) } &   \cellcolor{lime} G 5.4E-05 {\tiny (0,100,0,0) } \\\hline
$d=11$ &  \cellcolor{yellow} M 3.7E-03 {\tiny (0,0,100,0) } &  \cellcolor{yellow} M 1.1E-03 {\tiny (0,0,100,0) } &  \cellcolor{green} E 5.1E-06 {\tiny (100,0,0,0) } &   \cellcolor{yellow} M 4.7E-03 {\tiny (2,0,90,8) } &    \cellcolor{red} D 2.6E-01 {\tiny (0,0,0,100) } &     \cellcolor{red} D 1.1E-01 {\tiny (0,0,0,100) } &    \cellcolor{lime} G 6.8E-05 {\tiny (0,96,0,4) } \\\hline
$d=12$ &  \cellcolor{yellow} M 3.8E-03 {\tiny (0,0,100,0) } &  \cellcolor{yellow} M 1.3E-03 {\tiny (0,0,100,0) } &  \cellcolor{green} E 5.1E-06 {\tiny (100,0,0,0) } &  \cellcolor{yellow} M 1.0E-02 {\tiny (0,0,64,36) } &    \cellcolor{red} D 2.9E-01 {\tiny (0,0,0,100) } &     \cellcolor{red} D 1.5E-01 {\tiny (0,0,0,100) } &   \cellcolor{lime} G 1.2E-04 {\tiny (0,100,0,0) } \\\hline
$d=13$ &  \cellcolor{yellow} M 3.9E-03 {\tiny (0,0,100,0) } &  \cellcolor{yellow} M 1.5E-03 {\tiny (0,0,100,0) } &   \cellcolor{green} E 5.3E-06 {\tiny (98,0,2,0) } &     \cellcolor{red} D 1.3E-02 {\tiny (0,0,40,60) } &    \cellcolor{red} D 5.0E-01 {\tiny (0,0,0,100) } &     \cellcolor{red} D 2.1E-01 {\tiny (0,0,0,100) } &    \cellcolor{lime} G 1.1E-04 {\tiny (0,98,2,0) } \\\hline
% $d=14$ &  \cellcolor{yellow} M 4.0E-03 {\tiny (0,0,100,0) } &  \cellcolor{yellow} M 1.7E-03 {\tiny (0,0,100,0) } &  \cellcolor{yellow} M 1.7E-03 {\tiny (3,0,97,0) } &     \cellcolor{red} D 1.6E-02 {\tiny (0,0,17,83) } &    \cellcolor{red} D 6.2E-01 {\tiny (0,0,0,100) } &     \cellcolor{red} D 2.1E-01 {\tiny (0,0,0,100) } &   \cellcolor{lime} G 2.2E-04 {\tiny (0,100,0,0) } \\\hline
\end{tabular}

    \caption{Detailed outcomes of the different algorithms for the target distribution \eqref{eq:target_many_gaussians} with the dimension ranging from 4 to 13. The number in each cell corresponds to the median computed over 50 independent runs of the Energy Distance between the final sample and a true sample. The outcome is classified as Excellent (E) (green), Good (G) (light green), Mediocre (M) (yellow) or Disastrous (D) (red) as defined in Section \ref{sec:results_many_gaussians}. The four percentages at the bottom of each cell indicates, from left to right, the percentages of Excellent, Good, Mediocre and Disastrous results for 50 independent runs.}
    \label{tab:results_many_gaussians}
\end{table}

\subsubsection{Discussion}

For small dimensions up to dimension 6, all samplers (including PMH) have a similar behaviour and manage to reach the smallest possible distance or at least have a good outcome. From dimension 6 for PMH, dimension 7 for CMC and MoKA KIDS and dimension 9 for SAIS, the outcome is not satisfactory anymore and even become disastrous for MoKA KIDS and SAIS. The only algorithms which have an excellent outcome up to dimension 10 are MoKA Markov and MoKA. The former remains an excellent sampler up to dimension~13. SMC also manages to have a good outcome up to dimension~13 although it sometimes, rarely, leads to mediocre or disastrous results. It is probable that with a better tempering scheme or an adaptive version, SMC would be able to reach the smallest possible distance although this may require a fine tuning of these hyper parameters. In comparison, MoKA Markov seems simpler to tune: the choice of the kernel sizes that we propose consistently leads to excellent results up to dimension~13. Without this adaptive behaviour, the fixed kernel size in CMC is initially well-adapted to exploration but it is too large compared to the size of the modes which causes a very slow convergence and leads to mediocre results (the final median error remains significantly better that PMH for the same computation time though). The adaptive procedure in MoKA is based on maximizing the acceptance and seems less efficient that the optimization procedure in MoKA Markov. It can be explained by the fact that the non-Markovian choice of the kernels tends to favour the smallest interaction kernel. Although this kernel is also ultimately chosen by MoKA Markov, choosing this kernel ``too early'' does not introduce enough exploration. Note that with a uniform initialisation over the whole domain, we have observed (not shown) that this flaw disappears and MoKA and MoKA Markov have a similar behaviour. MoKA KIDS and SAIS both lead to disastrous results when the dimension increases. It is due to the early degeneracy of the weights of a small number of particles which prevents a proper mixing of the population.

% \begin{figure}[ht]
%     \centering
%     \includegraphics[scale=0.2]{mokas_init.png}
%     \caption{MoKA Markov and MoKA with different initializations in dimension $d=12$}
% \end{figure}

\section{Conclusion}

Nonlinear MCMC samplers are appealing. They generalise more traditional methods and overcome many of their flaws. Getting back to the historical development of mathematical kinetic theory, we can advantageously simulate nonlinear Markov processes using systems of interacting particles. This versatility enables the development of a wide variety of algorithms that can tackle difficult sampling problems while remaining in a traditional Markovian framework. Although the implementation may, at first sight, seem computationally demanding, we have shown that modern GPU hardware can now enable the use of interacting particles for Monte Carlo sampling at scale.

Alongside its variants, the CMC algorithm can be implemented efficiently and leads to striking reductions in global convergence times. It relies on pairwise interactions to best leverage the information that is present in any given sample swarm, and thus make the most of each evaluation of the target distribution. CMC avoids the mixing issues of classical ``one particle'' methods such as Metropolis-Hastings, with a notable improvement of the convergence and mixing speed. In particular when dealing with multimodal distributions, where the relative weight of each mode is difficult to estimate. In practice, we thus expect that the benefits of this improved ``sample efficiency'' will outweigh the (small) computational overhead of our method for most applications.

We note that the present contribution shares similarities with some well-known and recent nonlinear samplers, that are often based on non-Markovian importance sampling techniques. In the future, the joint development of Markovian and non-Markovian methods is likely to benefit both approaches: we may for instance improve the importance weights in SAIS-like methods as in the KIDS algorithm, or construct better proposal distributions in CMC which incorporate knowledge of (part of) the past. The theoretical study of such hybrid methods would however be challenging and require the development of new analytical tools.

Finally, one may think of extending the theoretical framework introduced here to other MCMC samplers, such as \emph{nonlinear} PDMP samplers or \emph{nonlinear} Langevin dynamics. This could open new problems in nonlinear analysis and statistics, both on the theoretical and computational sides.

\section*{Acknowledgments}

The authors wish to thank Pierre Degond, Robin Ryder and Christian Robert for their support and useful advice. A.D. acknowledges the hospitality of the CEREMADE, Universit\'e Paris-Dauphine where part of this research was carried out. This research was conducted while A.D. was supported by an EPSRC-Roth scholarship co-funded by the Engineering and Physical Sciences Research Council and the Department of Mathematics at Imperial College London. G.C. acknowledges the hospitality of the Mathematics Department at Imperial College London where part of this research was carried out.

%% ** The bibliograhy **
\bibliographystyle{imsart-number}

\bibliography{biblioCMCv2}

\begin{thebibliography}{87}
% BibTex style file: imsart-number.bst, 2017-11-03
% Default style options (sort=1,type=number).
% Used options (sort=1,type=number).

\bibitem{agrawal2021optimal}
\begin{barticle}[author]
\bauthor{\bsnm{Agrawal},~\bfnm{Sanket}\binits{S.}},
  \bauthor{\bsnm{Vats},~\bfnm{Dootika}\binits{D.}},
  \bauthor{\bsnm{{\L}atuszy{\'n}ski},~\bfnm{Krzysztof}\binits{K.}} \AND
  \bauthor{\bsnm{Roberts},~\bfnm{Gareth~O}\binits{G.~O.}}
(\byear{2021}).
\btitle{Optimal scaling of MCMC beyond Metropolis}.
\bjournal{arXiv preprint arXiv:2104.02020}.
\end{barticle}
\endbibitem

\bibitem{andrieu2010particle}
\begin{barticle}[author]
\bauthor{\bsnm{Andrieu},~\bfnm{Christophe}\binits{C.}},
  \bauthor{\bsnm{Doucet},~\bfnm{Arnaud}\binits{A.}} \AND
  \bauthor{\bsnm{Holenstein},~\bfnm{Roman}\binits{R.}}
(\byear{2010}).
\btitle{Particle {M}arkov {C}hain {M}onte {C}arlo methods}.
\bjournal{J. R. Stat. Soc. Ser. B. Stat. Methodol.}
\bvolume{72}
\bpages{269--342}.
\end{barticle}
\endbibitem

\bibitem{andrieu2007nonlinear}
\begin{barticle}[author]
\bauthor{\bsnm{Andrieu},~\bfnm{Christophe}\binits{C.}},
  \bauthor{\bsnm{Jasra},~\bfnm{Ajay}\binits{A.}},
  \bauthor{\bsnm{Doucet},~\bfnm{Arnaud}\binits{A.}} \AND
  \bauthor{\bsnm{Del~Moral},~\bfnm{Pierre}\binits{P.}}
(\byear{2007}).
\btitle{{N}on-linear {M}arkov {C}hain {M}onte {C}arlo}.
\bjournal{ESAIM: Proc.}
\bvolume{19}
\bpages{79-84}.
\end{barticle}
\endbibitem

\bibitem{andrieu2011nonlinear}
\begin{barticle}[author]
\bauthor{\bsnm{Andrieu},~\bfnm{Christophe}\binits{C.}},
  \bauthor{\bsnm{Jasra},~\bfnm{Ajay}\binits{A.}},
  \bauthor{\bsnm{Doucet},~\bfnm{Arnaud}\binits{A.}} \AND
  \bauthor{\bsnm{Del~Moral},~\bfnm{Pierre}\binits{P.}}
(\byear{2011}).
\btitle{On nonlinear {M}arkov {C}hain {M}onte {C}arlo}.
\bjournal{Bernoulli}
\bvolume{17}
\bpages{987--1014}.
\end{barticle}
\endbibitem

\bibitem{atchade2011adaptive}
\begin{bincollection}[author]
\bauthor{\bsnm{Atchad\'{e}},~\bfnm{Yves}\binits{Y.}},
  \bauthor{\bsnm{Fort},~\bfnm{Gersende}\binits{G.}},
  \bauthor{\bsnm{Moulines},~\bfnm{Eric}\binits{E.}} \AND
  \bauthor{\bsnm{Priouret},~\bfnm{Pierre}\binits{P.}}
(\byear{2011}).
\btitle{Adaptive {M}arkov {C}hain {M}onte {C}arlo: {T}heory and {M}ethods}.
In \bbooktitle{Bayesian Time Series Models}
(\beditor{\bfnm{David}\binits{D.}~\bsnm{Barber}},
  \beditor{\bfnm{Ali}\binits{A.}~\bsnm{Taylan~Cemgil}} \AND
  \beditor{\bfnm{Silvia}\binits{S.}~\bsnm{Chiappa}}, eds.)
\bpages{32--51}.
\bpublisher{Cambridge Univ. Press.}
\end{bincollection}
\endbibitem

\bibitem{atchade2005}
\begin{barticle}[author]
\bauthor{\bsnm{Atchad\'{e}},~\bfnm{Yves~F.}\binits{Y.~F.}} \AND
  \bauthor{\bsnm{Rosenthal},~\bfnm{Jeffrey~S.}\binits{J.~S.}}
(\byear{2005}).
\btitle{On adaptive {M}arkov {C}hain {M}onte {C}arlo algorithms}.
\bjournal{Bernoulli}
\bvolume{11}
\bpages{815--828}.
\end{barticle}
\endbibitem

\bibitem{barnes1986hierarchical}
\begin{barticle}[author]
\bauthor{\bsnm{Barnes},~\bfnm{Josh}\binits{J.}} \AND
  \bauthor{\bsnm{Hut},~\bfnm{Piet}\binits{P.}}
(\byear{1986}).
\btitle{A hierarchical $O(N \log N)$ force-calculation algorithm}.
\bjournal{Nature}
\bvolume{324}
\bpages{446--449}.
\end{barticle}
\endbibitem

\bibitem{bartier2006convex}
\begin{barticle}[author]
\bauthor{\bsnm{Bartier},~\bfnm{Jean-Philippe}\binits{J.-P.}} \AND
  \bauthor{\bsnm{Dolbeault},~\bfnm{Jean}\binits{J.}}
(\byear{2006}).
\btitle{Convex {S}obolev inequalities and spectral gap}.
\bjournal{C. R. Math. Acad. Sci. Paris}
\bvolume{342}
\bpages{307--312}.
\end{barticle}
\endbibitem

\bibitem{bellomo2017active}
\begin{bbook}[author]
\beditor{\bsnm{Bellomo},~\bfnm{Nicola}\binits{N.}},
  \beditor{\bsnm{Degond},~\bfnm{Pierre}\binits{P.}} \AND
  \beditor{\bsnm{Tadmor},~\bfnm{Eitan}\binits{E.}}, eds.
(\byear{2017}).
\btitle{Active Particles, Volume 1: Advances in Theory, Models, and
  Applications}.
\bpublisher{Birkh{\"a}user}.
\end{bbook}
\endbibitem

\bibitem{bellomo2019active}
\begin{bbook}[author]
\beditor{\bsnm{Bellomo},~\bfnm{Nicola}\binits{N.}},
  \beditor{\bsnm{Degond},~\bfnm{Pierre}\binits{P.}} \AND
  \beditor{\bsnm{Tadmor},~\bfnm{Eitan}\binits{E.}}, eds.
(\byear{2019}).
\btitle{Active Particles, Volume 2: Advances in Theory, Models, and
  Applications}.
\bpublisher{Birkh{\"a}user}.
\end{bbook}
\endbibitem

\bibitem{besag1994mala}
\begin{barticle}[author]
\bauthor{\bsnm{Besag},~\bfnm{J.}\binits{J.}}
(\byear{1994}).
\btitle{Comments on ``{R}epresentations of knowledge in complex systems" by
  {U}. {G}renander and {MI} {M}iller}.
\bjournal{J. R. Stat. Soc. Ser. B. Stat. Methodol.}
\bvolume{56}
\bpages{591--592}.
\end{barticle}
\endbibitem

\bibitem{bhatnagar1954model}
\begin{barticle}[author]
\bauthor{\bsnm{Bhatnagar},~\bfnm{Prabhu~Lal}\binits{P.~L.}},
  \bauthor{\bsnm{Gross},~\bfnm{Eugene~P}\binits{E.~P.}} \AND
  \bauthor{\bsnm{Krook},~\bfnm{Max}\binits{M.}}
(\byear{1954}).
\btitle{A model for collision processes in gases. {I}. {S}mall amplitude
  processes in charged and neutral one-component systems}.
\bjournal{Phys. Rev.}
\bvolume{94}
\bpages{511}.
\end{barticle}
\endbibitem

\bibitem{bolley2005weighted}
\begin{barticle}[author]
\bauthor{\bsnm{Bolley},~\bfnm{Fran{\c{c}}ois}\binits{F.}} \AND
  \bauthor{\bsnm{Villani},~\bfnm{C{\'e}dric}\binits{C.}}
(\byear{2005}).
\btitle{Weighted {C}sisz{\'a}r-{K}ullback-{P}insker inequalities and
  applications to transportation inequalities}.
\bjournal{Ann. Fac. Sci. Toulouse Math.}
\bvolume{14}
\bpages{331--352}.
\end{barticle}
\endbibitem

\bibitem{bornn2013adaptive}
\begin{barticle}[author]
\bauthor{\bsnm{Bornn},~\bfnm{Luke}\binits{L.}},
  \bauthor{\bsnm{Jacob},~\bfnm{Pierre~E}\binits{P.~E.}},
  \bauthor{\bsnm{Del~Moral},~\bfnm{Pierre}\binits{P.}} \AND
  \bauthor{\bsnm{Doucet},~\bfnm{Arnaud}\binits{A.}}
(\byear{2013}).
\btitle{An adaptive interacting {W}ang--{L}andau algorithm for automatic
  density exploration}.
\bjournal{J. Comput. Graph. Statist.}
\bvolume{22}
\bpages{749--773}.
\end{barticle}
\endbibitem

\bibitem{bremaud}
\begin{bbook}[author]
\bauthor{\bsnm{Br{\'e}maud},~\bfnm{Pierre}\binits{P.}}
(\byear{1999}).
\btitle{Markov Chains: Gibbs Fields, Monte Carlo Simulation and Queues}.
\bseries{Texts in Applied Mathematics}
\bvolume{31}.
\bpublisher{Springer-Verlag New York}.
\end{bbook}
\endbibitem

\bibitem{caffarelli2002constructing}
\begin{barticle}[author]
\bauthor{\bsnm{Caffarelli},~\bfnm{Luis}\binits{L.}},
  \bauthor{\bsnm{Feldman},~\bfnm{Mikhail}\binits{M.}} \AND
  \bauthor{\bsnm{McCann},~\bfnm{Robert}\binits{R.}}
(\byear{2002}).
\btitle{Constructing optimal maps for {M}onge's transport problem as a limit of
  strictly convex costs}.
\bjournal{J. Amer. Math. Soc.}
\bvolume{15}
\bpages{1--26}.
\end{barticle}
\endbibitem

\bibitem{cappe2008adaptive}
\begin{barticle}[author]
\bauthor{\bsnm{Capp{\'e}},~\bfnm{Olivier}\binits{O.}},
  \bauthor{\bsnm{Douc},~\bfnm{Randal}\binits{R.}},
  \bauthor{\bsnm{Guillin},~\bfnm{Arnaud}\binits{A.}},
  \bauthor{\bsnm{Marin},~\bfnm{Jean-Michel}\binits{J.-M.}} \AND
  \bauthor{\bsnm{Robert},~\bfnm{Christian~P}\binits{C.~P.}}
(\byear{2008}).
\btitle{Adaptive importance sampling in general mixture classes}.
\bjournal{Stat. Comput.}
\bvolume{18}
\bpages{447--459}.
\end{barticle}
\endbibitem

\bibitem{cappe2004population}
\begin{barticle}[author]
\bauthor{\bsnm{Capp{\'e}},~\bfnm{Olivier}\binits{O.}},
  \bauthor{\bsnm{Guillin},~\bfnm{Arnaud}\binits{A.}},
  \bauthor{\bsnm{Marin},~\bfnm{Jean-Michel}\binits{J.-M.}} \AND
  \bauthor{\bsnm{Robert},~\bfnm{Christian~P}\binits{C.~P.}}
(\byear{2004}).
\btitle{Population {M}onte {C}arlo}.
\bjournal{J. Comput. Graph. Statist.}
\bvolume{13}
\bpages{907--929}.
\end{barticle}
\endbibitem

\bibitem{carmona2018probabilistic}
\begin{bbook}[author]
\bauthor{\bsnm{Carmona},~\bfnm{Ren{\'e}}\binits{R.}} \AND
  \bauthor{\bsnm{Delarue},~\bfnm{Fran{\c{c}}ois}\binits{F.}}
(\byear{2018}).
\btitle{Probabilistic Theory of Mean Field Games with Applications I}.
\bpublisher{Springer}.
\end{bbook}
\endbibitem

\bibitem{champion2011monge}
\begin{barticle}[author]
\bauthor{\bsnm{Champion},~\bfnm{Thierry}\binits{T.}},
  \bauthor{\bsnm{De~Pascale},~\bfnm{Luigi}\binits{L.}} \betal{et~al.}
(\byear{2011}).
\btitle{The {M}onge problem in $\mathbb{R}^d$}.
\bjournal{Duke Math. J.}
\bvolume{157}
\bpages{551--572}.
\end{barticle}
\endbibitem

\bibitem{keops}
\begin{bmisc}[author]
\bauthor{\bsnm{Charlier},~\bfnm{Benjamin}\binits{B.}},
  \bauthor{\bsnm{Feydy},~\bfnm{Jean}\binits{J.}} \AND
  \bauthor{\bsnm{Glaunes},~\bfnm{Joan}\binits{J.}}
(\byear{2018}).
\btitle{Kernel operations on the {GPU}, with autodiff, without memory
  overflows}.
\bhowpublished{\url{http://www.kernel-operations.io}}.
\end{bmisc}
\endbibitem

\bibitem{craiu2009learn}
\begin{barticle}[author]
\bauthor{\bsnm{Craiu},~\bfnm{Radu~V}\binits{R.~V.}},
  \bauthor{\bsnm{Rosenthal},~\bfnm{Jeffrey}\binits{J.}} \AND
  \bauthor{\bsnm{Yang},~\bfnm{Chao}\binits{C.}}
(\byear{2009}).
\btitle{Learn from thy neighbor: Parallel-chain and regional adaptive {MCMC}}.
\bjournal{J. Amer. Statist. Assoc.}
\bvolume{104}
\bpages{1454--1466}.
\end{barticle}
\endbibitem

\bibitem{Del_Moral_2013}
\begin{bbook}[author]
\bauthor{\bsnm{{Del Moral}},~\bfnm{Pierre}\binits{P.}}
(\byear{2013}).
\btitle{Mean Field Simulation for {M}onte {C}arlo Integration}.
\bpublisher{Chapman and Hall/CRC}.
\end{bbook}
\endbibitem

\bibitem{del2006sequential}
\begin{barticle}[author]
\bauthor{\bsnm{Del~Moral},~\bfnm{Pierre}\binits{P.}},
  \bauthor{\bsnm{Doucet},~\bfnm{Arnaud}\binits{A.}} \AND
  \bauthor{\bsnm{Jasra},~\bfnm{Ajay}\binits{A.}}
(\byear{2006}).
\btitle{Sequential {M}onte {C}arlo samplers}.
\bjournal{J. R. Stat. Soc. Ser. B. Stat. Methodol.}
\bvolume{68}
\bpages{411--436}.
\end{barticle}
\endbibitem

\bibitem{delyon2019adaptive}
\begin{barticle}[author]
\bauthor{\bsnm{Delyon},~\bfnm{Bernard}\binits{B.}} \AND
  \bauthor{\bsnm{Portier},~\bfnm{Fran{\c{c}}ois}\binits{F.}}
(\byear{2020}).
\btitle{Safe and adaptive importance sampling: a mixture approach}.
\bjournal{arXiv preprint arXiv:1903.08507}.
\end{barticle}
\endbibitem

\bibitem{desvillettes2001celebrating}
\begin{barticle}[author]
\bauthor{\bsnm{Desvillettes},~\bfnm{Laurent}\binits{L.}},
  \bauthor{\bsnm{Mouhot},~\bfnm{Cl\'{e}ment}\binits{C.}} \AND
  \bauthor{\bsnm{Villani},~\bfnm{C\'{e}dric}\binits{C.}}
(\byear{2001}).
\btitle{Celebrating {C}ercignani's conjecture for the {B}oltzmann equation}.
\bjournal{Kinet. Relat. Models}
\bvolume{4}
\bpages{277--294}.
\end{barticle}
\endbibitem

\bibitem{diaconis2011geometric}
\begin{barticle}[author]
\bauthor{\bsnm{Diaconis},~\bfnm{Persi}\binits{P.}},
  \bauthor{\bsnm{Lebeau},~\bfnm{Gilles}\binits{G.}} \AND
  \bauthor{\bsnm{Michel},~\bfnm{Laurent}\binits{L.}}
(\byear{2011}).
\btitle{Geometric analysis for the {M}etropolis algorithm on {L}ipschitz
  domains}.
\bjournal{Invent. Math.}
\bvolume{185}
\bpages{239--281}.
\end{barticle}
\endbibitem

\bibitem{diez2019propagation}
\begin{barticle}[author]
\bauthor{\bsnm{Diez},~\bfnm{Antoine}\binits{A.}}
(\byear{2020}).
\btitle{Propagation of chaos and moderate interaction for a piecewise
  deterministic system of geometrically enriched particles}.
\bjournal{Electron. J. Probab.}
\bvolume{25}
\bpages{1-38}.
\bdoi{10.1214/20-EJP496}
\end{barticle}
\endbibitem

\bibitem{dobrushin1979vlasov}
\begin{barticle}[author]
\bauthor{\bsnm{Dobrushin},~\bfnm{Roland~L'vovich}\binits{R.~L.}}
(\byear{1979}).
\btitle{Vlasov equations}.
\bjournal{Funct. Anal. Appl.}
\bvolume{13}
\bpages{115--123}.
\end{barticle}
\endbibitem

\bibitem{douc2007}
\begin{barticle}[author]
\bauthor{\bsnm{Douc},~\bfnm{R.}\binits{R.}},
  \bauthor{\bsnm{Guillin},~\bfnm{A.}\binits{A.}},
  \bauthor{\bsnm{Marin},~\bfnm{J.~M.}\binits{J.~M.}} \AND
  \bauthor{\bsnm{Robert},~\bfnm{C.~P.}\binits{C.~P.}}
(\byear{2007}).
\btitle{Convergence of adaptive mixtures of importance sampling schemes}.
\bjournal{Ann. Statist.}
\bvolume{35}
\bpages{420--448}.
\end{barticle}
\endbibitem

\bibitem{doucet2013sequential}
\begin{bbook}[author]
\bauthor{\bsnm{Doucet},~\bfnm{Arnaud}\binits{A.}}, \bauthor{\bparticle{de}
  \bsnm{Freitas},~\bfnm{Nando}\binits{N.}} \AND
  \bauthor{\bsnm{Gordon},~\bfnm{Neil}\binits{N.}}
(\byear{2013}).
\btitle{Sequential {M}onte {C}arlo Methods in Practice}.
\bpublisher{Springer Science \& Business Media}.
\end{bbook}
\endbibitem

\bibitem{1987hmc}
\begin{barticle}[author]
\bauthor{\bsnm{{Duane}},~\bfnm{Simon}\binits{S.}},
  \bauthor{\bsnm{{Kennedy}},~\bfnm{A.~D.}\binits{A.~D.}},
  \bauthor{\bsnm{{Pendleton}},~\bfnm{Brian~J.}\binits{B.~J.}} \AND
  \bauthor{\bsnm{{Roweth}},~\bfnm{Duncan}\binits{D.}}
(\byear{1987}).
\btitle{{Hybrid {M}onte {C}arlo}}.
\bjournal{Phys. Lett. B}
\bvolume{195}
\bpages{216-222}.
\end{barticle}
\endbibitem

\bibitem{etheridge2011some}
\begin{bbook}[author]
\bauthor{\bsnm{Etheridge},~\bfnm{Alison}\binits{A.}}
(\byear{2011}).
\btitle{Some {M}athematical {M}odels from {P}opulation {G}enetics}.
\bseries{{\'E}cole D'{\'E}t{\'e} de Probabilit{\'e}s de Saint-Flour}
\bvolume{2012}.
\bpublisher{Springer Science \& Business Media}.
\end{bbook}
\endbibitem

\bibitem{fearnhead2018piecewise}
\begin{barticle}[author]
\bauthor{\bsnm{Fearnhead},~\bfnm{Paul}\binits{P.}},
  \bauthor{\bsnm{Bierkens},~\bfnm{Joris}\binits{J.}},
  \bauthor{\bsnm{Pollock},~\bfnm{Murray}\binits{M.}},
  \bauthor{\bsnm{Roberts},~\bfnm{Gareth~O}\binits{G.~O.}} \betal{et~al.}
(\byear{2018}).
\btitle{Piecewise deterministic {M}arkov processes for continuous-time {M}onte
  {C}arlo}.
\bjournal{Statist. Sci.}
\bvolume{33}
\bpages{386--412}.
\end{barticle}
\endbibitem

\bibitem{these_jean}
\begin{bphdthesis}[author]
\bauthor{\bsnm{Feydy},~\bfnm{Jean}\binits{J.}}
(\byear{2020}).
\btitle{Geometric data analysis, beyond convolutions},
\btype{PhD thesis},
\bpublisher{Université Paris-Saclay}.
\end{bphdthesis}
\endbibitem

\bibitem{feydy2020geometric}
\begin{bphdthesis}[author]
\bauthor{\bsnm{Feydy},~\bfnm{Jean}\binits{J.}}
(\byear{2020}).
\btitle{Geometric data analysis, beyond convolutions},
\btype{PhD thesis},
\bpublisher{Universit{\'e} Paris-Saclay Gif-sur-Yvette, France}.
\end{bphdthesis}
\endbibitem

\bibitem{keops_neurips}
\begin{barticle}[author]
\bauthor{\bsnm{Feydy},~\bfnm{Jean}\binits{J.}},
  \bauthor{\bsnm{Glaun{\`e}s},~\bfnm{Joan}\binits{J.}},
  \bauthor{\bsnm{Charlier},~\bfnm{Benjamin}\binits{B.}} \AND
  \bauthor{\bsnm{Bronstein},~\bfnm{Michael}\binits{M.}}
(\byear{2020}).
\btitle{Fast geometric learning with symbolic matrices}.
\bjournal{Proc. NeurIPS}
\bvolume{2}
\bpages{6}.
\end{barticle}
\endbibitem

\bibitem{fournier2015rate}
\begin{barticle}[author]
\bauthor{\bsnm{Fournier},~\bfnm{Nicolas}\binits{N.}} \AND
  \bauthor{\bsnm{Guillin},~\bfnm{Arnaud}\binits{A.}}
(\byear{2015}).
\btitle{On the rate of convergence in Wasserstein distance of the empirical
  measure}.
\bjournal{Probab. Theory Related Fields}
\bvolume{162}
\bpages{707--738}.
\end{barticle}
\endbibitem

\bibitem{gordon1993novel}
\begin{binproceedings}[author]
\bauthor{\bsnm{Gordon},~\bfnm{Neil~J}\binits{N.~J.}},
  \bauthor{\bsnm{Salmond},~\bfnm{David~J}\binits{D.~J.}} \AND
  \bauthor{\bsnm{Smith},~\bfnm{Adrian~FM}\binits{A.~F.}}
(\byear{1993}).
\btitle{Novel approach to nonlinear/non-{G}aussian {B}ayesian state
  estimation}.
In \bbooktitle{IEE proceedings F (radar and signal processing)}
\bvolume{140}
\bpages{107--113}.
\bpublisher{IET}.
\end{binproceedings}
\endbibitem

\bibitem{graham1992mckean}
\begin{barticle}[author]
\bauthor{\bsnm{Graham},~\bfnm{Carl}\binits{C.}}
(\byear{1992}).
\btitle{{M}c{K}ean--{V}lasov {I}t{\=o}-{S}korohod equations, and nonlinear
  diffusions with discrete jump sets}.
\bjournal{Stochastic Process. Appl.}
\bvolume{40}
\bpages{69--82}.
\end{barticle}
\endbibitem

\bibitem{graham1992nonlinear}
\begin{barticle}[author]
\bauthor{\bsnm{Graham},~\bfnm{Carl}\binits{C.}}
(\byear{1992}).
\btitle{Nonlinear diffusion with jumps}.
\bjournal{Ann. Inst. Henri Poincar\'e Probab. Stat.}
\bvolume{28}
\bpages{393--402}.
\end{barticle}
\endbibitem

\bibitem{GREENGARD1987325}
\begin{barticle}[author]
\bauthor{\bsnm{Greengard},~\bfnm{L}\binits{L.}} \AND
  \bauthor{\bsnm{Rokhlin},~\bfnm{V}\binits{V.}}
(\byear{1987}).
\btitle{A fast algorithm for particle simulations}.
\bjournal{J. Comput. Phys.}
\bvolume{73}
\bpages{325 - 348}.
\end{barticle}
\endbibitem

\bibitem{haario1999adaptive}
\begin{barticle}[author]
\bauthor{\bsnm{Haario},~\bfnm{Heikki}\binits{H.}},
  \bauthor{\bsnm{Saksman},~\bfnm{Eero}\binits{E.}} \AND
  \bauthor{\bsnm{Tamminen},~\bfnm{Johanna}\binits{J.}}
(\byear{1999}).
\btitle{Adaptive proposal distribution for random walk {M}etropolis algorithm}.
\bjournal{Comput. Statist.}
\bvolume{14}
\bpages{375--396}.
\end{barticle}
\endbibitem

\bibitem{haario2001adaptive}
\begin{barticle}[author]
\bauthor{\bsnm{Haario},~\bfnm{Heikki}\binits{H.}},
  \bauthor{\bsnm{Saksman},~\bfnm{Eero}\binits{E.}} \AND
  \bauthor{\bsnm{Tamminen},~\bfnm{Johanna}\binits{J.}}
(\byear{2001}).
\btitle{An adaptive {M}etropolis algorithm}.
\bjournal{Bernoulli}
\bvolume{7}
\bpages{223--242}.
\end{barticle}
\endbibitem

\bibitem{hastings1970monte}
\begin{barticle}[author]
\bauthor{\bsnm{Hastings},~\bfnm{W~Keith}\binits{W.~K.}}
(\byear{1970}).
\btitle{Monte {C}arlo sampling methods using {M}arkov chains and their
  applications}.
\bjournal{Biometrika}
\bvolume{57}
\bpages{97--109}.
\end{barticle}
\endbibitem

\bibitem{hauray_kacs_2014}
\begin{barticle}[author]
\bauthor{\bsnm{Hauray},~\bfnm{Maxime}\binits{M.}} \AND
  \bauthor{\bsnm{Mischler},~\bfnm{Stéphane}\binits{S.}}
(\byear{2014}).
\btitle{On {Kac}'s chaos and related problems}.
\bjournal{J. Funct. Anal.}
\bvolume{266}
\bpages{6055--6157}.
\end{barticle}
\endbibitem

\bibitem{hockney2021computer}
\begin{bbook}[author]
\bauthor{\bsnm{Hockney},~\bfnm{Roger~W}\binits{R.~W.}} \AND
  \bauthor{\bsnm{Eastwood},~\bfnm{James~W}\binits{J.~W.}}
(\byear{1988}).
\btitle{Computer Simulation Using Particles}.
\bpublisher{CRC Press}.
\end{bbook}
\endbibitem

\bibitem{jabin2016mean}
\begin{barticle}[author]
\bauthor{\bsnm{Jabin},~\bfnm{Pierre-Emmanuel}\binits{P.-E.}} \AND
  \bauthor{\bsnm{Wang},~\bfnm{Zhenfu}\binits{Z.}}
(\byear{2016}).
\btitle{Mean field limit and propagation of chaos for {V}lasov systems with
  bounded forces}.
\bjournal{J. Funct. Anal.}
\bvolume{271}
\bpages{3588--3627}.
\end{barticle}
\endbibitem

\bibitem{jasra2007population}
\begin{barticle}[author]
\bauthor{\bsnm{Jasra},~\bfnm{Ajay}\binits{A.}},
  \bauthor{\bsnm{Stephens},~\bfnm{David~A}\binits{D.~A.}} \AND
  \bauthor{\bsnm{Holmes},~\bfnm{Christopher~C}\binits{C.~C.}}
(\byear{2007}).
\btitle{On population-based simulation for static inference}.
\bjournal{Stat. Comput.}
\bvolume{17}
\bpages{263--279}.
\end{barticle}
\endbibitem

\bibitem{JIN2020108877}
\begin{barticle}[author]
\bauthor{\bsnm{Jin},~\bfnm{Shi}\binits{S.}},
  \bauthor{\bsnm{Li},~\bfnm{Lei}\binits{L.}} \AND
  \bauthor{\bsnm{Liu},~\bfnm{Jian-Guo}\binits{J.-G.}}
(\byear{2020}).
\btitle{Random Batch Methods ({RBM}) for interacting particle systems}.
\bjournal{J. Comput. Phys.}
\bvolume{400}
\bpages{108877}.
\end{barticle}
\endbibitem

\bibitem{jourdain_optimal_2014}
\begin{barticle}[author]
\bauthor{\bsnm{Jourdain},~\bfnm{Benjamin}\binits{B.}},
  \bauthor{\bsnm{Lelièvre},~\bfnm{Tony}\binits{T.}} \AND
  \bauthor{\bsnm{Miasojedow},~\bfnm{Błażej}\binits{B.}}
(\byear{2014}).
\btitle{Optimal scaling for the transient phase of {Metropolis} {Hastings}
  algorithms: {The} longtime behavior}.
\bjournal{Bernoulli}
\bvolume{20}.
\bdoi{10.3150/13-BEJ546}
\end{barticle}
\endbibitem

\bibitem{jourdain_optimal_2015}
\begin{barticle}[author]
\bauthor{\bsnm{Jourdain},~\bfnm{Benjamin}\binits{B.}},
  \bauthor{\bsnm{Lelièvre},~\bfnm{Tony}\binits{T.}} \AND
  \bauthor{\bsnm{Miasojedow},~\bfnm{Błażej}\binits{B.}}
(\byear{2015}).
\btitle{Optimal scaling for the transient phase of the random walk {Metropolis}
  algorithm: {The} mean-field limit}.
\bjournal{Ann. Appl. Probab.}
\bvolume{25}.
\bdoi{10.1214/14-AAP1048}
\end{barticle}
\endbibitem

\bibitem{jourdain1998propagation}
\begin{barticle}[author]
\bauthor{\bsnm{Jourdain},~\bfnm{Benjamin}\binits{B.}} \AND
  \bauthor{\bsnm{M\'el\'eard},~\bfnm{Sylvie}\binits{S.}}
(\byear{1998}).
\btitle{Propagation of chaos and fluctuations for a moderate model with smooth
  initial data}.
\bjournal{Ann. Inst. Henri Poincar\'e Probab. Stat.}
\bvolume{34}
\bpages{727-766}.
\end{barticle}
\endbibitem

\bibitem{jungel2016entropy}
\begin{bbook}[author]
\bauthor{\bsnm{J{\"u}ngel},~\bfnm{Ansgar}\binits{A.}}
(\byear{2016}).
\btitle{Entropy Methods for Diffusive Partial Differential Equations}.
\bpublisher{Springer}.
\end{bbook}
\endbibitem

\bibitem{kac1956foundations}
\begin{binproceedings}[author]
\bauthor{\bsnm{Kac},~\bfnm{Mark}\binits{M.}}
(\byear{1956}).
\btitle{Foundations of kinetic theory}.
In \bbooktitle{Proceedings of the Third Berkeley Symposium on Mathematical
  Statistics and Probability}
\bvolume{3}
\bpages{171--197}.
\bpublisher{University of California Press Berkeley and Los Angeles,
  California}.
\end{binproceedings}
\endbibitem

\bibitem{klaas2006fast}
\begin{binproceedings}[author]
\bauthor{\bsnm{Klaas},~\bfnm{Mike}\binits{M.}},
  \bauthor{\bsnm{Briers},~\bfnm{Mark}\binits{M.}},
  \bauthor{\bsnm{De~Freitas},~\bfnm{Nando}\binits{N.}},
  \bauthor{\bsnm{Doucet},~\bfnm{Arnaud}\binits{A.}},
  \bauthor{\bsnm{Maskell},~\bfnm{Simon}\binits{S.}} \AND
  \bauthor{\bsnm{Lang},~\bfnm{Dustin}\binits{D.}}
(\byear{2006}).
\btitle{Fast particle smoothing: If I had a million particles}.
In \bbooktitle{Proceedings of the 23rd international conference on Machine
  learning}
\bpages{481--488}.
\end{binproceedings}
\endbibitem

\bibitem{latuszynski2013clts}
\begin{barticle}[author]
\bauthor{\bsnm{{\L}atuszy{\'n}ski},~\bfnm{Krzysztof}\binits{K.}} \AND
  \bauthor{\bsnm{Roberts},~\bfnm{Gareth~O}\binits{G.~O.}}
(\byear{2013}).
\btitle{CLTs and asymptotic variance of time-sampled Markov chains}.
\bjournal{Methodol. Comput. Appl. Probab.}
\bvolume{15}
\bpages{237--247}.
\end{barticle}
\endbibitem

\bibitem{lee2010utility}
\begin{barticle}[author]
\bauthor{\bsnm{Lee},~\bfnm{Anthony}\binits{A.}},
  \bauthor{\bsnm{Yau},~\bfnm{Christopher}\binits{C.}},
  \bauthor{\bsnm{Giles},~\bfnm{Michael~B}\binits{M.~B.}},
  \bauthor{\bsnm{Doucet},~\bfnm{Arnaud}\binits{A.}} \AND
  \bauthor{\bsnm{Holmes},~\bfnm{Christopher~C}\binits{C.~C.}}
(\byear{2010}).
\btitle{On the utility of graphics cards to perform massively parallel
  simulation of advanced Monte Carlo methods}.
\bjournal{J. Comput. Graph. Statist.}
\bvolume{19}
\bpages{769--789}.
\end{barticle}
\endbibitem

\bibitem{lucy1974iterative}
\begin{barticle}[author]
\bauthor{\bsnm{Lucy},~\bfnm{Leon~B}\binits{L.~B.}}
(\byear{1974}).
\btitle{An iterative technique for the rectification of observed
  distributions}.
\bjournal{Astron. J.}
\bvolume{79}
\bpages{745}.
\end{barticle}
\endbibitem

\bibitem{matlab}
\begin{bmanual}[author]
\bpublisher{The Mathworks, Inc.}
(\byear{2017}).
\btitle{{MATLAB}}
\baddress{Natick, Massachusetts}.
\end{bmanual}
\endbibitem

\bibitem{mckean1966class}
\begin{barticle}[author]
\bauthor{\bsnm{McKean},~\bfnm{Henry~P}\binits{H.~P.}}
(\byear{1966}).
\btitle{A class of {M}arkov processes associated with nonlinear parabolic
  equations}.
\bjournal{Proc. Nat. Acad. Sci.}
\bvolume{56}
\bpages{1907}.
\end{barticle}
\endbibitem

\bibitem{mckean1967propagation}
\begin{barticle}[author]
\bauthor{\bsnm{McKean},~\bfnm{Henry~P}\binits{H.~P.}}
(\byear{1967}).
\btitle{Propagation of chaos for a class of non-linear parabolic equations}.
\bjournal{Stochastic Differential Equations (Lecture Series in Differential
  Equations, Session 7, Catholic Univ., 1967)}
\bpages{41--57}.
\end{barticle}
\endbibitem

\bibitem{meleard1996asymptotic}
\begin{bincollection}[author]
\bauthor{\bsnm{M{\'e}l{\'e}ard},~\bfnm{Sylvie}\binits{S.}}
(\byear{1996}).
\btitle{Asymptotic behaviour of some interacting particle systems;
  {M}c{K}ean--{V}lasov and {B}oltzmann models}.
In \bbooktitle{Probabilistic models for nonlinear partial differential
  equations}
\bpages{42--95}.
\bpublisher{Springer}.
\end{bincollection}
\endbibitem

\bibitem{mengersen1996rates}
\begin{barticle}[author]
\bauthor{\bsnm{Mengersen},~\bfnm{Kerrie~L}\binits{K.~L.}} \AND
  \bauthor{\bsnm{Tweedie},~\bfnm{Richard~L}\binits{R.~L.}}
(\byear{1996}).
\btitle{Rates of convergence of the {H}astings and {M}etropolis algorithms}.
\bjournal{Ann. Statist.}
\bvolume{24}
\bpages{101--121}.
\end{barticle}
\endbibitem

\bibitem{metropolis1953equation}
\begin{barticle}[author]
\bauthor{\bsnm{Metropolis},~\bfnm{Nicholas}\binits{N.}},
  \bauthor{\bsnm{Rosenbluth},~\bfnm{Arianna~W}\binits{A.~W.}},
  \bauthor{\bsnm{Rosenbluth},~\bfnm{Marshall~N}\binits{M.~N.}},
  \bauthor{\bsnm{Teller},~\bfnm{Augusta~H}\binits{A.~H.}} \AND
  \bauthor{\bsnm{Teller},~\bfnm{Edward}\binits{E.}}
(\byear{1953}).
\btitle{Equation of state calculations by fast computing machines}.
\bjournal{J. Chem. Phys}
\bvolume{21}
\bpages{1087--1092}.
\end{barticle}
\endbibitem

\bibitem{metropolis1949monte}
\begin{barticle}[author]
\bauthor{\bsnm{Metropolis},~\bfnm{Nicholas}\binits{N.}} \AND
  \bauthor{\bsnm{Ulam},~\bfnm{Stanislaw}\binits{S.}}
(\byear{1949}).
\btitle{The {M}onte {C}arlo method}.
\bjournal{J. Amer. Statist. Assoc.}
\bvolume{44}
\bpages{335--341}.
\end{barticle}
\endbibitem

\bibitem{natterer2001mathematical}
\begin{bbook}[author]
\bauthor{\bsnm{Natterer},~\bfnm{Frank}\binits{F.}} \AND
  \bauthor{\bsnm{W{\"u}bbeling},~\bfnm{Frank}\binits{F.}}
(\byear{2001}).
\btitle{Mathematical Methods in Image Reconstruction}.
\bseries{SIAM Monographs on Mathematical Modeling and Computation}
\bvolume{5}.
\bpublisher{Society for Industrial and Applied Mathematics}.
\end{bbook}
\endbibitem

\bibitem{Oelschl_ger_1985}
\begin{barticle}[author]
\bauthor{\bsnm{Oelschl{\"a}ger},~\bfnm{Karl}\binits{K.}}
(\byear{1985}).
\btitle{A law of large numbers for moderately interacting diffusion processes}.
\bjournal{Zeitschrift f{\"u}r Wahrscheinlichkeitstheorie und Verwandte Gebiete}
\bvolume{69}
\bpages{279--322}.
\end{barticle}
\endbibitem

\bibitem{paszke2017automatic}
\begin{binproceedings}[author]
\bauthor{\bsnm{Paszke},~\bfnm{Adam}\binits{A.}},
  \bauthor{\bsnm{Gross},~\bfnm{Sam}\binits{S.}},
  \bauthor{\bsnm{Chintala},~\bfnm{Soumith}\binits{S.}},
  \bauthor{\bsnm{Chanan},~\bfnm{Gregory}\binits{G.}},
  \bauthor{\bsnm{Yang},~\bfnm{Edward}\binits{E.}},
  \bauthor{\bsnm{DeVito},~\bfnm{Zachary}\binits{Z.}},
  \bauthor{\bsnm{Lin},~\bfnm{Zeming}\binits{Z.}},
  \bauthor{\bsnm{Desmaison},~\bfnm{Alban}\binits{A.}},
  \bauthor{\bsnm{Antiga},~\bfnm{Luca}\binits{L.}} \AND
  \bauthor{\bsnm{Lerer},~\bfnm{Adam}\binits{A.}}
(\byear{2017}).
\btitle{Automatic differentiation in {PyTorch}}.
In \bbooktitle{Proceedings of Neural Information Processing Systems}.
\end{binproceedings}
\endbibitem

\bibitem{pytorch}
\begin{barticle}[author]
\bauthor{\bsnm{Paszke},~\bfnm{Adam}\binits{A.}},
  \bauthor{\bsnm{Gross},~\bfnm{Sam}\binits{S.}},
  \bauthor{\bsnm{Chintala},~\bfnm{Soumith}\binits{S.}},
  \bauthor{\bsnm{Chanan},~\bfnm{Gregory}\binits{G.}},
  \bauthor{\bsnm{Yang},~\bfnm{Edward}\binits{E.}},
  \bauthor{\bsnm{DeVito},~\bfnm{Zachary}\binits{Z.}},
  \bauthor{\bsnm{Lin},~\bfnm{Zeming}\binits{Z.}},
  \bauthor{\bsnm{Desmaison},~\bfnm{Alban}\binits{A.}},
  \bauthor{\bsnm{Antiga},~\bfnm{Luca}\binits{L.}} \AND
  \bauthor{\bsnm{Lerer},~\bfnm{Adam}\binits{A.}}
(\byear{2017}).
\btitle{Automatic differentiation in {PyTorch}}.
\end{barticle}
\endbibitem

\bibitem{Richardson:72}
\begin{barticle}[author]
\bauthor{\bsnm{Richardson},~\bfnm{William~Hadley}\binits{W.~H.}}
(\byear{1972}).
\btitle{Bayesian-Based Iterative Method of Image Restoration}.
\bjournal{J. Opt. Soc. Am.}
\bvolume{62}
\bpages{55--59}.
\end{barticle}
\endbibitem

\bibitem{rizzo2016energy}
\begin{barticle}[author]
\bauthor{\bsnm{Rizzo},~\bfnm{Maria~L}\binits{M.~L.}} \AND
  \bauthor{\bsnm{Sz{\'e}kely},~\bfnm{G{\'a}bor~J}\binits{G.~J.}}
(\byear{2016}).
\btitle{Energy distance}.
\bjournal{Wiley interdisciplinary reviews: Computational Statistics}
\bvolume{8}
\bpages{27--38}.
\end{barticle}
\endbibitem

\bibitem{robert2013monte}
\begin{bbook}[author]
\bauthor{\bsnm{Robert},~\bfnm{Christian}\binits{C.}} \AND
  \bauthor{\bsnm{Casella},~\bfnm{George}\binits{G.}}
(\byear{2013}).
\btitle{Monte Carlo statistical methods}.
\bpublisher{Springer Science \& Business Media}.
\end{bbook}
\endbibitem

\bibitem{ROKHLIN1985187}
\begin{barticle}[author]
\bauthor{\bsnm{Rokhlin},~\bfnm{V}\binits{V.}}
(\byear{1985}).
\btitle{Rapid solution of integral equations of classical potential theory}.
\bjournal{J. Computat. Phys.}
\bvolume{60}
\bpages{187 - 207}.
\end{barticle}
\endbibitem

\bibitem{schmeiser2018entropy}
\begin{bmanual}[author]
\bauthor{\bsnm{Schmeiser},~\bfnm{Christian}\binits{C.}}
(\byear{2018}).
\btitle{Entropy methods}
\bnote{\url{https://homepage.univie.ac.at/christian.schmeiser/Entropy-course.pdf}}.
\end{bmanual}
\endbibitem

\bibitem{SIGURGEIRSSON2001766}
\begin{barticle}[author]
\bauthor{\bsnm{Sigurgeirsson},~\bfnm{Hersir}\binits{H.}},
  \bauthor{\bsnm{Stuart},~\bfnm{Andrew}\binits{A.}} \AND
  \bauthor{\bsnm{Wan},~\bfnm{Wing-Lok}\binits{W.-L.}}
(\byear{2001}).
\btitle{Algorithms for Particle-Field Simulations with Collisions}.
\bjournal{J. Comput. Phys.}
\bvolume{172}
\bpages{766 - 807}.
\end{barticle}
\endbibitem

\bibitem{szekely2004testing}
\begin{barticle}[author]
\bauthor{\bsnm{Sz{\'e}kely},~\bfnm{G{\'a}bor~J}\binits{G.~J.}},
  \bauthor{\bsnm{Rizzo},~\bfnm{Maria~L}\binits{M.~L.}} \betal{et~al.}
(\byear{2004}).
\btitle{Testing for equal distributions in high dimension}.
\bjournal{InterStat}
\bvolume{5}
\bpages{1249--1272}.
\end{barticle}
\endbibitem

\bibitem{sznitman1991topics}
\begin{bincollection}[author]
\bauthor{\bsnm{Sznitman},~\bfnm{Alain-Sol}\binits{A.-S.}}
(\byear{1991}).
\btitle{Topics in propagation of chaos}.
In \bbooktitle{{\'E}c. {\'E}t{\'e} Probab. St.-Flour XIX---1989}
\bpages{165--251}.
\bpublisher{Springer}.
\end{bincollection}
\endbibitem

\bibitem{r}
\begin{bmanual}[author]
\bauthor{\bsnm{{R Core Team}}}
(\byear{2020}).
\btitle{R: A language and environment for statistical computing},
\baddress{Vienna, Austria}.
\end{bmanual}
\endbibitem

\bibitem{tolman1979principles}
\begin{bbook}[author]
\bauthor{\bsnm{Tolman},~\bfnm{Richard~Chace}\binits{R.~C.}}
(\byear{1979}).
\btitle{The principles of statistical mechanics}.
\bpublisher{Courier Corporation}.
\end{bbook}
\endbibitem

\bibitem{numpy}
\begin{barticle}[author]
\bauthor{\bsnm{Van Der~Walt},~\bfnm{Stefan}\binits{S.}},
  \bauthor{\bsnm{Colbert},~\bfnm{S~Chris}\binits{S.~C.}} \AND
  \bauthor{\bsnm{Varoquaux},~\bfnm{Gael}\binits{G.}}
(\byear{2011}).
\btitle{The {NumPy} array: a structure for efficient numerical computation}.
\bjournal{Computing in science \& engineering}
\bvolume{13}
\bpages{22--30}.
\end{barticle}
\endbibitem

\bibitem{vanetti2017piecewise}
\begin{barticle}[author]
\bauthor{\bsnm{Vanetti},~\bfnm{Paul}\binits{P.}},
  \bauthor{\bsnm{Bouchard-C{\^o}t{\'e}},~\bfnm{Alexandre}\binits{A.}},
  \bauthor{\bsnm{Deligiannidis},~\bfnm{George}\binits{G.}} \AND
  \bauthor{\bsnm{Doucet},~\bfnm{Arnaud}\binits{A.}}
(\byear{2017}).
\btitle{Piecewise-{D}eterministic {M}arkov {C}hain {M}onte {C}arlo}.
\bjournal{arXiv preprint arXiv:1707.05296}.
\end{barticle}
\endbibitem

\bibitem{verlet1967computer}
\begin{barticle}[author]
\bauthor{\bsnm{Verlet},~\bfnm{Loup}\binits{L.}}
(\byear{1967}).
\btitle{Computer" experiments" on classical fluids. I. Thermodynamical
  properties of Lennard-Jones molecules}.
\bjournal{Phys. Rev.}
\bvolume{159}
\bpages{98}.
\end{barticle}
\endbibitem

\bibitem{villani2003cercignani}
\begin{barticle}[author]
\bauthor{\bsnm{Villani},~\bfnm{C{\'e}dric}\binits{C.}}
(\byear{2003}).
\btitle{Cercignani's conjecture is sometimes true and always almost true}.
\bjournal{Comm. Math. Phys.}
\bvolume{234}
\bpages{455--490}.
\end{barticle}
\endbibitem

\bibitem{villani_optimal_2009}
\begin{bbook}[author]
\bauthor{\bsnm{Villani},~\bfnm{Cédric}\binits{C.}}
(\byear{2009}).
\btitle{Optimal {Transport}, {Old} and {New}}.
\bseries{Grundlehren der mathematischen {Wissenschaften}}
\bvolume{338}.
\bpublisher{Springer-Verlag Berlin Heidelberg}.
\bdoi{10.1007/978-3-540-71050-9}
\end{bbook}
\endbibitem

\bibitem{yang2012nystrom}
\begin{binproceedings}[author]
\bauthor{\bsnm{Yang},~\bfnm{Tianbao}\binits{T.}},
  \bauthor{\bsnm{Li},~\bfnm{Yu-Feng}\binits{Y.-F.}},
  \bauthor{\bsnm{Mahdavi},~\bfnm{Mehrdad}\binits{M.}},
  \bauthor{\bsnm{Jin},~\bfnm{Rong}\binits{R.}} \AND
  \bauthor{\bsnm{Zhou},~\bfnm{Zhi-Hua}\binits{Z.-H.}}
(\byear{2012}).
\btitle{{Nystr{\"o}m} method vs random {Fourier} features: A theoretical and
  empirical comparison}.
In \bbooktitle{Advances in neural information processing systems}
\bpages{476--484}.
\end{binproceedings}
\endbibitem

\bibitem{ziel2021energy}
\begin{barticle}[author]
\bauthor{\bsnm{Ziel},~\bfnm{Florian}\binits{F.}}
(\byear{2021}).
\btitle{The energy distance for ensemble and scenario reduction}.
\bjournal{Philos. Trans. Roy. Soc. A}
\bvolume{379}
\bpages{20190431}.
\end{barticle}
\endbibitem

\end{thebibliography}

\newpage

\begin{appendix}

\stitle{Supplementary proofs and numerical results}
\sdescription{Proof of the propagation of chaos, an application of the results to the classical Metropolis-Hastings case, numerical experiments on another target and normalizing constant estimation.}

\section{Proof of Theorem \ref{thm:pathwiseestimate}}\label{sec:proofpoc}

Let us start with the following lemma. 

\begin{lemma}\label{lemma:lipschitzratio}
Let $\Theta$ be a proposal distribution which satisfies Assumptions \ref{assum:bound}, \ref{assum:lipschitz} and \ref{assum:nonexpansive}. Then the map 
\[\alpha:\pb(E)\times E^2\to\R,\,\,(\mu,x,y)\mapsto \alpha_\mu(x,y):=\frac{\Theta_\mu(x|y)\pi(y)}{\Theta_\mu(y|x)\pi(x)},\]
is Lipschitz in the Wasserstein-1 distance, in the sense that there exists a constant $L_{\Theta}>0$ (which depends also on $\pi$) such that for all $\mu,\nu\in\pb(E)$ and $x,x',y,y'\in E$, it holds that: 
\[|\alpha_\mu(x,y)-\alpha_\nu(x',y')|\leq L_\Theta\big(W^1(\mu,\nu)+|x-x'|+|y-y'|\big).\]
\end{lemma}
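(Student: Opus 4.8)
The plan is to write $\alpha_\mu(x,y)$ as a product of four elementary factors and to use the elementary fact that a product of bounded Lipschitz functions is Lipschitz. Concretely, I would equip $\pb(E)\times E\times E$ with the ``distance''
\[
d\big((\mu,x,y),(\nu,x',y')\big):=W^1(\mu,\nu)+|x-x'|+|y-y'|,
\]
and decompose
\[
\alpha_\mu(x,y)=\Theta_\mu(x|y)\cdot\pi(y)\cdot\frac{1}{\Theta_\mu(y|x)}\cdot\frac{1}{\pi(x)},
\]
viewing each of the four factors as a function of the triple $(\mu,x,y)$.

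First I would record the boundedness and a Lipschitz constant of each factor for the distance $d$. The factor $(\mu,x,y)\mapsto\Theta_\mu(x|y)$ lies in $[\kappa_-,\kappa_+]$ by Assumption \ref{assum:bound} and is $L$-Lipschitz by Assumption \ref{assum:lipschitz} (the bound stated there holds for arbitrary spatial arguments, so exchanging the roles of the two spatial variables is harmless). By Assumption \ref{assum:pi}, $E$ is compact and $\pi$ is smooth and does not vanish, so $\pi$ and $1/\pi$ are bounded on $E$, with $\pi\in[m_0,M_0]$, and Lipschitz, say with constant $L_\pi:=\sup_E|\nabla\pi|$ for $\pi$ and $L_\pi/m_0^2$ for $1/\pi$ (using $|1/a-1/b|=|a-b|/(ab)$ with $a,b\geq m_0$). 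Similarly, since $\Theta_\mu(y|x)\geq\kappa_-$, the factor $(\mu,x,y)\mapsto1/\Theta_\mu(y|x)$ is bounded by $1/\kappa_-$ and is $(L/\kappa_-^2)$-Lipschitz for $d$, again by Assumption \ref{assum:lipschitz} together with $|1/a-1/b|=|a-b|/(ab)$.

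Then I would invoke the general estimate: if $f_1,\dots,f_n$ are real-valued functions on a metric space with $|f_k|\leq B_k$ and $f_k$ being $C_k$-Lipschitz, then $\prod_k f_k$ is $\big(\sum_k C_k\prod_{j\neq k}B_j\big)$-Lipschitz, which follows from the telescoping identity $\prod_k f_k(u)-\prod_k f_k(v)=\sum_k\big(\prod_{j<k}f_j(u)\big)\big(f_k(u)-f_k(v)\big)\big(\prod_{j>k}f_j(v)\big)$ and the triangle inequality. Applying this with $n=4$ and the constants collected above yields the claim, with
\[
L_\Theta:=\frac{L\,M_0}{\kappa_- m_0}+\frac{L_\pi\,\kappa_+}{\kappa_- m_0}+\frac{L\,\kappa_+ M_0}{\kappa_-^2 m_0}+\frac{L_\pi\,\kappa_+ M_0}{\kappa_- m_0^2},
\]
a constant depending only on $\kappa_-,\kappa_+,m_0,M_0,L$ and $L_\pi$.

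I do not expect a genuine obstacle: this is a routine ``product of bounded Lipschitz functions'' computation. The only points requiring mild care are that the $L^\infty$-Lipschitz bound of Assumption \ref{assum:lipschitz} must be applied to both $\Theta_\mu(x|y)$ and $\Theta_\mu(y|x)$ (legitimate, since it is stated for arbitrary spatial arguments), that Assumption \ref{assum:pi} is what guarantees $\pi$ and $1/\pi$ are bounded and Lipschitz on $E$, and that the uniform lower bound $\kappa_-$ from Assumption \ref{assum:bound} is needed both to bound the reciprocal $1/\Theta_\mu(y|x)$ and to control its increments.
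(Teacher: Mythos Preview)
Your proposal is correct and follows essentially the same approach as the paper: both arguments boil down to telescoping the product $\alpha_\mu(x,y)$ into factors that are individually bounded and Lipschitz (using Assumptions~\ref{assum:pi}, \ref{assum:bound}, \ref{assum:lipschitz}), and then combining. The only cosmetic difference is that the paper first groups the factors into the two ratios $\pi(y)/\pi(x)$ and $\Theta_\mu(x|y)/\Theta_\mu(y|x)$ and telescopes in two stages, whereas you treat the four factors simultaneously via the general product-of-Lipschitz lemma; this yields a slightly different (but equivalent up to constants) explicit value for $L_\Theta$. Note also that, as in the paper's proof, Assumption~\ref{assum:nonexpansive} is not actually used here.
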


\begin{proof}
By the triangle inequality, it holds that:
\begin{equation*}
    |\alpha_\mu(x,y)-\alpha_\nu(x',y')| \leq\frac{\pi(y)}{\pi(x)}\left|\frac{\Theta_\mu(x|y)}{\Theta_\mu(y|x)}-\frac{\Theta_\nu(x'|y')}{\Theta_\nu(y'|x')}\right|+\frac{\Theta_\nu(x'|y')}{\Theta_\nu(y'|x')}\left|\frac{\pi(y)}{\pi(x)}-\frac{\pi(y')}{\pi(x')}\right|.
\end{equation*}

We bound each of the two terms on the right-hand side: 
\begin{align*}
    \frac{\pi(y)}{\pi(x)}\left|\frac{\Theta_\mu(x|y)}{\Theta_\mu(y|x)}-\frac{\Theta_\nu(x'|y')}{\Theta_\nu(y'|x')}\right|&\leq\frac{\pi(y)}{\pi(x)\Theta_\mu(y|x)}|\Theta_\mu(x|y)-\Theta_\nu(x'|y')|\\
    &\qquad+\frac{\pi(y)\Theta_\nu(x'|y')}{\pi(x)\Theta_\mu(x|y)\Theta_\mu(x'|y')}|\Theta_\mu(y|x)-\Theta_\nu(y'|x')|\\
    &\leq \frac{LM_0}{m_0\kappa_-}\left(1+\frac{\kappa_+}{\kappa_-}\right)\Big(W^1(\mu,\nu)+|x-x'|+|y-y'|\Big).
\end{align*}
and
\begin{align*}
    \frac{\Theta_\nu(x'|y')}{\Theta_\nu(y'|x')}\left|\frac{\pi(y)}{\pi(x)}-\frac{\pi(y')}{\pi(x')}\right|&\leq \frac{\Theta_\nu(x'|y')}{\pi(x)\Theta_\nu(y'|x')}|\pi(y)-\pi(y')|\\
    &\qquad\qquad+\frac{\Theta_\nu(x'|y')\pi(y')}{\Theta_\nu(y'|x')\pi(x)\pi(x')}|\pi(x)-\pi(x')|\\
    &\leq \frac{\|\pi\|_\mathrm{Lip}\kappa_+}{ m_0\kappa_-}\left(|y-y'|+\frac{M_0}{m_0}|x-x'|\right),
\end{align*}
where $\|\pi\|_\mathrm{Lip}$ denotes the Lipschitz norm of $\pi$. Gathering everything gives the result with 
\[L_\Theta = \frac{M_0}{m_0}\left(\frac{L}{\kappa_-}\left(1+\frac{\kappa_+}{\kappa_-}\right)+\frac{\|\pi\|_\mathrm{Lip}\kappa_+}{ m_0\kappa_-}\right).\]

\end{proof}

\begin{proof}[Proof (of Theorem \ref{thm:pathwiseestimate})]

The strategy of the proof of Theorem \ref{thm:poc} will be based on coupling arguments inspired by \cite{sznitman1991topics} and adapted from \cite{diez2019propagation}. We start by the following trajectorial representation of the nonlinear Markov chain $(\overline{X}_t)_t$ defined by the transition kernel~\eqref{eq:cmckernel}. 

\begin{definition}[Nonlinear process]\label{def:nonlinearprocess}
Let $\overline{X}_0\sim \mu_0$ be an initial state where $\mu_0\in\mathcal{P}(E)$. The state $\overline{X}_t$ at time $t\in\N$, $t\geq1$, is constructed from $\overline{X}_{t-1}$ and the law of $\overline{X}_{t-1}$ denoted by $\mu_{t-1}\in\mathcal{P}(E)$ as follows. 
\begin{enumerate}
\item Take a proposal a random variable $\overline{Y}_t\sim \Theta_{\mu_{t-1}}(\cdot | \overline{X}_{t-1})$ 
\item Compute the ratio 
\[\alpha_{\mu_{t-1}}(\overline{X}_{t-1},\overline{Y}_{t}):=\frac{\Theta_{\mu_{t-1}}(\overline{X}_{t-1}|\overline{Y}_{t})\pi(\overline{Y}_{t})}{\Theta_{\mu_{t-1}}(\overline{Y}_{t}|\overline{X}_{t-1})\pi(\overline{X}_{t-1})}.\]
\item Take $\overline{U}_{t}\sim \mathcal{U}([0,1])$ and if $\overline{U}_t\leq h\big({\alpha}_{\mu_{t-1}}(\overline{X}_{t-1},\overline{Y}_{t})\big)$, then accept the proposal, else reject it:
\[\overline{X}_t = \overline{X}_{t-1}\1_{\overline{U}_t\geq h({\alpha}_{\mu_{t-1}}(\overline{X}_{t-1},\overline{Y}_{t}))}+\overline{Y}_t\1_{\overline{U}_t\leq h({\alpha}_{\mu_{t-1}}(\overline{X}_{t-1},\overline{Y}_{t}))}.\]
\end{enumerate}
\end{definition}

From now on we consider $N$ independent copies $(\overline{X}^i_t)^{}_t$, $i\in\{1,\ldots,N\}$, of the nonlinear process defined by Definition \ref{def:nonlinearprocess}. We then construct a coupled particle process $(X^i_t)^{}_t$ such that for all $i\in\{1,\ldots,N\}$, initially $X^i_0=\overline{X}^i_0\sim \mu_0$ and for each time $t\in\N$ we take:
\begin{enumerate}
\item the same jump decision random variables $U^i_t=\overline{U}^i_t\sim\mathcal{U}([0,1])$,
\item optimal proposals of the form $Y^i_t=s(\overline{Y}^i_t)$ where $s$ is an optimal transport map between $\Theta_{\mu_{t-1}}(\cdot|\overline{X}^i_{t-1})$ and $\Theta_{\hat{\mu}^N_{t-1}}(\cdot|{X}^i_{t-1})$. Since these two probability measures are absolutely continuous with respect to the Lebesgue measure, the existence of such optimal transport map (Monge problem) is proved for instance in \cite{champion2011monge} or \cite{caffarelli2002constructing}. By definition, the pathwise error between the proposals can thus be controlled by 
\begin{align*}\E\big[|Y^i_t-\overline{Y}^i_t| \big| \mathcal{F}_{t-1}\big] &= W^1\left(\Theta_{\hat{\mu}^N_{t-1}}(\cdot|{X}^i_{t-1}),\Theta_{\mu_{t-1}}(\cdot|\overline{X}^i_{t-1})\right)\\
&\leq W^1\left(\Theta_{\hat{\mu}^N_{t-1}}(\cdot|{X}^i_{t-1}),\Theta_{\bar{\mu}^N_{t-1}}(\cdot|\overline{X}^i_{t-1})\right)\\
&\qquad\qquad+W^1\left(\Theta_{\bar{\mu}^N_{t-1}}(\cdot|\overline{X}^i_{t-1}),\Theta_{\mu_{t-1}}(\cdot|\overline{X}^i_{t-1})\right)
\end{align*}
where $\bar{\mu}^N_t=\frac{1}{N}\sum_{i=1}^N \delta_{\overline{X}^i_t}$ and $\mathcal{F}_t$ is the $\sigma$-algebra generated by the processes up to time $t\in\N$. We conclude that:
\begin{equation}\label{optimalproposal}
\E\big[|Y^i_t-\overline{Y}^i_t| \big| \mathcal{F}_{t-1}\big]\leq W^1(\hat{\mu}^N_{t-1},\bar{\mu}^N_{t-1})+|X^i_{t-1}-\overline{X}^i_{t-1}|+\epsilon_t^N
\end{equation}
where the error term $\epsilon_t^N$ only depends on (the laws of) the $N$ independent nonlinear processes $(\overline{X}^i_t)_t$:
\[\epsilon_t^N:= W^1(\bar{\mu}^N_{t-1},\mu_{t-1}).\]
\end{enumerate}

Let $t\in\N$, $t\geq1$. It holds that: 
\begin{align*}
\E\big[|X^i_t-\overline{X}^i_t| \big| \mathcal{F}_{t-1}\big] &=\E\big[|Y^i_t-\overline{Y}^i_t|\1_{U^i_t\leq\min(h^i_t,\overline{h}^i_t)} \big| \mathcal{F}_{t-1}\big]\\[0.15cm]
&\qquad+ |X^i_{t-1}-\overline{X}^i_{t-1}|\PP\big(U^i_t\geq\max(h^i_t,\overline{h}^i_t)\big|\mathcal{F}_{t-1}\big)\\[0.15cm]
&\qquad\qquad+\E\big[|X^i_t-\overline{Y}^i_t|\1_{h^i_t\leq U^i_t\leq \overline{h}^i_t} \big| \mathcal{F}_{t-1}\big]\\[0.15cm]
&\qquad\qquad\qquad+\E\big[|Y^i_t-\overline{X}^i_t|\1_{\overline{h}^i_t\leq U^i_t\leq {h}^i_t} \big| \mathcal{F}_{t-1}\big]
\end{align*}
where we write for short: 
\[h^i_t\equiv h\big(\alpha_{\hat{\mu}^N_t}(X^i_t,Y^i_t)\big)\quad\text{and}\quad\overline{h}^i_t\equiv h\big(\alpha_{\mu_t}(\overline{X}^i_t,\overline{Y}^i_t)\big).\]

we deduce that: 
\begin{multline}\label{EXXbar}
\E\big[|X^i_t-\overline{X}^i_t| \big| \mathcal{F}_{t-1}\big]\leq W^1(\hat{\mu}^N_{t-1},\bar{\mu}^N_{t-1})+2|X^i_{t-1}-\overline{X}^i_{t-1}|\\
+2M_0(\PP(h^i_t\leq U^i_t\leq \overline{h}^i_t \big| \mathcal{F}_{t-1})+\PP(\overline{h}^i_t\leq U^i_t\leq {h}^i_t \big| \mathcal{F}_{t-1}))+\epsilon_t^N
\end{multline}
The last two probabilities are bounded by $\E\big[|h^i_t-\overline{h}^i_t|\big|\mathcal{F}_{t-1}\big]$. Assuming that ${h}$ is $L_h$-Lipschitz for a constant $L_h>0$, it holds that: 
 \[|h^i_t-\overline{h}^i_t|\leq L_h\Big|{\alpha}_{\hat{\mu}_{t-1}}(X^i_{t-1},Y^i_{t})-{\alpha}_{\mu_{t-1}}(\overline{X}^i_{t-1},\overline{Y}^i_{t})\Big|.\]
 Let $\bar{\mu}^N_t$ be the empirical measure of the $N$ nonlinear Markov processes $\overline{X}^i_t$ at time $t$. It holds that: 
 \begin{multline*}
 |h^i_t-\overline{h}^i_t|\leq L_h\Big|{\alpha}_{\hat{\mu}_{t-1}}(X^i_{t-1},Y^i_{t})-{\alpha}_{\bar{\mu}_{t-1}}(\overline{X}^i_{t-1},\overline{Y}^i_{t})\Big|\\
 +L_h\Big|{\alpha}_{\bar{\mu}_{t-1}}(\overline{X}^i_{t-1},\overline{Y}^i_{t})-{\alpha}_{\mu_{t-1}}(\overline{X}^i_{t-1},\overline{Y}^i_{t})\Big|\end{multline*}
 Using Lemma \ref{lemma:lipschitzratio} we get:
 \[|h^i_t-\overline{h}^i_t|\leq C_{\Theta}\left(W^1(\hat{\mu}^N_{t-1},\bar{\mu}^N_{t-1})+|X^i_{t-1}-\overline{X}^i_{t-1}|+|Y^i_t-\overline{Y}^i_t|+\epsilon^N_t\right),\]
 with $C_\Theta := L_hL_\Theta$. Therefore: 
\[\E\big[|X^i_t-\overline{X}^i_t| \big| \mathcal{F}_{t-1}\big]\leq \left(1+C_{\Theta}\right)\big(|X^i_{t-1}-\overline{X}^i_{t-1}|+W^1(\hat{\mu}^N_{t-1},\bar{\mu}^N_{t-1})\big)+(1+2C_{\Theta})\epsilon_t^N.\]

Let us define: 
\[S_t :=\frac{1}{N}\sum_{i=1}^N |X^i_t-\overline{X}^i_{t}|.\]
Summing the previous expression for $i$ from 1 to $N$ and dividing by $N$ gives the following estimate for $S_t$:

\begin{equation}\label{eq:estf}\E[S_t|\mathcal{F}_{t-1}]\leq2\left(1+C_{\Theta}\right)S_{t-1}+(1+2C_{\Theta})\epsilon_t^N\end{equation}
where we have used the fact that 
\[W^1(\hat{\mu}^N_{t-1},\bar{\mu}^N_{t-1})\leq S_{t-1}.\]
Taking the expectation in \eqref{eq:estf}, we deduce that: 
\begin{equation}\label{gronwallestimate}
\E[S_t]\leq\left(1+C_{\Theta}\right)\E[S_{t-1}] + C_{\Theta}\E[\epsilon_t^N]
\end{equation}
where the value of the constant $C_\Theta$ has been updated by $C_{\Theta}\leftarrow1+2C_{\Theta}$.

The error term can be controlled uniformly on $t$ using \cite[Theorem 5.8]{carmona2018probabilistic} or \cite{fournier2015rate}. In particular, since $\pi$ is a smooth probability density function on a compact set, it has finite moments of all orders and therefore it follows from \cite[Theorem 1]{fournier2015rate} that:
\begin{equation}\label{errorestimate}\forall t\in\N,\,\,\E[\epsilon_t^N]\leq \beta(N)\end{equation}
where $\beta(N)$ is defined by \eqref{eq:glivenkocantelli}.

One can easily prove by induction that: 

\begin{equation}
\E[S_t] \leq C_{\Theta}\beta(N)\sum_{s=0}^{t-1}\e^{C_{\Theta}s}\leq\beta(N)\frac{C_{\Theta}}{\e^{C_{\Theta}}-1}\e^{tC_{\Theta}}
\end{equation}

By symmetry of the processes, all the quantities $\E[|X^i_t-\overline{X}^i_t|]$
are equal and their common value is $\E[S_t]$. The result follows.
\end{proof}

\begin{rem}[Moderate interaction, part 2]\label{rem:moderateinteraction2} The result of Theorem \ref{thm:pathwiseestimate} provides an explicit convergence rate in terms of $N$. This could be used to understand more precisely the moderate interaction assumption mentioned in Remark~\ref{rem:moderateinteraction} in order to justify the study of the degenerate proposal distribution $\Theta_\mu(\dd y|x) = \mu(\dd y)$ in the nonlinear Markov chain with transition kernel \eqref{eq:cmckernel}. In the case $\Theta_\mu(\dd y|x)=K\star\mu(y)\dd y$, one can take at the particle level (\emph{i.e} in Algorithm \ref{algo:cmc}) an interaction kernel $K\equiv K^N$ which depends on $N$. Typically, one can consider $K^N$ equal to the density of a centered Gaussian law with covariance $\sigma_N I_d$, $\sigma_N>0$ or the normalized indicator of the ball centered at zero and with radius $\sigma_N$. The goal is to take the size $\sigma_N\to0$ as $N\to+\infty$ (and thus $K^N\to\delta_0$). As a consequence the constant $C_\Theta\equiv C_\Theta^N$ in Theorem \ref{thm:pathwiseestimate} would depend on $N$. Since we have a precise control on $\beta(N)$ we can choose $\sigma_N$ such that the following convergence still holds for all $t\leq T$ smaller than a fixed finite value~$T\in(0,+\infty)$: 
\[\beta(N)\e^{tC_\Theta^N}\underset{N\to+\infty}{\longrightarrow}0.\]
In particular, $\sigma_N$ should not converges to zero too fast, justifying the moderate interaction terminology introduced in \cite{Oelschl_ger_1985}. In the limit $N\to+\infty$ we then obtain that the emprirical measure $\hat{\mu}^N_t$ converges towards the $t$-th iterate of the transition operator \eqref{eq:transitionoperator} with the degenerate choice of proposal distribution $\Theta_f(\dd y|x)=f(y)\dd y$ which makes sense as soon as $f\in\pdf_0(E)$. We refer the reader to \cite{jourdain1998propagation} and \cite{diez2019propagation} for two examples of propagation of chaos results under a moderate interaction assumption. Note that this result is mainly of theoretical interest as it does not give sharp estimates on how slow $\sigma_N$ should decrease to zero. Moreover, similarly to Theorem \ref{thm:convergencecmc}, this result is only valid when $N\to+\infty$ on a finite bounded time interval. 

\end{rem}

\begin{rem}[About the assumptions]\label{rem:aboutassumptions} In order to prove a propagation of chaos property, it is usually assumed that the parameters of the problem are Lipschitz \cite{sznitman1991topics,meleard1996asymptotic}. This corresponds to the two Lipschitz assumptions \ref{assum:lipschitz} and \ref{assum:nonexpansive}. Propagation of chaos in non-Lipschitz settings is a more difficult problem (see for instance \cite{jabin2016mean} for a recent result). 

Assumption \ref{assum:pi} and Assumption \ref{assum:bound} should be understood as technical assumptions. In the proof of Theorem \ref{thm:poc}, we use the fact that the acceptance ratio is Lipschitz (Lemma \ref{lemma:lipschitzratio}) which follows directly from Assumptions \ref{assum:pi}, \ref{assum:bound} and \ref{assum:lipschitz}. However, we could relax the compactness assumption \ref{assum:pi} and keep the same Lipschitz property by replacing Assumptions \ref{assum:pi}, \ref{assum:bound} and \ref{assum:lipschitz} by the following assumption. 
\begin{assumption} The target distribution $\pi$ does not vanish on $E$ and the map 
\[\pb(E)\times E^2\to \R,\quad(\mu,x,y)\mapsto g_\mu(y|x):=\frac{\Theta_\mu(y|x)}{\pi(y)}\]
satisfies the two following properties.
\begin{itemize}
\item \textbf{(Boundedness).} There exists two constants $\kappa_->0$ and $\kappa_+>0$ such that
\[\forall (x,y)\in E^2,\quad\kappa_-\leq g(y|x)\leq \kappa_+.\]
\item \textbf{(Lipschitz).} There exists a constant $L>0$ such that
\begin{multline*}\forall (\mu,x,y),(\nu,x',y')\in\pb(E)\times E^2,\\ |g_\mu(y|x)-g_\nu(y'|x')|\leq \Big(W^1(\mu,\nu)+|x-x'|+|y-y'|\Big).\end{multline*}
\end{itemize}
\end{assumption}
In practice, this would necessitate a precise control of the tails of $\pi$ and of the proposal distribution. It seems easier for us to check the compactness and boundedness assumptions \ref{assum:pi}, \ref{assum:bound} and \ref{assum:lipschitz} (possibly up to truncating the support of $\pi$ and replacing it by a compact set). 
\end{rem}

\section{Continuous time version}\label{appendix:continuoustime}

As mentioned in Section \ref{sec:convergence}, entropy methods are widely used in a continuous-time context, in particular for the long-time analysis of nonlinear PDEs \cite{jungel2016entropy}. In this section, we define a continuous-time version of Algorithm \ref{algo:cmc}, we show that the mean-field limit towards the solution of a nonlinear PDE and we prove the exponential long-time convergence of its solution towards $\pi$. This last step also motivates the discrete-time analysis in Section~\ref{sec:convergence}. Finally, the application of these ideas to the classical linear Metropolis-Hastings case leads (formally) to a convergence result obtained earlier in \cite{diaconis2011geometric} with other techniques (Section \ref{sec:metropolis}).

\subsection{Continuous-time particle system}

The $N$-particle system constructed by Algorithm \ref{algo:cmc} is a standard Markov chain which can be turned into a continuous-time Markov process by subordinating it to a Poisson process, as explained in \cite[Chapter 8, Definition 2.2]{bremaud}. Namely, let $(N_t)_{t\geq0}$ be a Poisson process independent of all the other random variables and with constant parameter equal to 1. The continuous-time particle system is defined at time $t\geq0$ by $\tilde{X}^i_t := X^i_{N_t}$ for $i\in\{1,\ldots,N\}$, where $X^i_{N_t}$ is the $i$-th particle constructed by Algorithm \ref{algo:cmc} at iteration $N_t$. For notational simplicity, in the remaining of this section, we drop the tilde notation and simply write $X^i_t\equiv\tilde{X}^i_t$ with $t\in[0,+\infty)$. 

The first step is the mean-field limit analog of Theorem \ref{thm:pathwiseestimate}. 

\begin{theorem}\label{thm:EW1tcontinuous}
Let the particles $X^i_0\sim f_0$ be initially i.i.d. with common law $f_0\in\pdf_0(E)$. Under the assumptions of Theorem \ref{thm:pathwiseestimate}, the (random) continuous-time empirical measure $\hat{\mu}^N_t = \frac{1}{N}\sum_{i=1}^N \delta_{X^i_t}$ at any time $t\geq0$ satisfies
\[\E W^1(\hat{\mu}^N_t, f_t) \leq C_1\beta(N)\e^{tC_2},\]
where $C_1,C_2>0$ are two absolute constants, $\beta(N)$ is defined by \eqref{eq:glivenkocantelli} and $f_t$ is the solution of the nonlinear PDE
\begin{equation}\label{eq:nonlinearpde_appendix}\partial_t f_t = \mathcal{T}[f_t]-f_t,\end{equation}
with initial condition $f_0$. 
\end{theorem}

\begin{proof}
It is enough to prove the analog of \eqref{eq:couplingbound} in the time continuous framework. The end of the proof will then follows similarly as in Corollary \ref{thm:poc}. The nonlinear process $(\overline{X}_t)_t$ with law $f_t$ is defined as follows. 
\begin{enumerate}
    \item Let $(T_n)_{n\in\N}$ the increasing sequence of jump times of the Poisson process $(N_t)_t$ with $T_0=0$ and initially $\overline{X}_0 \sim f_0$. 
    \item Between two jump times, at any $t\in[T_n,T_{n+1})$, $\overline{X}_t=\overline{X}_{T_n}$. 
    \item At each jump time $T_n$, we sample a proposal random variable $\overline{Y}_{T_n}\sim \Theta_{f_{T_n}}(\cdot|\overline{X}_{T_n})$ and accept or reject it as in Definition \ref{def:nonlinearprocess}. 
\end{enumerate}
Then we construct two coupled systems of $N$ independent nonlinear processes $(\overline{X}^i_t)^{}_t$ and $N$ particles as in the proof of Theorem \ref{thm:pathwiseestimate}. We define similarly $S_t := \frac{1}{N}\sum_{i=1}^N \E|X^i_t-\overline{X}^i_t|$. As in the discrete time case \eqref{eq:estf}, at any jump time $T_n$, it holds that
\[\E[S_{T_n}|\mathcal{F}_{T_n^-}]\leq2\left(1+C_{\Theta}\right)S_{T_{n-1}}+C_{\Theta}\beta(N).\]
Taking the conditional expectation with respect to $\mathcal{G}_n=\sigma(T_1,T_2-T_1,\ldots,T_n-T_{n-1})$, we obtain: 
\[\E[S_{T_n}|\mathcal{G}_n]\leq2\left(1+C_{\Theta}\right)\E[S_{T_{n-1}}|\mathcal{G}_{n-1}]+C_{\Theta}\beta(N).\]
And thus, for all $n\in\N$, 
\[\E[S_{T_n}|\mathcal{G}_n]\leq \beta(N)\frac{C_{\Theta}}{\e^{C_{\Theta}}-1}\e^{C_{\Theta} n}.\]
Since $N_t$ follows a Poisson law with parameter $t$, it holds that: 
\begin{align*}
    \E[S_{T_n}\1_{N_t=n}]&=\E[\E[S_{T_n}\1_{N_t=n}|\mathcal{G}_{n+1}]]\\
    & = \E[\1_{N_t=n}\E[S_{T_n}|\mathcal{G}_n]]\\
    &\leq \beta(N)\frac{C_{\Theta}}{\e^{C_{\Theta}}-1}\e^{C_{\Theta} n}\PP(N_t = n),
\end{align*}
where the second inequality comes from the fact that the event $\{N_t = n\}$ is $\mathcal{G}_{n+1}$ measurable and the fact that $S_{T_n}$ is independent from $T_{n+1}-T_n$. As a consequence, since $S_t=S_{T_{N_t}}$ and $\PP(N_t = n)=\e^{-t}t^n/n!$, we conclude that:
\[\E[S_t]=\E[S_{T_{N_t}}]=\sum_{n=0}^{+\infty}\E[S_{T_n}\1_{N_t=n}] \leq \beta(N)\frac{C_{\Theta}}{\e^{C_{\Theta}}-1}\exp(t(\e^{C_{\Theta}}-1)).\]
\end{proof}

\subsection{Convergence of the nonlinear process}\label{appendix:convergencentropy}

The analog of Theorem \ref{thm:convergencenonlineardiscrete} in the continuous-time setting is the following theorem. 

\begin{theorem}[Convergence of the Nonlinear Process]\label{thm:convergencetopi}
Let $\Theta$ a proposal distribution which satisfies Assumption \ref{assum:monotonicity}. Let $f_0\in\pdf_0(E)$ and let $f_t$ be the solution at time $t\in[0,+\infty)$ of the nonlinear PDE \eqref{eq:nonlinearpde_appendix}. Then for all $t\geq0$, it holds that:
\[\|f_t-\pi\|_{\mathrm{TV}}\leq C_0\e^{-\lambda t},\]
where $C_0>0$ depends only on $f_0$ and $\pi$ and where
\begin{equation}\label{eq:lambda}\lambda:=c^-\left(\inf_x \frac{f_0(x)}{\pi(x)}\right)h(1)>0.\end{equation}
\end{theorem}

Note that in the continuous-time case, Assumption \ref{assum:hbounded} is not required.

We recall that given a convex function $\phi:[0,+\infty)\to[0,+\infty)$ such that $\phi(1)=0$, the relative entropy $\mathcal{H}[f|\pi]$ and dissipation $\mathcal{D}[f|\pi]$ of a probability density $f\in\pdf(E)$ with respect to $\pi$ are defined respectively by
\begin{equation*}\mathcal{H}[f|\pi] := \int_E \pi(x) \phi{\left(\frac{f(x)}{\pi(x)}\right)} \dd x,\quad \mathcal{D}[f|\pi] := -\int_E \phi'{\left(\frac{f(x)}{\pi(x)}\right)}\big(\mathcal{T}[f](x)-f(x)\big)\dd x.\end{equation*}

In the continuous time framework when $f_t$ solves \eqref{eq:nonlinearpde_appendix}, the entropy and dissipation are simply linked by the relation 
\begin{equation}\label{eq:entropydissipation}\frac{\dd}{\dd t}\mathcal{H}[f_t|\pi] = -\mathcal{D}[f_t|\pi]\leq 0,\end{equation}
and therefore we conclude immediately that the entropy is non increasing (which is Lemma \ref{lemma:discreteentropynonincreasing} in the discrete setting). A consequence of this fact is an alternative proof of Lemma \ref{lemma:infsupdiscrete}. 

\begin{lemma}\label{lemma:infsup_appendix} Let $f_t$ be the solution of the integro-differential equation \eqref{eq:nonlinearpde_appendix} with initial condition $f_0\in\pdf_0(E)$. Then 
\[\inf_{x\in E} \frac{f_t(x)}{\pi(x)}\geq \inf_{x\in E} \frac{f_0(x)}{\pi(x)},\quad \sup_{x\in E} \frac{f_t(x)}{\pi(x)}\leq \sup_{x\in E} \frac{f_0(x)}{\pi(x)}.\]
\end{lemma}

\begin{proof} Let us denote
\[m:=\inf_{x\in E} \frac{f_0(x)}{\pi(x)}\quad\text{and}\quad M:=\sup_{x\in E} \frac{f_0(x)}{\pi(x)}.\]
Let us take $\phi:[0,+\infty)\to[0,+\infty)$ a convex function such that $\phi\equiv0$ on the segment $[m,M]$ and $\phi>0$ elsewhere. Note that since $f_0$ and $\pi$ are both probability densities, it holds that $m<1$ and $M>1$ and thus $\phi(1)=0$. The entropy-dissipation relation \eqref{eq:entropydissipation} gives: 
\[\frac{\dd}{\dd t} \mathcal{H}[f_t|\pi] \leq 0\]
and therefore for all $t\geq0$,
\[\mathcal{H}[f_t|\pi]\leq \mathcal{H}[f_0|\pi] =0\]
by definition of $\phi$. As a consequence and since $\phi\geq0$ and $\pi>0$ on $E$, it holds that for all $t\geq0$ and all $x\in E$, 
\[\phi\left(\frac{f_t(x)}{\pi(x)}\right)=0,\]
which implies that 
\[\forall x\in E,\quad m\leq \frac{f_t(x)}{\pi(x)}\leq M.\]
\end{proof}

In order to prove Theorem \ref{thm:convergencetopi}, we follow the classical steps which are detailed for instance in \cite[Section 1.3]{jungel2016entropy} and can be applied to various linear and nonlinear jump and diffusion processes. 

\begin{enumerate}
    \item Compute the dissipation $\mathcal{D}[f_t|\pi]=-\frac{\dd}{\dd t} \mathcal{H}[f_t|\pi]$.
    \item Prove that the dissipation can be bounded from below by a multiple of the entropy: for a constant $\lambda>0$,
    \[\mathcal{D}[f_t|\pi]\geq \lambda \mathcal{H}[f_t|\pi].\]
    \item Apply Gronwall lemma to the relation $\frac{\dd}{\dd t} \mathcal{H}[f_t|\pi]\leq \lambda \mathcal{H}[f_t|\pi]$ to obtain the exponential decay of the entropy: \[\mathcal{H}[f_t|\pi]\leq \mathcal{H}_0 \e^{-\lambda t}.\]
    \item Show that the entropy controls the TV distance and conclude that for some constants $c, C_0>0$: 
    \[\|f_t-\pi\|_{\mathrm{TV}}\leq c \mathcal{H}[f_t|\pi]\leq C_0 \e^{-\lambda t}.\]
\end{enumerate}

\begin{proof}[Proof (of Theorem \ref{thm:convergencetopi})]
Let $\phi(s)=\frac{1}{2}(s-1)^2$. Most of the computations are the same as in the discrete-time case. From the entropy-dissipation relation \eqref{eq:entropydissipation}, Lemma~\ref{lemma:infsup_appendix} and Lemma \ref{lemma:boundbelowW}, it follows that
\begin{multline*}\frac{\dd}{\dd t}\mathcal{H}[f_t|\pi] = -\mathcal{D}[f_t|\pi]\\ \leq -\frac{c^-(m)h(1)}{2}\iint_{E\times E}\pi(x)\pi(y)\left|\frac{f_t(x)}{\pi(x)}-\frac{f_t(y)}{\pi(y)}\right|^2\dd x\dd y=-2c^-(m)h(1)\mathcal{H}[f_t|\pi],
\end{multline*}
where $m:=\inf_{x\in E}f_0(x)/\pi(x)$. Using Gronwall's inequality we then deduce that: 
\[\mathcal{H}[f_t|\pi]\leq \mathcal{H}[f_0|\pi]\e^{-2c^-(m)h(1) t}.\]
The conclusion follows from the Cauchy-Schwarz inequality by writing
\[\|f_t-\pi\|_{\mathrm{TV}}=\int_E |f_t(x)-\pi(x)|\dd x = \int_E \sqrt{\pi}\sqrt{\pi}\left|\frac{f_t(x)}{\pi(x)}-1\right|\dd x \leq \sqrt{2\mathcal{H}[f_t|\pi]},\]
where we have used the fact that the TV norm is equal to the $L^1$ norm of the probability density functions. 
\end{proof}

\begin{rem} Another natural choice for $\phi$ would be $\phi(s)=s\log s-s+1$. The relative entropy is in this case equal to the Kullback-Leibler divergence. However, the dissipation term becomes in this case: 
\begin{multline*}\mathcal{D}[f_t|\pi] = \frac{1}{2}\iint_{E\times E} W_{f_t}(x\to y)\pi(x)\left(\frac{f_t(x)}{\pi(x)}-\frac{f_t(y)}{\pi(y)}\right)\\
\times\left(\log\left(\frac{f_t(x)}{\pi(x)}\right)-\log\left(\frac{f_t(y)}{\pi(y)}\right)\right)\dd x\dd y,\end{multline*}
and it is not clear that it can be bounded from below by the relative entropy. Note that this dissipation functional is very similar to the one obtained in the study of the Boltzmann equation (in this context, the Kullback-Leibler divergence is also called the Boltzmann entropy). The long-time asymptotics of this equation is a long-standing problem and the specific question of whether the dissipation controls the entropy is the object of a famous conjecture by Cercignani \cite{desvillettes2001celebrating,villani2003cercignani}. In our case, we know that the Kullback-Leibler divergence is decreasing with time but all this suggests that its exponential decay could be harder to obtain or could hold only in specific cases.
\end{rem}

Putting together Theorem \ref{thm:EW1tcontinuous} and Theorem \ref{thm:convergencetopi} leads to the following corollary which is the continuous-time analog of Theorem \ref{thm:convergencecmc}. 

\begin{corollary} Under the assumptions of Theorem \ref{thm:EW1tcontinuous} and Theorem \ref{thm:convergencetopi}, for any $t>0$ and $N>1$ it holds that
\[\E[W^1(\hat{\mu}^N_t,\pi)] \leq C_1 \beta(N)\e^{C_2t} + C_3 \e^{-\lambda t},\]
where $\beta(N)$ is given by \eqref{eq:glivenkocantelli}, $\lambda>0$ is given by \eqref{eq:lambda} and $C_1,C_2,C_3>0$ are absolute constants.
\end{corollary}

\begin{proof}
By the triangle inequality, it holds that
\[\E[W^1(\hat{\mu}^N_t,\pi)] \leq \E[W^1(\hat{\mu}^N_t,f_t)] + \E[W^1(f_t,\pi)].\]
The first term on the right-hand side is bounded by $C_1 \beta(N)\e^{C_2t}$ by Theorem \ref{thm:EW1tcontinuous}. For the second term on the right-hand side, we first note that on the compact set $E$, the total variation norm controls the Wasserstein-1 distance \cite[Theorem 6.15]{villani_optimal_2009}. The conclusion therefore follows from Theorem \ref{thm:convergencetopi}. 
\end{proof}

\subsection{Links between the discrete- and continuous-time versions}\label{appendix:linksdiscretecontinuous}

The discrete-time counterpart of the entropy-dissipation relation \eqref{eq:entropydissipation} is the relation \eqref{eq:entropydissipationtaylor}. The difference $\mathcal{H}[\mu_{t+1}|\pi]-\mathcal{H}[\mu_t|\pi]$ is the discrete analog of a time derivation. The main difference with the continuous-time entropy-dissipation relation is the additional non negative term on the right-hand side of \eqref{eq:entropydissipationtaylor}. The role of the technical Assumption \ref{assum:hbounded} is to ensure that this non negative term remains smaller than the dissipation in order to close the argument as in the continuous-time case. Since this term does not appear in the continuous-time setting, Assumption \ref{assum:hbounded} is not required to prove Theorem \ref{thm:convergencetopi}. 

Assumption \ref{assum:hbounded} is actually better understood when the discrete-time relation \eqref{eq:recmu} is seen as an explicit Euler discretization scheme \eqref{eq:expliciteuler} of the nonlinear PDE \eqref{eq:nonlinearpde_appendix}. More precisely, the numerical scheme 
\eqref{eq:expliciteuler} can be re-written
\begin{equation}\label{eq:discretescheme_appendix}\mu_{t+1} = \mu_t+\Delta t Q[\mu_t],\end{equation}
where
\[Q[\mu](\dd x) := \int_E \pi(x)W_\mu(x\to y){\left(\frac{\mu(\dd y)}{\pi(y)}\dd x - \frac{\mu(\dd x)}{\pi(x)}\dd y\right)}.\]
Note that $\mathcal{T}[\mu]=\mu+Q[\mu]$. In order to check that this numerical scheme preserves the continuous-time entropy-dissipation relation \eqref{eq:entropydissipation}, let us write for a general function the second-order Taylor expansion: 
\[\mathcal{H}[\mu_{t+1}|\pi] \simeq \mathcal{H}[\mu_t|\pi] - \Delta t\mathcal{D}[\mu_t|\pi]+\frac{\Delta t^2}{2}\int_E \pi(x)\phi''{\left(\frac{\mu_t(x)}{\pi(x)}\right)}{\left|\frac{Q[\mu_t](x)}{\pi(x)}\right|}^2\dd x.\]
Since $\phi$ is a convex function, the second-order term of the Taylor expansion is non negative but it is dominated by the non positive first-order term (equal to minus the dissipation) for $\Delta t$ small enough. For this reason, the explicit Euler discretization scheme does not unconditionally preserve the entropy structure of the continuous-time PDE. Since $W_\mu(x\to y)=\Theta_\mu(y|x)h(\alpha_\mu(x,y))$, the operator $Q[\mu]$ is proportional to $h$ and it is equivalent to assume $\Delta t<1$ or $h<1$. With the notations of Section \ref{sec:exponentialdecaydiscrete}, the time-step is $\Delta t\equiv\eta$ and Assumption \ref{assum:hbounded} can therefore be interpreted as a ``numerical'' condition to preserve the entropy structure of the discrete scheme. We did not manage to prove the exponential convergence of the discrete scheme when $\Delta t \equiv \eta=1$. However, it is still possible to prove the convergence of the discrete scheme without rate and for a weaker topology using Theorem \ref{thm:convergencetopi} and a compactness argument. 

\begin{corollary}[Convergence of the discrete scheme]\label{coro:continuousimpliesdiscrete}
Let $(\mu_t)_{t\in\N}$ be the sequence of probability laws defined by the recurrence relation \eqref{eq:recmu} (i.e. by the discrete scheme \eqref{eq:discretescheme_appendix} with $\Delta t=1$). Let $\mu_0\in\pdf_0(E)$ be the initial condition and let $\Theta$ satisfy Assumption \ref{assum:monotonicity}. Then it holds that $\mu_t\to\pi$ as $t\to+\infty$ for the weak convergence of probability measures.
\end{corollary}

\begin{proof}
The sequence $(\mu_t)_{t\in\N}$ is tight because $E$ is compact so it admits a converging subsequence. The limit of any converging subsequence is a fixed point of the operator~$\mathcal{T}$. Since the convergence result stated in Theorem \ref{thm:convergencetopi} does not depend on the initial condition in $\pdf_0(E)$, it implies that $\pi$ is the unique fixed point of the operator $\mathcal{T}$ in $\pdf_0(E)$ and therefore, all the converging subsequences of $(\mu_t)_t$ converges towards $\pi$, which implies the convergence of the whole sequence. 
\end{proof}

\subsection{The Metropolis-Hastings case}\label{sec:metropolis}
The following theorem revisits the main result of \cite{diaconis2011geometric} regarding the convergence rate of the Metropolis-Hastings algorithm, here formally proved with the entropy techniques introduced in Section \ref{appendix:convergencentropy} and in the continuous-time setting for simplicity. 

\begin{theorem}[Formal]\label{thm:convergencemetropolis} Let us consider the linear case outlined in Section \ref{sec:pmh} with $q(y|x)=K_\sigma(x-y)$, where $K_\sigma$ is a fixed symmetric random walk kernel of size $\sigma>0$ (typically a Gaussian kernel with standard deviation $\sigma$). Assume that, as $\sigma\to0$, the random-walk kernel $K_\sigma$ satisfies for any smooth function $\varphi\in~C^\infty_0(\R^d)$: 
\begin{equation}\label{eq:taylor}\int_E\varphi(x)K_\sigma(x)\dd x = \varphi(0)+\frac{1}{2}\sigma^2\Delta\varphi(0) +o(\sigma^2).\end{equation}
Assume that $\pi$ and $E$ are such that the following Poincar\'{e} inequality holds:
\begin{equation}\label{eq:poincare}
    \int_E u(x)^2\pi(x) \dd x-\left(\int_E u(x)\pi(x)\dd x\right)^2\leq \frac{1}{\lambda_P}\int_E |\nabla u|^2(x)\pi(x)\dd x
\end{equation}
for all functions $u$ in the weighted Sobolev space $H^1_\pi(E)$ and for a constant $\lambda_P>0$.
Then, as $\sigma\to 0$, it holds that: 
\[\|f_t-\pi\|_{\mathrm{TV}}\leq C_0 \e^{-\frac{1}{2}\sigma^2(\lambda_P+o(1)) t},\]
for a constant $C_0>0$ which depends only on $f_0$ and $\pi$. 
\end{theorem}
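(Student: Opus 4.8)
The plan is to run the four-step entropy scheme of Section~\ref{sec:convergence} with the quadratic choice $\phi(s)=\tfrac12|s-1|^2$, but to replace the crude lower bound on the jump rate used in the proof of Theorem~\ref{thm:convergencetopi} by a sharp small-$\sigma$ expansion of the dissipation, and to close the estimate with the Poincar\'e inequality \eqref{eq:poincare} rather than with the elementary bound $\mathcal{D}_\phi\ge\lambda\mathcal{H}_\phi$. First I would use that in the linear case $\Theta_\mu(\dd y|x)=K_\sigma(x-y)\dd y$ the kernel is symmetric, so $\alpha_f(x,y)=\pi(y)/\pi(x)$ and $W_f(x\to y)=K_\sigma(x-y)h(\pi(y)/\pi(x))$. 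Writing $u_t:=f_t/\pi$, the entropy--dissipation identity \eqref{eq:entropydissipation} becomes
\[\mathcal{D}_\phi(f_t|\pi)=\frac12\iint_{E\times E}K_\sigma(x-y)\,h\!\left(\frac{\pi(y)}{\pi(x)}\right)\pi(x)\,|u_t(x)-u_t(y)|^2\,\dd x\,\dd y.\]

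The heart of the proof is then a Laplace-type expansion as $\sigma\to0$. Since $K_\sigma$ concentrates at the origin I would Taylor-expand $u_t(y)=u_t(x)+\nabla u_t(x)\cdot(y-x)+O(|y-x|^2)$, use $h(\pi(y)/\pi(x))=h(1)+O(|y-x|)$ and the smoothness of $\pi$, and read off from \eqref{eq:taylor}, applied to the quadratic test functions $z\mapsto z_iz_j$, the moment expansion $\int_{\R^d}K_\sigma(z)z_iz_j\,\dd z=\sigma^2\delta_{ij}+o(\sigma^2)$ (symmetry kills the first moment). This gives
\[\mathcal{D}_\phi(f_t|\pi)=\frac{h(1)}{2}\,\sigma^2\int_E\pi(x)\,|\nabla u_t(x)|^2\,\dd x+o(\sigma^2).\]
Next I would apply \eqref{eq:poincare} to $u=u_t$: since $f_t$ and $\pi$ are probability densities, $\int_E u_t\,\pi=1$, and expanding the square yields $\int_E u_t^2\,\pi-(\int_E u_t\,\pi)^2=\int_E|u_t-1|^2\pi=2\mathcal{H}_\phi[f_t|\pi]$, whence $\int_E|\nabla u_t|^2\pi\ge 2\lambda_P\mathcal{H}_\phi[f_t|\pi]$. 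Combining (with $h(1)=1$ for the canonical $h(u)=\min(1,u)$, or carrying $h(1)$ otherwise) gives $-\tfrac{\dd}{\dd t}\mathcal{H}_\phi[f_t|\pi]=\mathcal{D}_\phi(f_t|\pi)\ge\sigma^2(\lambda_P+o(1))\mathcal{H}_\phi[f_t|\pi]$; Gronwall's lemma then gives $\mathcal{H}_\phi[f_t|\pi]\le\mathcal{H}_\phi[f_0|\pi]\,\e^{-\sigma^2(\lambda_P+o(1))t}$, and the Csisz\'ar--Kullback--Pinsker inequality $\|f_t-\pi\|_{\mathrm{TV}}\le\sqrt{2\mathcal{H}_\phi[f_t|\pi]}$ (exactly as used for Theorem~\ref{thm:convergencetopi}) concludes, with $C_0=\sqrt{2\mathcal{H}_\phi[f_0|\pi]}$ and rate $\tfrac12\sigma^2(\lambda_P+o(1))$.

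The main obstacle, and the reason the statement is labelled ``formal'', is to make the expansion of $\mathcal{D}_\phi$ rigorous and, crucially, uniform in $t$. Three points need care: (i) \eqref{eq:taylor} is stated for $C^\infty_0(\R^d)$ test functions, while $u_t=f_t/\pi$ is only as regular as the solution of \eqref{eq:edp}, so one needs a propagation-of-regularity estimate for \eqref{eq:edp} that is uniform in time --- Lemma~\ref{lemma:infsup} controls $u_t$ and $1/u_t$ in $L^\infty$ but not their derivatives; (ii) $E$ is compact with a boundary, so the inner integral runs over $E$ rather than $\R^d$, and for $x$ near $\partial E$ the truncated kernel has a distorted covariance, so one must show that the boundary layer contributes only at order $o(\sigma^2)$ (or impose geometric assumptions on $E$ and a Neumann-type structure); (iii) the error terms $O(|y-x|^2)$ and $O(|y-x|)$ above must be controlled uniformly in $x$ and in $f_t$. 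A fully rigorous argument would thus require two-sided bounds on $\mathcal{D}_\phi$ with explicit, $t$-uniform remainders; following the authors, I would content myself with the formal computation sketched here.
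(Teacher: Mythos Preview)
Your proposal is correct and follows essentially the same route as the paper's own proof: expand the dissipation \eqref{eq:entropydissipation} to leading order in $\sigma$ to obtain the weighted Dirichlet form $\tfrac{\sigma^2}{2}\int_E\pi|\nabla(f_t/\pi)|^2$, apply the Poincar\'e inequality \eqref{eq:poincare} with $u=f_t/\pi$, close via Gronwall, and finish with Cauchy--Schwarz on the TV norm. Your derivation is in fact slightly more explicit than the paper's (you track the factor $h(1)$ and spell out the moment identity coming from \eqref{eq:taylor}), and your discussion of the obstacles to rigour matches the paper's disclaimer that the argument is only formal.
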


\begin{proof} The formal Taylor expansion \eqref{eq:taylor} as $\sigma\to0$ applied to \eqref{eq:entropydissipation} leads to
\begin{equation}\label{eq:dissipationmetropolis}\mathcal{D}_\phi[f_t|\pi]=\frac{\sigma^2}{2}\int_{E}\pi(x)\phi''\left(\frac{f_t(x)}{\pi(x)}\right)\left|\nabla_x\left(\frac{f_t}{\pi}\right)\right|^2\dd x+o(\sigma^2).\end{equation}
Taking $\phi(s)=\frac{1}{2}(s-1)^2$ gives: 
\[\mathcal{D}_\phi[f_t|\pi]=\frac{\sigma^2}{2}\int_E \pi\left|\nabla\left(\frac{f_t}{\pi}\right)\right|^2\dd x+o(\sigma^2).\]
Using the Poincar\'{e} inequality with the function $u=f_t/\pi$, we obtain: 
\[\mathcal{D}_\phi[f_t|\pi]\geq \frac{\sigma^2\lambda_P}{2}\left(\int_E \left(\frac{f_t}{\pi}\right)^2\pi\dd x-1\right)+o(\sigma^2) = \sigma^2\lambda_P \mathcal{H}_\phi[f_t|\pi]+o(\sigma^2).\]
From the entropy-dissipation relation \eqref{eq:entropydissipation} and Gronwall lemma, we deduce that
\[\mathcal{H}_\phi[f_t|\pi]\leq C_0 \e^{-\sigma^2(\lambda_P+o(1)) t}.\]
The conclusion follows from the Cauchy-Schwarz inequality as in the proof of Theorem~\ref{thm:convergencetopi}.
\end{proof}

A similar result has been obtained rigorously in \cite{diaconis2011geometric} using linear spectral theory. In particular, the fact that a Poincar\'{e} inequality holds depends on the regularity of the boundary of $E$. In our case, the argument could be made rigorous by studying in details the wellposedness of \eqref{eq:nonlinearpde_appendix} in Sobolev spaces. The argument could also lead to a more detailed analysis of the convergence rate of the Metropolis-Hastings algorithm in other metrics. In particular, the same formal argument holds when we take $\phi(s)=s\log(s)-s+1$ in \eqref{eq:dissipationmetropolis}. In this case, the relative entropy is the Kullback-Leibler divergence and its time derivative is controlled by the following dissipation: 
\[\mathcal{D}_\phi[f_t|\pi]=\frac{\sigma^2}{2}\int_E \pi\left|\nabla\sqrt{\frac{f_t}{\pi}}\right|^2\dd x+o(\sigma^2).\]
In order to apply Gronwall lemma and obtain the exponential decay of the Kullback-Leibler divergence, we need the following convex Sobolev inequality: 
\[\int_E u(x)^2\log\left(\frac{u(x)^2}{\|u\|_{L^2_\pi}^2}\right)\pi(x)\dd x\leq \frac{1}{\lambda_S}\int_E |\nabla u(x)|^2\pi(x)\dd x,\]
for all $u\in H^1_\pi$ and for a constant $\lambda_S>0$. The conclusion follows by applying this inequality to $u=\sqrt{f_t/\pi}$ and as before from Gronwall lemma and from the classical Csisz\'{a}r-Kullback-Pinsker inequality \cite[Theorem A.2]{jungel2016entropy} which shows that the Total Variation norm is controlled by the Kullback-Leibler divergence. When the above convex Sobolev inequilty holds, then the Poincar\'{e} inequality \eqref{eq:poincare} also holds with $\lambda_P\geq2\lambda_S$. More details on convex Sobolev inequalities can be found in \cite{bartier2006convex} or\cite[Section 2.2]{jungel2016entropy}. Their application to the rigorous computation of (optimal) convergence rates for the Metropolis-Hastings algorithm is left for future work.

\section{Related Works}\label{sec:related}

\subsection{Another Nonlinear MCMC sampler}\label{sec:anothernonlinear}

A nonlinear kernel which does not fit into the ``collective proposal'' category has been introduced in \cite{andrieu2011nonlinear} and is defined by: 
\[K_\mu(x,\dd y)=(1-\varepsilon)K^{MH}(x,\dd y)+\varepsilon Q_\mu(x,\dd y),\]
where $K^{MH}$ is the Metropolis-Hastings kernel and 
\[Q_\mu(x,\dd y) = \left(1-\int_E\alpha(x,u)\mu(\dd u)\right)\delta_x(\dd y)+\alpha_\eta(x,y)\mu(\dd y).\]
The function $\alpha$ is defined by: $\alpha_\eta(x,y) = \eta(x)\pi(y)/(\eta(y)\pi(x))$,
that is, $\alpha_\eta(x,y)$ is the Metropolis ratio associated to \textbf{another} distribution $\eta\in\pdf_0(E)$. In \cite{andrieu2011nonlinear}, the authors investigated the case $\eta=\pi^{\tilde{\alpha}}$ for $\tilde{\alpha}\in(0,1)$. This kernel satisfies:
\[\iint_{E\times E} \phi(y) K_\eta(x,\dd y)\pi(\dd x)=\int_E \phi(y)\pi(\dd y).\]
The sampling procedure is therefore quite different as it requires an auxiliary chain to build samples from $\eta$ first in order to construct a sample from the desired nonlinear kernel. More precisely, the authors propose the following iterative procedure to construct a couple of Markov chains $(X_t,Y_t)$: 
\[(X_{t+1},Y_{t+1}) \sim \Big((1-\varepsilon)K^{MH}(X_t,\dd x_{t+1})+\varepsilon Q_{\hat{\mu}_t^Y}(X_t,\dd x_{t+1})\Big)P(Y_t,\dd y_{t+1}),\]
where $P$ is a (linear) Markov transition kernel with invariant distribution $\eta$ and $(Y_t)_t$ is a Markov chain with transition kernel $P$. The empirical measure of this chain is denoted by:
\[\hat{\mu}^Y_t = \frac{1}{t+1}\sum_{s=0}^t \delta_{Y_s}.\]
The final MCMC approximation of an observable $\varphi$ is given in this case by:
\begin{equation}\label{eq:timeaverage}\int_E \varphi(x)\pi(\dd x) \simeq \frac{1}{t+1}\sum_{s=0}^t \varphi(X_s).\end{equation}
In this empirical sum, the successive iterations of the single chain $(X_t)_t$ are used. In the collective proposal framework introduced in Section \ref{sec:framework}, the algorithm produces $N$ (asymptotically) independent copies of a nonlinear chain $(X^i_t)_t$, $i\in\{1,\ldots,N\}$ and we have at our disposal a \emph{sequence} of MCMC approximations of the form: 
\begin{equation}\label{eq:ensembleaverage}\int_E \varphi(x)\pi(\dd x) \simeq \frac{1}{N}\sum_{i=1}^N \varphi(X_t^i),\end{equation}
as $t\to+\infty$. We can therefore interpret the sum \eqref{eq:timeaverage} as a \emph{time average} and the sum \eqref{eq:ensembleaverage} as an \emph{ensemble average}. 

\subsection{Links with Importance Sampling Based Methods}\label{sec:linksis}

Even though CMC does not use importance weights, it shares some similarities with importance sampling methods, in particular SMC \cite{del2006sequential} and PMC methods \cite{cappe2004population}. We will discuss the links with SMC in Section \ref{sec:smc} and focus here on PMC.

According to \cite{cappe2004population}, in PMC methods, without the importance correction, a regular acceptance step --- as in Metropolis-Hastings --- for each mutation would lead to a simple parallel implementation of $N$ Metropolis-Hastings algorithm. Under the same parallel, we can compare the mutation step in PMC with the proposal step of CMC.

In the first implementation of PMC, at each mutation step, each particle $X_t^i$ is updated independently from the others, according to a kernel $q_{it}( \cdot)$ (that can depend on $t$ and $i$), the new particle $X_{t+1}^i$ is then associated with a weight proportional to the ratio $\pi(X_{t+1}^{i})/q_{it}( X_{t+1}^{i} )$. In PMC, a mutation therefore occurs according to $q_{it}$, that is only depends on the position of the ancestor particle. In CMC, the mutation occurs according to $\Theta_{\hat{\mu}^N_t}( \cdot \mid X_t^i)$, the update thus depends on the position of all the particles as $\Theta$ depends on the empirical measure of the system $\hat{\mu}^N_t$. This additional dependency is particularly emphasized in the case of Algorithm \ref{algo:coker}. This corresponds to the Rao-Blackwellised version of PMC, in which we integrate over the position of all the particles.

Recently, \cite{delyon2019adaptive}
also proposed to Rao-Blackwellise the mutation kernel in PMC while keeping an importance sampling framework. The resulting algorithm is non-Markovian and does not conserve the number of particles. At each iteration a batch of particles is added to the system according to the previous estimation of the target density. The number of particles $N$ grows with the number of iterations. Once a particle is added, its position does not change, only its weight is updated along the iterations as explained in the following Algorithm \ref{algo:sais}. 

\begin{algorithm}[ht]
    \SetAlgoLined
    \KwIn{A target distributions $\pi$, \\
    an initial distribution $q_0$, \\
    a number of particles $N$,\\
    a sequence of transition kernels $K_{t}$\\
    a sequence of mixing weights $(\lambda_t)_t$\\
    a number of iterations $T$}\vspace{.1cm}
    \KwOut{A weighted sample $\big(X^i_t, w^i_t\big)_{1 \leq i \leq N ; \ 0 \leq t \leq T}$}\vspace{.1cm}
    
    Sample independently $N$ particles $X^i_0 \sim q_0$, $i\in\{1,\ldots,N\}$ \; 
    Compute the importance weights $w^i_0 = \pi(X^i_0)/q_0(X^i_0)$ \;

    \For{$t=1$ \textup{\textbf{to}} $T$}{\vspace{.05cm}
    
    Sample independently $N$ particles from $X^i_t \sim q_t := (1-\lambda_t)f_t + \lambda_t q_0$, $i\in\{1,\ldots,N\}$ where 
    \[f_t(x) = \sum_{s=0}^{t-1}\sum_{i=1}^N \tilde{w}^i_s K_t(x - X^i_s), \]
    and the weights $\tilde{w}^i_s = w^i_s/\sum_{s=0}^{t-1}\sum_{i=1}^N w^i_s$ are normalized. 
    
    Compute the new importance weights $w^i_t = \pi(X^i_t)/q_t(X^i_t)$ ;
    
    }
\caption{Safe and Adaptive Importance Sampling (SAIS)}
\label{algo:sais}
\end{algorithm}

The main hyper parameters to tune are the sequence of kernels $(K_t)_t$ and mixing weights $(\lambda_t)_t$. Typically the sequence of kernels is taken equal to $K_{t}(x)\equiv K_{h_t}(x) := K_0(x/h_t)/h_t^d$ where $K_0$ is a fixed kernel and $(h_t)_t$ is a sequence of bandwidths. In \cite{delyon2019adaptive}, the authors recommend to use
\begin{align*}
    h_t &= h_0 \left(1+\frac{2Nt}{T_0}\right)^{-1/(4+d)},\\
    \lambda_t &= \left\{ 
    \begin{array}{rcl}
    0.25 \left(1+\frac{2Nt}{T_0}\right)^{-1/(4+d)} & \text{if} & t\geq T_0, \\
    0.5 & \text{if} & T_0/2<t<T_0, \\ 
    1 & \text{if} & t\leq T_0/2.
    \end{array}
    \right.
\end{align*}
where $T_0 \sim T/10$ is the duration of the burn-in phase and $h_0$ is the initial bandwidth size which becomes the only parameter to choose. During the burn-in phase the authors also recommend to raise the importance weights to the power $3/4$ in order to prevent an early degeneracy of a small number of weights. We will always use these recommendations. 

% A particle $X_{N+1}$ is added according to $q_N = \sum_i w_i K_N (\cdot \mid X_i) $, where $K_N$ is a kernel whose bandwidth typically decreases with $N$ and where $(w_i)$ is a vector of normalised importance weights. The weight of $X_{N+1}$ is initially proportional to the standard importance weight associated to $q_N$ and is then updated at each iteration by re-normalisation as new particles are added to the system. The weight of the added particle depends on all the other particles already present in the system from the first iteration.
This method shares some similarities with ours, an important difference being that an old particle cannot be improved through time and a ``bad'' particle will indefinitely remain in the system at the same place, while its weight decreases through time, eventually increasing the computation cost. A second difference is that it is necessary to tune the sequence of bandwidths and mixture weights. 
\medskip 

Importance sampling based methods output unbiased estimators. For CMC, as stated before, except for the Metropolis-Hastings proposal, each one of the methods previously described is biased. Indeed, for a fixed number $N$ of particles, the algorithm does not converge to the target distribution. For a large number of particles, the algorithm provides however a good approximation of the target density, according to Theorem \ref{thm:convergencecmc}. 
In addition, as a byproduct, we can re-use this approximation to provide an \emph{unbiased} estimator by simply using the (sequence of) collective proposal distributions as importance distributions in any importance sampling based sampler. A more thorough study is left for future work, but some elements can be found in Appendix \ref{sec:cmcis}.

\subsection{Adaptive Sequential Monte Carlo methods}\label{sec:smc}

A classical version of the Sequential Monte Carlo method well suited for sampling problems is given by Algorithm \ref{algo:smc}. 

\begin{algorithm}[ht]
    \SetAlgoLined
    \KwIn{A sequence of target distributions $\pi_0,\ldots,\pi_T$, \\
    a number of particles $N$,\\
    a sequence of transition kernels $K_{t,\xi}$ parametrized by $\xi\in E^N$, \\
    a threshold $ESS_\mathrm{min}$, \\ 
    a given allowed number $S$ of mutation steps}\vspace{.1cm}
    \KwOut{A weighted sample $\big(X^i_t, w^i_t\big)_{1 \leq i \leq N ; \ 1 \leq t \leq T}$}\vspace{.1cm}
    \textit{\textbf{(Initialization)}}\;\vspace{.1cm}
    \For{$i=1$ \textup{\textbf{to}} $N$}{\vspace{.05cm}
        Sample $X^i_0\sim \pi_0$. \;
        
        Set the weights $w^i_0 = 1/N$ \;}

    \For {$t=1$ \textup{\textbf{to}} $T$}{\vspace{.1cm}
    
    \textit{\textbf{(Correction)} \;}\vspace{.1cm}
    
    Update the weights $w^i_{t} =  w^i_{t-1}\frac{\pi_{t}(X^i_{t-1})}{\pi_{t-1}(X^i_{t-1})}$ for $i\in\{1,\ldots,N\}$\;
    Normalize the weights \; \vspace{.1cm}
    
    \textit{\textbf{(Resampling)} \;}\vspace{.1cm}
    
    Compute $ESS = (\sum_i (w^i_{t-1})^2 )^{-1}$ \;
    
    \If{ $ESS< ESS_\mathrm{min}$}{\vspace{.05cm}
    Resample the particles $X^i_{t-1} \sim \sum_j w^j_{t} \delta_{X^j_{t-1}}$ for all $i\in\{1,\ldots,N\}$\;
    
    Set $w^i_{t} = 1/N$ for all $i\in\{1,\ldots,N\}$
    }
    
    \vspace{.1cm}
    
    \textit{\textbf{(Mutation)} \;}\vspace{.1cm}
    
    Set $\mathcal{X}^N_{t-1} = (X^1_{t-1},\ldots, X^N_{t-1}) \in E^N$ \;
    \For{$i=1$ \textup{\textbf{to}} $N$}{\vspace{.05cm}
        Set $X^i_{0,t} = X^i_{t-1}$ \;
        \For{$s=1$ \textup{\textbf{to}} $S$}{\vspace{.05cm}
            Sample $X^i_{s,t}\sim K_{t,\mathcal{X}^N_{t-1}}(X^i_{s-1,t},\dd x)$ \;
        }
        Set $X^i_{t} = X^i_{S,t}$  \;

    }}
    \vspace*{.1cm}
\caption{Sequential Monte Carlo (SMC)}
\label{algo:smc}
\end{algorithm}

In this version, the sequence of target distributions is typically given by a tempering scheme between $\pi_0$, which is easy to sample from, and the final distribution $\pi_T = \pi$, the target distribution that we wish to sample from. The SMC algorithm alternates between two steps. During the mutation step, the particles evolve independently using a $\pi_t$-invariant random-walk kernel. During this step, the weights of the particles are updated using a classical importance sampling procedure. Then, during the resampling step, the particles with a small weight are eliminated and replaced by particles with a higher weight. The SMC algorithm is said to be adaptive when the transition kernel $K_{t,\mathcal{X}^n_{t-1}}$ at iteration $t$ depends on the population of particles at the previous iteration. 

This framework is general enough to encompass our main Algorithm \ref{algo:cmc}, by simply taking un-weighted particles (or $ESS_\mathrm{min}=N$), a constant sequence of target distributions and $K_{t,\mathcal{X}^N_{t-1}}$ equal to \eqref{eq:cmckernel} with $\mu$ equal to the empirical measure of the particle system. However our setting has several differences with the classical SMC methods advocated in the literature. 

In all the adaptive methods proposed in the literature and that we are aware of, the choice of the random-walk kernel does not depend on the whole distribution of the particles but rather on summary statistics, that is on real-valued quantities of the form $\langle \Psi_t, \hat{\mu}^N_t\rangle$ for a well-chosen test function $\Psi_t$. A typical choice is to consider the covariance. Considering only summary statistics has several advantages, in particular the algorithm keeps a linear footprint in $N$ and can be easily parallelized. Then, the mutation step is typically a standard MH algorithm whose flaws are expectedly corrected by the resampling procedure. However, the full knowledge of the particle swarm is not exploited and choosing the right summary statistics and the right mutation kernel are difficult tasks which may cause instability in practice. One goal in this article is precisely to avoid resampling and tempering by considering better mutation kernels thanks to wish we can safely rely on ergodicity only. Our main algorithm can be seen as the most general case where the mutation kernel depends on the whole distribution of the particle system and thus fully exploit the knowledge of the particle swarm. Even if the complexity of the algorithms that we propose becomes quadratic in $N$, we show that at least equally good results can be achieved.

On the theoretical side, this choice of mutation kernel is much more difficult to study and requires specific tools. On the contrary, due to their linear nature, the convergence properties of adaptive SMC methods with a mutation kernel which depends on summary statistics are typically studied with completely different tools. One important difference to note is that, unlike most of the particle-based algorithms in the literature, the target distribution $\pi^{\otimes N}$ is \emph{not} an invariant distribution of the particle system. This issue, which may seem critical at first sight, is actually solved asymptotically when $N\to+\infty$ thanks to a propagation of chaos result. This enlarge the class of mutation kernels.

Finally, we mention that although we did not investigate this direction, CMC within SMC would be a very natural idea.  

\section{Importance Sampling plug-in of CMC}\label{sec:cmcis}

\label{appendix:CMCIS}

Importance sampling methods are unbiased, by definition,  while CMC presents a (usually small) bias depending on the number of particles. To remove this bias, while keeping the original form of the algorithm, we present in Algorithm~\ref{algo:cmcIS} a small plug in, which output an unbiased sample without additional cost.

\begin{center}
\begin{minipage}{1\linewidth}
\centering
\begin{algorithm}[H]
    \SetAlgoLined
    \KwIn{An initial population of particles $(X^1_0,\ldots,X^N_0)\in E^N$, a maximum time $T\in\N$, a proposal distribution $\Theta$ and an acceptance function $h$}
    \KwOut{$T$ estimators $\hat{\varphi}_t$ of $E_\pi[\varphi(X)]$}

    \For {$t=0$ to $T-1$}{
    \For{$i=1$ to $N$}{

    Draw $Y^i_t\sim \Theta_{\hat{\mu}_t^N}(\cdot|X^i_t)$ a proposal for the new state of particle $i$\;
    
        Compute $\alpha_{\hat{\mu}^N_t}(X^i_t,Y^i_t) = \frac{\Theta_{\hat{\mu}^N_t}(X^i_t|Y^i_t)\pi(Y^i_t)}{\Theta_{\hat{\mu}^N_t}(Y^i_t|X^i_t)\pi(X^i_t)}$\;
        Store $W_t^i = \pi(Y^i_t)/\Theta_{\hat{\mu}^N_t}(Y^i_t|X^i_t) $ and $Y_t^i$\;

        Draw $U^i_t\sim\mathcal{U}([0,1])$\;
        
        \eIf{$U^i_t\leq h\big(\alpha_{\hat{\mu}^N_t}(X^i_t,Y^i_t)\big)$}{
            
            Set $X^i_{t+1}=Y^i_t$\;
            }{
            Set $X^i_{t+1}=X^i_t$ \;
        }
    }}
    Set $\hat\varphi_t = \sum_{i} W_t^i\varphi(Y_t^i). $
\caption{Collective Monte Carlo with IS output}
\label{algo:cmcIS}
\end{algorithm}
\end{minipage}
\end{center}

At each step, we can store $Y_t^i$ the proposed state for each particle, and the numerator of the acceptance ratio $W_t^i = \pi(Y^i_t)/\Theta_{\hat{\mu}^N_t}(Y^i_t|X^i_t) $. An estimator of $\E_\pi[\varphi(X)] $ is then $\sum_i W_t^i \varphi(Y_t^i) $. This term corresponds exactly to the importance ratio with $\pi$ as target and $\Theta_{\hat{\mu}^N_t}$ as proposal distribution. This is similar in a sense to PMC, but the main difference is that \emph{we do not propagate $Y_t^i$} unless it is accepted, and thus the weight cannot degenerate.

This addition does not interfere with the choice of the interaction $\Theta$, and ensures that the resulting estimator is unbiased. We do not have to use all the points $(Y_t^i)$ as we can only use some of the $t$. Notice that each of the $(Y^i_t)^{}_i$ are independent by construction, but that $(Y^i_t)^{}_{t,i}$ are not independent.

To measure the quality of a numerical method, it is common to use the Effective Sample Size (ESS) \cite{robert2013monte} that represents the equivalent number of i.i.d. samples needed to build an estimator of $\E_\pi[X]$ with the same variance --- notice that this quantity is mostly informative. In Importance sampling, the ESS is commonly estimated by $ESS(t) = (\sum_i (W_t^i)^2)^{-1}$, Interestingly enough, Theorem \ref{thm:convergencecmc} shows that precisely, as the number of particles increases,
\[\E[(\sum_i (W_t^i)^2)^{-1}]/N \rightarrow_{N \to \infty} 1,\]
as it tends to an i.i.d. sample distributed according to $\pi$. The precise convergence speed would depend on the constants appearing in the theorems. 

As usually in importance sampling, this allows to estimate at no cost the normalizing constant --- or in a Bayesian framework, the marginal likelihood of a mode : $\int_x \pi(x) \mathrm{d}x$, where we recall that $\pi$ is non-normalised. Classical Metropolis-Hastings methods cannot estimate this quantity, while importance sampling methods (SMC, PMC) can provide an estimator whose variance may be too large to be usable. Figure \ref{fig:normalizingconstant} shows the result for the experiments of Section \ref{sec:experiments}. These are only results on two ``simple'' targets, further numerical simulations would be needed to confirm the efficiency of our method to estimate this quantity.

\begin{figure}[ht]
    \centering
    \includegraphics[scale=.25]{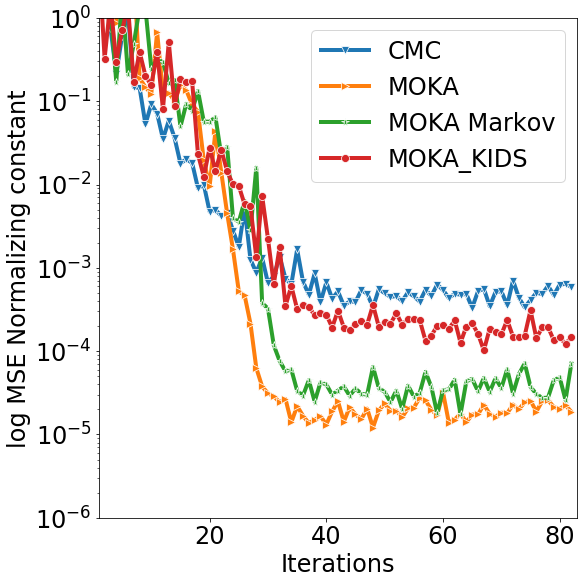}
    \includegraphics[scale=.25]{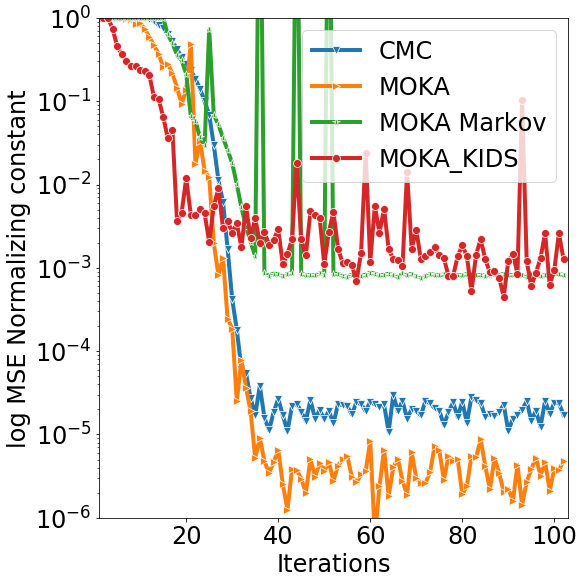}
    \caption{Log-MSE of the normalizing constant computed for the banana-shaped distribution (left) and the 8-dimensional Gaussian mixture (right).}
    \label{fig:normalizingconstant}
\end{figure}

\section{Numerical experiments on a Cauchy mixture}\label{appendix:Cauchy}

\begin{figure}[ht]
    \centering
    \includegraphics[scale=.3]{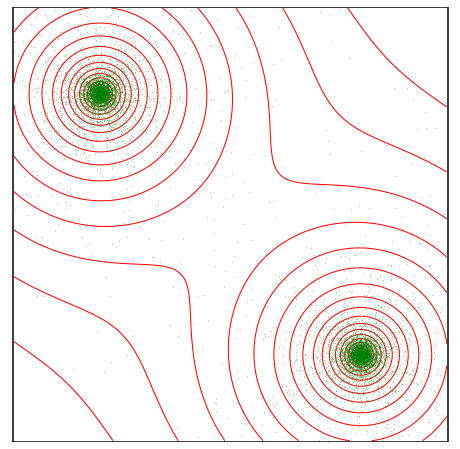}
    \caption{True sample and levels of the mixture of Cauchy target distribution}
    \label{fig:cauchytrue}
\end{figure}

Our last target is a simple mixture of two two-dimensional Cauchy distributions with means $(0.2,0.8)$ and $(0.8,0.2)$ and the same scale parameter $0.01$. We represent in Figure~\ref{fig:cauchytrue} a sample and the level sets of the target distribution. This distribution has ``heavy tails'' which should reduce the efficiency of our uniform proposals on balls. For this example, we choose $0.1$ for the radius of the unique proposals, and $0.01, 0.05, 0.1,0.3$ for the MoKAs. The results presented in Figure \ref{fig:CauchyED} show that none of the method reaches the minimal value possible for the Energy distance. Furthermore, MoKA-KIDS dramatically fails, this is probably because of the underlying assumptions of the deconvolution algorithm we use. This last example confirms the reliability of MoKA-Markov as an efficient and secure method. The variance of the energy distance is given in Tabular \ref{tab:tablevarED_cauchy}.

\begin{figure}[ht]
    \centering
    \includegraphics[scale=.4]{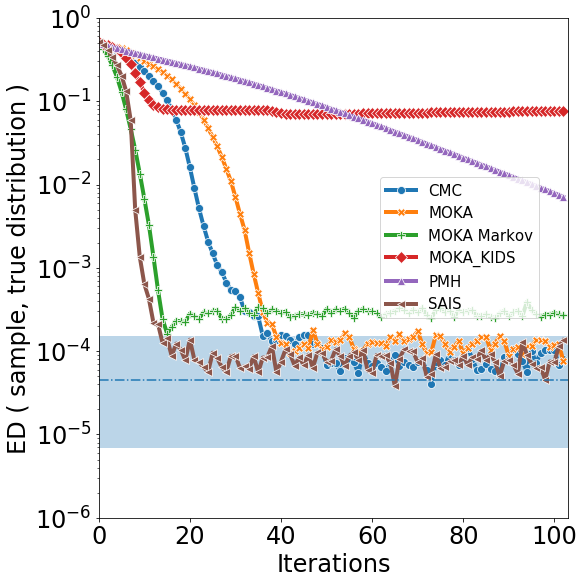}
    \caption{Mean energy distance to a true sample on 10 repetitions of the algorithm, Cauchy mixture. The dotted line represents the mean distance between two iid exact samples, computed over $100$ independent realisations, and the coloured area is the corresponding $90\%$ prediction interval.}
    \label{fig:CauchyED}
\end{figure}

\begin{table}[ht]
    \centering
\begin{tabular}{ccccccc}
& CMC & MoKA & MoKA-Markov & MoKA-KIDS & PMH & SAIS \\
Cauchy mixture & 8.01e-05 & \textbf{6.59e-05} & 6.86e-05 & 7.40e-02 & 3.31e-4 & 8.66e-05 \\
\end{tabular}
    \caption{Variance of the Energy distance at the last iteration for the Cauchy target.}
    \label{tab:tablevarED_cauchy}
\end{table}

\end{appendix}

\end{document}